\newcommand{\filename}{Gen-DN-ArXiv-2017-08-20v3.tex}
\newcommand{\comment}[1]{}
\newcommand{\rd}{\color{red}}
\newtheorem{lemma}{LEMMA}[section]
\newtheorem{theorem}[lemma]{THEOREM}
\newtheorem{definition}[lemma]{DEFINITION}
\newtheorem{corollary}[lemma]{COROLLARY}
\newtheorem{remark}[lemma]{REMARK}
\def\supp{\mathop{\mbox{\rm supp}}\nolimits}
 \newcommand{\nc}{\newcommand}
 \nc{\ha}{\frac{1}{2}}
 \nc{\tha}{\frac{3}{2}}
 \nc{\s}{\widetilde}
 \nc{\dst}{\displaystyle}
 \nc{\gm}{\gamma}
 \nc{\ga}{\Gamma}
 \nc{\ka}{\kappa}
 \nc{\eps}{\varepsilon}
 \nc{\vep}{\varepsilon}
 \nc{\hi}{\varphi}
 \nc{\vfi}{\varphi}
 \nc{\oa}{\Omega}
 \nc{\Om}{\Omega}
 \nc{\om}{\omega}
 \nc{\ov}{\overline}
 \nc{\lon}{\longrightarrow}
 \nc{\scr}{\scriptstyle}
 \nc{\ex}{\exists}
 \nc{\fo}{\forall}
 \nc{\pa}{\partial}
 \nc{\pO}{{\partial\Omega}}
 \nc{\und}{\underline}
 \nc{\ze}{\zeta}
 \nc{\si}{\sigma}
 \nc{\tri}{\triangle}
 \nc{\al}{\alpha}
 \nc{\bt}{\beta}
 \nc{\lf}{\left}
 \nc{\ri}{\right}
 \nc{\lm}{\lambda}
 \nc{\lam}{\lambda}
 \nc{\dt}{\delta}
 \nc{\de}{\delta}
 \nc{\te}{\theta}
 \nc{\tl}{\tilde}
 \nc{\wt}{\widetilde}
 \nc{\p}{\prime}
 \nc{\m}{\mu}
 \nc{\R}{{\mathbb R}}
 \nc{\B}{{\Bbb B}}
 \nc{\N}{{\Bbb N}}
 \nc{\C}{{\Bbb C}}
\newcommand{\E}{{\cal E}}
\newcommand{\F}{{\cal F}}
\renewcommand{\L}{{\cal L}}
\renewcommand{\P}{{\cal P}}
\newcommand{\U}{{\cal U}}
\newcommand{\V}{{\cal V}}
\newcommand{\W}{{\cal W}}
\newcommand{\Wp}{{\W\,^\prime}}
\newcommand{\be}{\begin{equation}}
\newcommand{\ee}{\end{equation}}
\newcommand{\bes}{\begin{equation*}}
\newcommand{\ees}{\end{equation*}}
\newcommand{\bea}{\begin{eqnarray}}
\newcommand{\eea}{\end{eqnarray}}
\newcommand{\beas}{\begin{eqnarray*}}
\newcommand{\eeas}{\end{eqnarray*}}
\def\text#1{\;\;\mbox{#1}\;}
\renewcommand{\@oddhead}{\vbox{\hbox to\textwidth{\scriptsize %
 \filename\hfill S.E.Mikhailov\hfill 
 \hfill\arabic{page}}
 \vspace{0.25ex}
 
 \hrule
 }}
 \renewcommand{\@evenhead}{\vbox{\hbox to\textwidth{\scriptsize %
 \filename\hfill 
 }
  \vspace{0.25ex}
  
\hrule
}}
 \numberwithin{equation}{section}
\begin{document}
\title
{
Analysis of Segregated
Boundary-Domain Integral Equations for
 Variable-Coefficient Dirichlet and Neumann Problems\\ 
 with General Data
}

\author
{S.E. Mikhailov
\footnote{%
e-mail: {\sf sergey.mikhailov@brunel.ac.uk}}\\ 
Department of Mathematics, Brunel University London, UK
     \\
}
 \date{\em \today
 \hfill \filename }

 \maketitle

\maketitle

\begin{abstract}\noindent

Segregated direct
boundary-domain integral equations (BDIEs) based on a  parametrix and
associated with the Dirichlet and Neumann boundary value problems for the linear stationary diffusion
partial differential equation with a variable coefficient are formulated. 
The PDE right hand sides belong to the Sobolev space $H^{-1}(\Omega)$ or $\widetilde H^{-1}(\Omega)$, when neither classical nor canonical co-normal derivatives are well defined.
Equivalence of the BDIEs to the original BVP, BDIE solvability, solution uniqueness/non-uniqueness,
and as well as Fredholm property and invertibility of the BDIE operators  are
analysed in Sobolev (Bessel potential) spaces.
It is shown that the BDIE operators for the Neumann BVP are not invertible, and
appropriate finite-dimensional perturbations are constructed leading to invertibility of the perturbed operators.\\

\noindent{\bf Mathematics Subject Classification (2010).} 35J25, 31B10, 45K05, 45A05.

\noindent{\bf Keywords.} Partial differential equation, variable
coefficients, Sobolev spaces,   parametrix, integral equations, equivalence, invertibility.
\end{abstract}

\section{Introduction}

Many applications in science and engineering can be modeled by
boundary-value problems (BVPs) for partial differential equations with variable coefficients.
Reduction of the BVPs with arbitrarily variable coefficients to explicit
boundary integral equations is usually not possible, since the
fundamental solution necessary for such reduction is generally not
available in an analytical form (except for some special
dependence of the coefficients on coordinates). Using a parametrix
(Levi function) introduced in \cite{Levi1909}, \cite{Hilbert1912}
as a substitute of a fundamental solution, it is possible however
to reduce such a BVP to a system of boundary-domain integral equations, BDIEs, (see
e.g.  \cite[Sect. 18]{Miranda1970}, \cite{Pomp1998a, Pomp1998b}, where the
Dirichlet, Neumann and Robin problems for some PDEs were reduced
to {\em indirect} BDIEs). However, many questions about their
equivalence to the original BVP, solvability, solution uniqueness
and invertibility of corresponding integral operator remained
open for rather long time.

In \cite{CMN-1, CMN-2, MikMMAS2006, CMN-NMPDE-crack, CMN-Ext-AA2013}, the 3D mixed (Dirichlet-Neumann)
boundary value problem (BVP) for the variable-coefficient
stationary diffusion PDE {\em with a square integrable right hand side} was
considered. 
Such equations appear e.g. in electrostatics,
stationary heat transfer and other diffusion problems for
inhomogeneous media. The BVP has been reduced to either segregated
or united direct Boundary-Domain Integral or Integro-Differential
Equations,  some of the which are associated with
those formulated in \cite{MikEABEM2002}. 

For a function from the Sobolev space $H^1(\Omega)$,  a classical co-normal derivative
in the sense of traces may not exist. 
However, when this function satisfies a second order partial differential equation with a right-hand side from $H^{-1}(\Omega)$,  the generalised co-normal derivative can be defined in the weak sense, associated with the first Green identity and an extension of the PDE right hand side to $\widetilde H^{-1}(\Omega)$ (see \cite[Lemma 4.3]{McLean2000}, \cite[Definition 3.1]{MikJMAA2011}).
Since the extension is non-unique, the co-normal derivative appears to be a non-unique operator,
which is also non-linear in $u$ unless a linear relation between
$u$ and the PDE right hand side extension is enforced. This
creates some difficulties in formulating the
boundary-domain integral equations. 

These difficulties are addressed in this paper
presenting formulation and analysis of direct
segregated BDIE systems equivalent to the Dirichlet and Neumann boundary value
problems for the divergent-type PDE with a variable scalar coefficient and a general
right hand side from ${H}^{-1}(\Omega)$ extended when necessary to  $\widetilde{H}^{-1}(\Omega)$. 
This needed a non-trivial generalisation of the third Green identity and its co-normal derivative for such functions,
which essentially extends the approach implemented in \cite{CMN-1, CMN-2, MikMMAS2006, CMN-NMPDE-crack, CMN-Ext-AA2013} for the right hand side from $L_2(\Omega)$. 
Equivalence of the BDIEs to the original BVP, BDIE solvability, solution uniqueness/non-uniqueness,
as well as Fredholm property and invertibility of the BDIE operators  are
analysed in Sobolev (Bessel potential) spaces.
It is shown that the BDIE operators for the Neumann BVP are not invertible, and
appropriate finite-dimensional perturbations are constructed leading to invertibility of the perturbed operators.

Note that our analysis is aimed not at the boundary-value problems, which properties are well-known nowadays, but rather at the BDIE systems per se. The analysis is interesting not only in its own rights but is also to be used further on  for analysis of convergence and stability of BDIE-based numerical methods for PDEs,
 see e.g. \cite{GMR2013, MikEABEM2002, MikNakJEM, MikMoh2012, SSA2000, SSZ2005, Taigbenu1999, ZZA1998, ZZA1999}.

\section{Co-normal derivatives and boundary value problems}
Let $\Omega$ be a bounded open three--dimensional region of
$\R^3$. For simplicity, we assume that the boundary $\pa \Omega$
is a simply connected, closed, infinitely smooth surface. Let
$a\in C^{\infty}(\overline{\Omega})$, $a(x)>0$ for $x\in
\overline{\Omega}$. Let also $\pa_{x_j}:=\pa/\pa{x_j}$
 $(j=1,2,3)$, $\pa_x:=\nabla_x=(\pa_{x_1},\pa_{x_2}, \pa_{x_3})$.

We consider the scalar elliptic differential equation, which for
sufficiently smooth $u$ has the following strong form,
\begin{equation}
\label{2.1} \dst Au(x):=A(x,\pa_x)\,u(x) := \sum\limits_{i=1}^3
\frac{\pa}{\pa x_i}\, \Big( \, a(x)\,\frac{\pa u(x)}{\pa x_i}\,
\Big) = f(x),  \;\;\;\; x \in \Omega,
\end{equation}
where
 $u$ is an unknown
function and $f$ is a given function in $\Omega$.

 In what follows $\mathcal D(\Omega)=C^\infty_{comp}(\Omega)$, $ H^s(\Omega)= H^s_2(\Omega)$, $ H^s(\pO)=H^s_2(\pO)$ are the
Bessel potential spaces, where $s\in \R$ is an arbitrary real
number (see, e.g., \cite{LiMa1}, \cite{McLean2000}). We recall that $H^s$
coincide with the Sobolev--Slobodetski spaces $W^s_2$ for any
non-negative $s$.
We denote by $\s{H}^s (\Omega)$ the subspace of ${H}^s (\R^3)$,
$$
\s{H}^s (\Omega):=\{g:\;g\in H^s  (\R^3),\; \supp \,g
\subset\ov{\Omega}\},
$$
while ${H}^s (\Omega)$ denotes the space of restrictions on
$\Omega$ of distributions  from ${H}^s  (\R^3)$,
$$
{H}^s (\Omega):=\{r_{_{\Omega}}g:\;g\in{H}^s (\R^3)\},
$$
where $r_{_{\Omega}}$ denotes the restriction operator on
$\Omega$. We will also use notation
$g|_{_{\Omega}}:=r_{_{\Omega}}g$.
We denote by ${H}^s_{\pO}$ the following subspace of ${H}^s
(\R^3)$ (and $\s{H}^s (\Omega)$),
\be\label{H_dO}
{H}^s_{\pO}:=\{g:\;g\in H^s  (\R^3),\; \supp \,g \subset{\pO}\}.
\ee


From the trace theorem (see e.g. \cite{LiMa1, DaLi4, McLean2000})
for $u\in H^1(\Omega )$, it follows that
 $\gamma^+\,u \in H^{\ha}(\pO)$, where
$\gamma^+=\gamma^+_{_\pO}$ are the trace operators on $\pO$ from $\Omega $.
Let also $\gamma^{-1}:H^{1/2}(\partial\Omega)\to H^1(\Omega)$ denote a (non-unique) continuous right inverse to the trace operators $\gamma^+$, i.e., $\gamma^+_{\partial\Omega}\gamma^{-1}w=w$ for any $w\in H^{1/2}(\partial\Omega)$, and $(\gamma^{-1})^*:\s{H}^{-1}(\Omega)\to H^{-\ha}(\pO)$ is the continuous operator dual to $\gamma^{-1}:H^{1/2}(\partial\Omega)\to H^1(\Omega)$, i.e., 
$\langle(\gamma^{-1})^*\tilde f,w\rangle_{\Omega}:=\langle\tilde f,\gamma^{-1}w\rangle_{\Omega}$ for any
$\tilde f\in \s{H}^{-1}(\Omega)$ and $w\in H^{1/2}(\partial\Omega)$.

For $u\in H^2(\Omega)$  we can denote by $T^+$ the corresponding classical (strong)
co-normal derivative operator on $\pO$ in the sense of
traces,
\begin{align}\label{Tcl}
\dst T^+u(x) := \sum\limits_{i=1}^3
a(x)\,n_i(x)\gamma^+\frac{\pa u(x)}{\pa x_i}
=a(x)\,\gamma^+\frac{\pa u(x)}{\pa n(x)},
\end{align}
where $n^+(x)$ is the outward (to $\Omega$) unit normal vectors at
the point $x\in \pO$. However the classical co-normal derivative operator is, generally, not well defined if $u\in H^1(\Omega)$ (cf. an example in Section~\ref{Example} in Appendix).

For $u\in H^1(\Omega)$, 
the partial differential operator $A$ is understood in the sense of distributions,
\begin{equation}\label{Ldist}
    \langle Au,v \rangle_\Omega:=-\E(u,v)\quad \forall v\in
    \mathcal D(\Omega),
\end{equation}
where 
 $$
\E(u,v):=\int_{\Omega} a(x) \;\nabla u(x) \cdot\nabla v(x)dx,
 $$
and the duality brackets $\langle \;g,\;\cdot\;\rangle_\Omega $ denote
value of a linear functional (distribution) $g$, extending the
usual $L_2$ dual product.

Since the set $\mathcal D(\Omega)$ is dense in $\s{H}^1
(\Omega)$, the above formula defines a continuous operator $A:
H^1(\Omega)\to H^{-1}(\Omega)=[\s{H}^1 (\Omega)]^*$,
\begin{equation}\label{LH1}
    \langle Au,v \rangle_\Omega:=-\E(u,v),\quad \forall\ u\in H^1(\Omega),\, v\in\s{H}^1(\Omega).
\end{equation}
Let us consider also the different operator, $\check{A}:
H^1(\Omega)\to \s{H}^{-1}(\Omega)=[{H}^1 (\Omega)]^*$,
\begin{multline}\label{Ltil}
    \langle \check{A}u,v \rangle_\Omega:=-\E(u,v)
    =-\int_{\Omega} a(x)\nabla u(x)\cdot\nabla v(x)dx
        =-\int_{\R^3} \mathring E [a\nabla u](x)\cdot\nabla V(x)dx\\
        =\langle \nabla\cdot\mathring E [a\nabla u],V\rangle_{\R^3}
        =\langle \nabla\cdot\mathring E [a\nabla u],v\rangle_\Omega, \quad \forall\, u\in H^1(\Omega),\  v\in
    {H}^1(\Omega),
\end{multline}
which is evidently continuous and can be written as 
\begin{align}\label{checkA}
\check{A}u:=\nabla\cdot\mathring E [a\nabla u].
\end{align}
Here $V\in H^1(\R^3)$ is such that $r_\Omega V=v$ and $\mathring E$ denotes the operator of extension of the functions, defined in $\Omega$, by zero outside $\Omega$ in $\R^3$. 
For any $u\in H^1(\Omega)$, the
functional $\check{A}u$ belongs to $\s{H}^{-1}(\Omega)$ and is an
extension of the functional ${A}u\in {H}^{-1}(\Omega)$, which domain is thus extended from
$\s{H}^1(\Omega)$ to the domain ${H}^1(\Omega)$ for $\check{A}u$.

\comment{The extension is not unique, and any functional of the form
\begin{equation}\label{Ext}
     \check{A}u +g,\quad  g\in
    {H}^{-1}_{\partial\Omega}
\end{equation}
provides another extension. On the other hand, any extension of
${A}u$ from $\s{H}^1(\Omega)$ to ${H}^1(\Omega)$ has evidently
form \eqref{Ext}.
}

Inspired by the first Green identity for smooth functions, we can define {\em the
generalised co--normal derivative} (cf., for
example, \cite[Lemma 4.3]{McLean2000}), \cite[Definition 3.1]{MikJMAA2011}, \cite[Lemma 2.2]{KLW2015ZAMP}).
\begin{definition}\label{GCDd}
Let $u\in {H}^{1}(\Omega)$ and $Au=r_\Omega\tilde f$ in $\Omega$ for some $\tilde f\in\s{H}^{-1}(\Omega)$. Then the generalised co--normal derivative $T^+(\tilde f,u) \in H^{-\ha}(\pO)$  is defined as
\begin{align}
\label{Tgend} 
\left\langle T^+(\tilde f,u)\,,\, w\right\rangle _{\pO}:=
 \langle \tilde f,\gamma^{-1}w \rangle_\Omega + \E(u,\gamma^{-1}w)
 =\langle\tilde f-\check{A}u,\gamma^{-1}w\rangle_\Omega,
  \quad  
\forall\ w\in H^{1/2} (\partial\Omega),
\end{align}
that is,
$
 T^+(\tilde f,u):=(\gamma^{-1})^*(\tilde f-\check{A}u).
$
\end{definition}
By \cite[Lemma 4.3]{McLean2000}), \cite[Theorem 5.3]{MikJMAA2011}, 
we have the estimate
\begin{equation}\label{estimate}
\|T^+(\tilde{f},u)\|_{H^{-1/2}(\pO)}\le
C_1\|u\|_{H^1(\Omega)} + C_2\|\tilde{f}\|_{\s{H}^{-1}(\Omega)},
\end{equation}
and the first Green identity holds in the following form for $u\in {H}^{1}(\Omega)$ such that $Au=r_\Omega\tilde f$ in $\Omega$ for some 
$\tilde f\in\s{H}^{-1}(\Omega)$,
\begin{equation}
\label{Tgen} 
\left\langle T^+(\tilde f,u)\,,\, \gamma^{+}v \right\rangle _{\pO}
=\langle \tilde f,v \rangle_\Omega + \E(u,v)
=\langle\tilde f-\check{A}u,v \rangle_\Omega \quad  \forall\ v\in H^1 (\Omega).
\end{equation}
As follows from Definition~\ref{GCDd}, the generalized co-normal derivative is nonlinear with respect to $u$ for a fixed $\tilde{f}$, but still linear with respect to the couple $(\tilde f, u)$, i.e.,
\begin{align}\label{GCDL}
\alpha_1 T^+(\tilde{f}_1,u_1) +\alpha_2 T^+(\tilde{f}_2,u_2)=
T^+(\alpha_1\tilde{f}_1,\alpha_1u_1) + T^+(\alpha_2\tilde{f}_2,\alpha_2u_2)=
T^+(\alpha_1\tilde{f}_1+\alpha_2\tilde{f}_2,\alpha_1u_1+\alpha_2u_2)
\end{align}
for any complex numbers $\alpha_1,\alpha_2$.

Let us also define some subspaces of $H^s(\Omega)$, cf. \cite{Grisvard1985, Costabel1988, MikJMAA2011, MikJMAA2013}.
\begin{definition}\label{Hst}
Let $s\in\mathbb{R}$ and $A_*:H^s(\Omega)\to {\cal D}^*(\Omega)$
be a linear operator. For $t\ge -\frac{1}{2}$, we introduce the space
 $$
 H^{s,t} (\Omega;A_*):=\{g:\;g\in H^s (\Omega),\
 A_*g|_{\Omega}=\tilde{f}_g|_{\Omega},\ \tilde{f}_g\in
\widetilde{H}^{t}(\Omega)\}
 $$
endowed with the norm
 $ 
 \|g\|_{ H^{s,t} (\Omega;A_*)}:= \left(\|g\|^2_{ H^s(\Omega)}+\|\tilde{f}_g\|^2_{\widetilde{H}^{t}(\Omega)}\right)^{1/2}
 $
 and 
 the inner product 
 \be\label{HstIP}
 (g,h)_{H^{s,t}(\Omega,A_*)}:=(g,h)_{H^{s}(\Omega)} + (\tilde{f}_g,\tilde{f}_h)_{\widetilde H^{t}(\Omega)}.
 \ee
 \end{definition}

The distribution  $\tilde{f}_g\in \widetilde{H}^{t}(\Omega)$, $t\ge
-\frac{1}{2}$, in the above definition is an extension of the distribution
$A_*g|_{\Omega}\in {H}^{t}(\Omega)$, and the extension  is
unique (if it does exist) since any distribution from the space
$H^t(\mathbb{R}^3)$ with a support in $\partial \Omega$ is identical zero if
$t\geq -1/2$ (see e.g. \cite[Lemma 3.39]{McLean2000}, \cite[Theorem 2.10]{MikJMAA2011}). 
We denote this extension as the operator $\tilde A_*$, i.e., $\tilde A_*g=\tilde f_g$.
The uniqueness implies that the norm $
\|g\|_{ H^{s,t} (\Omega;A_*)}$ is well defined. 

We will mostly use the operators $A$ or $\Delta$ as $A_*$ in the above definition. Note that since
$
Au -a\Delta u =\nabla a\cdot\nabla u 
\in L_2(\Omega)
$
 for $u\in H^1(\Omega)$,  we have
 $H^{1,0}(\Omega;A)= H^{1,0} (\Omega;\Delta)$.

\begin{definition}\label{Dccd}
For $u\in H^{1,-\frac{1}{2}}(\Omega;A)$,   we define the
{\em canonical  co-normal derivative} $T^+u \in
H^{\frac{1}{2}}(\partial\Omega)$ as
\begin{equation}\label{Tcandef}
 \left\langle  T^+ u\,,\, w \right\rangle _{\partial\Omega}
 :=\langle \tilde{A} u,\gamma^{-1}w \rangle_{\Omega} +\E(u,\gamma^{-1}w)
=\langle\tilde Au-\check{A}u,v \rangle_\Omega 
  \quad
\forall\ w\in H^{\frac{1}{2}} (\partial\Omega),
\end{equation}
that is,
$
 T^+u:=(\gamma^{-1})^*(\tilde Au-\check{A}u).
$
\end{definition}

The canonical co-normal derivatives $T^+ u$ is
independent of (non-unique) choice of the operator $\gamma^{-1}$, the operator $T^+ :
H^{1,-\frac{1}{2}}(\Omega;A)\to H^{-\frac{1}{2}}(\partial\Omega)$ is continuous, and
the first Green identity holds in the following form,
\begin{eqnarray} \label{Tcan}
 \left\langle T^+u\,,\gamma^+ v\right\rangle _{\partial\Omega}
 =\langle \tilde{A} u,v \rangle_{\Omega}\ +\ \E(u,v)
  \quad
\forall\ v\in H^1 (\Omega).
\end{eqnarray}
The operator
$T^+: H^{1,t}(\Omega;A)\to H^{-\frac{1}{2}}(\partial\Omega)$ in Definition~\ref{Dccd} is continuous for any $t\ge
-\frac{1}{2}$.
The canonical co-normal derivative is defined by the function $u$
and operator $A$ only and does not depend separately on the right hand side
$\tilde f$ (i.e. its behaviour on the boundary), unlike the generalised
co-normal derivative defined in \eqref{Tgen}, and the operator $T^+$ is linear.
Note that the canonical co-normal derivative coincides with the
classical co-normal derivative $T^+u=a\frac{\partial u}{\partial
n}$ if the latter does exist in the trace sense, see \cite[Corollary 3.14 and Theorem 3.16]{MikJMAA2011}.

Let  $u\in H^{1,-\ha}(\Omega;A)$. Then Definitions \ref{GCDd} and \ref{Dccd} imply
that the generalised co-normal derivative for arbitrary extension
$\tilde f\in \s{H}^{-1}(\Omega)$ of the distribution $Au$ can be
expressed as
\begin{equation}
\label{Tgentil} \left\langle
 T^+(\tilde f,u)\,,\, w
\right\rangle _{\pO}=\left\langle T^{+}u\,,\, w
\right\rangle _{\pO}+\langle\tilde f-\tilde A u,\gamma^{-1}w \rangle_\Omega
  \quad  
\forall\ w\in H^{\frac{1}{2}} (\Omega)
 .
\end{equation}

Let $u\in H^1(\Omega)$ and $v\in H^{1,0}(\Omega;A)$. 

Swapping over the roles of $u$ and $v$ in \eqref{Tcan},
we obtain the first Green identity for $v$,
\begin{equation} \label{Greentilde}
\E(u,v)
 +\int_{\Omega}\, u(x)Av(x)dx=
 \left\langle T^{+}v\,,\, \gamma^+u\right\rangle_{\pO}
 .
\end{equation}
If, in addition, $Au=\tilde f$ in $\Omega$, where $\tilde f\in
\s{H}^{-1}(\Omega)$, then according to the definition of
$T^+(\tilde f,u)$ in \eqref{Tgen}, the second Green identity can be
written as
\begin{equation} \label{2.5s}
\langle\tilde f,v \rangle_\Omega\
 -\int_{\Omega}\, u(x)Av(x)dx=
 \left\langle T^+(\tilde f,u)\,,\, \gamma^+ v\right\rangle _{\pO}
 -\left\langle T^{+}v\,,\, \gamma^+u\right\rangle _{\pO}
 .
\end{equation}

If, moreover, $u,v\in H^{1,0}(\Omega;A)$, then we arrive at
the familiar form of the second Green identity for the canonical
extension and canonical co-normal derivatives
\begin{equation} \label{GreenCan}
\int_{\Omega}[v(x)Au(x)- u(x)Av(x)]dx=
 \left\langle T^{+}u\,,\, \gamma^+ v\right\rangle _{\pO}
 -\left\langle T^{+}v\,,\, \gamma^+u\right\rangle _{\pO}
 .
\end{equation}


\section{Parametrix and potential type operators}


We will say, a function $P(x,y)$ of two variables $x,y\in \Omega$
is a parametrix (the Levi function)  for the operator $A(x,\pa_x)$
in $\R^3$ if (see, e.g., \cite{Levi1909, Hilbert1912, Miranda1970, Hellwig1977,
Pomp1998a, Pomp1998b, MikEABEM2002})
\begin{align}
\label{3.1} \dst A(x,\pa_x)\,P(x,y)=\delta (x-y) +R(x,y),
\end{align}
where $\delta(\cdot)$ is the Dirac distribution and $R(x,y)$
possesses a weak (integrable) singularity at $x=y$, i.e.,
\begin{equation}
\label{3.2} \dst R(x,y)={\cal
O}\,(|x-y|^{-\varkappa})\;\;\;\mbox{\rm with}\;\;\;\; \varkappa<3.
\end{equation}
It  is easy to see that for the operator $A(x,\pa_x)$ given by the
left-hand side in \eqref{2.1}, the function
\begin{equation}
\label{3.3} P(x,y)=\frac{1}{a(y)}P_\Delta(x,y)=\frac{-1}{4\pi
\,a(y)\,|x-y|}\,,\;\;\;x,y\in \R^3,
\end{equation}
is a parametrix, while the corresponding remainder function is
\begin{equation}
\label{3.4} R(x,y)=\nabla a(x)\cdot\nabla_x P(x,y)=-\frac{1}{a(y)}\nabla a(x)\cdot\nabla_yP_\Delta(x,y)
=\frac{(x-y)\cdot\nabla a(x)}{4\pi\,a(y)\,|x-y|^3} \,,\;\;\;x,y\in \R^3,
\end{equation}
and satisfies estimate \eqref{3.2} with $\varkappa=2$, due to the
smoothness of the function $a(x)$.
Here \begin{equation}
\label{3.3D} P_\Delta(x,y)=\frac{-1}{4\pi\,|x-y|}\,,\;\;\;x,y\in \R^n
\end{equation}
is the fundamental solution of the Laplace equation.
Evidently, the parametrix $P(x,y)$ given by \eqref{3.3} is related with the
fundamental solution to the operator 
$A(y,\pa_x):=a(y)\Delta(\pa_x)$ 
with "frozen" coefficient $a(x)=a(y)$ and
 $
  A(y,\pa_x)\,P(x,y)=\delta (x-y).
 $


Let $a\in C^\infty(\R^3)$ and $a>0$ a.e. in $\R^3$. For scalar functions $g$, for which the integrals have sense,
the parametrix-based  volume potential operator and the remainder
potential operator, corresponding to parametrix \eqref{3.3} and to
remainder \eqref{3.4} are defined as
\begin{eqnarray}
&&{\bf P}g(y):= \int\limits_{\R^3} P(x,y)\,g(x)\,dx, \quad y\in \R^3,\label{4.9bP}\\ 
&&{\cal P}g(y):= \int\limits_{\Omega} P(x,y)\,g(x)\,dx, \quad y\in \Omega,\label{4.9P}\\
&&{\cal R}g(y):=\int\limits_{\Omega} R(x,y)\,g(x)\,dx, \quad y\in \Omega. \label{4.9}
\end{eqnarray}
For $g\in {H}^s(\Omega)$, $s\in\R$, \eqref{4.9bP} is understood as 
${\mathbf P}\,g\,=\frac{1}{a}\;{\mathbf P}_\Delta\,g,$
where the Newtonian potential operator ${\mathbf P}_\Delta$ for the Laplace operator $\Delta$ is well defined in terms of the Fourier transform (i.e., as the pseudo-differential operator),
 on any space $H^s(\R^3)$. 
For  $g\in \tilde{H}^s(\Omega)$, and any $s\in\R$, definitions \eqref{4.9P} and \eqref{4.9} can be understood as
\begin{align}\label{PRdef}
{\cal P}g= \frac{1}{a}\;r_\Omega{\mathbf P}_\Delta\,g, \quad 
{\cal R}g= -\frac{1}{a}\;r_\Omega\nabla\cdot{\mathbf P}_\Delta\,(g\,\nabla a),
\end{align}
while for $g\in {H}^s(\Omega)$, $-\ha<s<\ha$, as \eqref{PRdef}
with $g$ replaced by $\widetilde E g$, where $\widetilde E:{H}^s(\Omega)\to \widetilde{H}^s(\Omega)$, $-\ha<s<\ha$, is the unique continuous extension operator related with the operator $\mathring E$ of extension by zero, cf. \cite[Theorem 2.16]{MikJMAA2011}. 

The single and the double layer surface potential operators, are defined as
\begin{eqnarray}
\label{3.6} && \dst
Vg(y):=-\int\limits_{\pO} P(x,y)\, g(x)\,dS_x,   \;\;\;\; y\not\in \pO,  \\
\label{3.7} && \dst Wg(y):=-\int\limits_{\pO}
\big[\,T(x,n(x),\pa_x)\,P(x,y)\,\big]\, \, g(x)\,dS_x,   \;\;\;\;
y\not\in \pO,\quad
\end{eqnarray}
where the integrals are understood in the distributional sense if
$g$ is not integrable.

The corresponding boundary integral (pseudodifferential) operators
of direct surface values of the single layer potential $\V$ and of
the double layer potential $\W$, and the co-normal derivatives of
the single layer potential $\Wp$ and of the double layer potential
$\L^+$ are
\begin{eqnarray}
\label{3.11} && \dst
{\cal V}\,g(y):=-\int\limits_{\pO} P(x,y)\, g(x)\,dS_x,  \\[1mm]
\label{3.12} && \dst {\cal W}\,g(y):=-\int\limits_{\pO}
\big[\,T^+_x\,P(x,y)\,\big]\,
\, g(x)\,dS_x,    \\[1mm]
\label{3.13} && \dst {\Wp}\,g(y):= -\int\limits_{\pO}
\big[\,T^+_y\,P(x,y)\,\big]\, g(x)\,dS_x,
        \\[1mm]
\label{3.14} && \dst {\cal L}^{+ }g(y):= T^+Wg(y),
\end{eqnarray}
where $y\in \pO$.

From definitions \eqref{3.2}, \eqref{3.6}, \eqref{3.7} one can obtain
representations of the parametrix-based potential operators in terms of their counterparts for $a=1$ (i.e.
associated with the  Laplace operator $\Delta$), which we equip with the subscript $\Delta$, cf. \cite{CMN-1},
\begin{eqnarray}
\label{3.d1} 
{\mathbf P}\,g\,=\frac{1}{a}\;{\mathbf P}_\Delta\,g,\quad  {\mathcal P}\,g\,=\frac{1}{a}\;{\mathcal P}_\Delta\,g,
  &&\quad
   {\mathcal R}\,g=-\,\frac{1}{a}\; \nabla\cdot{\mathcal P}_\Delta\,(g\,\nabla a),\qquad\\
\label{VWa}
Vg=\frac{1}{a}V_{_\Delta} g,&&\quad Wg=\frac{1}{a}W_{_\Delta}
(ag).
\end{eqnarray}
\begin{eqnarray}
\label{VWab1}&&\mathcal V g=\frac{1}{a}\mathcal V_{_\Delta} g,\quad \mathcal W
g=\frac{1}{a}\mathcal W_{_\Delta}
(ag),\\
\label{VWab3}&&\mathcal W\,^\prime g= {\mathcal W^{\,\prime}_{_\Delta}} g
+\left[a\frac{\partial }{\partial
n}\left(\frac{1}{a}\right)\right]\mathcal V_{_\Delta} g
,\\
\label{VWab4}
 &&{\mathcal L}^{\pm }g
 ={\mathcal L}_{_\Delta}(ag) +\left[a\frac{\partial }{\partial
n}\left(\frac{1}{a}\right)\right] W^\pm_{_\Delta} (ag).
\end{eqnarray}
Hence
 \begin{equation}\label{DV,DW=0}
\Delta(aVg)=0, \quad \Delta(aWg)=0\ \text{in}\ \Omega, \quad
\forall g\in H^s(\partial \Omega)\quad \forall s\in\R,
 \end{equation}
\begin{equation}\label{DeltaPg}
    \Delta(a\P g)=g\ \text{in}\ \Omega, \quad \forall
g\in \s{H}^s(\Omega)\quad \forall s\in\R,
\end{equation}

For $g_1\in H^{-\ha}(\pO)$, and  $g_2\in H^{\ha}(\pO)$, there hold
the following jump relations on $\pO$
\begin{eqnarray}
&& \dst [Vg_1(y)]^{+}= {\cal V}g_1(y)\label{3.8}
\\
&& \dst [Wg_2(y)]^{+}= - \ha\,g_2(y)  + {\cal W}g_2(y),\label{3.9}
\\
&& \dst [T(y,n(y),\pa_y)Vg_1(y)]^{+}= \ha\,g_1(y) +
{\Wp}g_1(y),\label{3.10}
\end{eqnarray}
where  $y\in \pO$.

The jump relations as well as mapping properties of potentials and
operators \eqref{3.6}-\eqref{4.9}  are well known for the case
$a=const$. Employing \eqref{3.d1}-\eqref{VWab4}, they were extended to the case of variable coefficient
$a(x)$ in \cite{CMN-1,CMN-2}, and in addition to
\eqref{3.8}-\eqref{3.10} some of them are presented in the Appendix
for convenience.

\section{The third Green identity and integral relations}\label{G3G}
We will apply in this section some limiting procedures (cf.
\cite{Miranda1970}, \cite[S. 3.8]{Hellwig1977}) to obtain the parametrix-based third Green identities. 

\begin{theorem}\label{3rdGA}
(i) If $u\in {H}^{1}(\Omega)$, then following {\em third Green identity} holds,
\begin{equation}
u+{\cal R}u +W\gamma^+u=\P\check{A}u
 \quad \mbox{in}\ \Omega,\label{4.G3til}
\end{equation}
where the operator $\check A$ is defined in \eqref{checkA}, and 
for  $u\in C^1(\overline \Omega)$,
\begin{eqnarray}\label{LPtil}
 \P\check{A}u(y):=   \langle \check{A}u,P(\cdot,y) \rangle_\Omega=-\E(u,P(\cdot,y))
 =-\int_{\Omega}a(x) \nabla u(x) \cdot\nabla_x P(x,y)\,dx.\qquad
\end{eqnarray}

(ii) If $Au=\tilde f|_\Omega$ in $\Omega$, where $\tilde f\in \s{H}^{-1}(\Omega)$,
then the {\em generalised third Green identity} takes form,
\begin{eqnarray}
u+{\cal R}u - VT^+(\tilde f,u)  +W\gamma^+u=
 {\cal P}\tilde f  &&\mbox{in }\Omega. \qquad\label{4.2Gen}
 \end{eqnarray}
\end{theorem}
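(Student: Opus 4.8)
The plan is to prove part (i) first by a density and limiting argument starting from smooth functions, and then to deduce part (ii) from part (i) using the definition of the generalised co-normal derivative. For part (i), I would begin with $u\in C^1(\overline\Omega)$ (or $C^\infty(\overline\Omega)$) and apply the classical third Green identity associated with the frozen-coefficient operator $A(y,\pa_x)=a(y)\Delta_x$, whose fundamental solution is exactly $P(x,y)$ given in \eqref{3.3}. Concretely, I would use the second Green identity \eqref{2.5s}/\eqref{GreenCan} with $v(x)=P(x,y)$ after excising a small ball $B_\eps(y)$, let $\eps\to 0$, and collect the contributions: the singular term produces $u(y)$, the boundary terms produce $W\gamma^+u(y)$ and a single-layer term carrying the classical co-normal derivative, and the volume term involving $A(x,\pa_x)P(x,y)=\delta(x-y)+R(x,y)$ yields the remainder potential ${\cal R}u(y)$. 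Rewriting the right-hand side volume integral as $\langle\check Au,P(\cdot,y)\rangle_\Omega=-\E(u,P(\cdot,y))$ via \eqref{checkA} and \eqref{Ltil}, and checking that for smooth $u$ the single-layer term $VT^+u$ is absorbed correctly, gives \eqref{4.G3til} with $\P\check Au$ defined by \eqref{LPtil}. One must be slightly careful: for $u\in C^1(\overline\Omega)$ the expression $\P\check Au$ should be read as the integral in \eqref{LPtil}, and for general $u\in H^1(\Omega)$ as the duality pairing $\langle\check Au,P(\cdot,y)\rangle_\Omega$, which makes sense because $\check Au\in\s H^{-1}(\Omega)$ and $P(\cdot,y)\in H^1_{loc}$ with enough regularity away from and (in a weak sense) near $y$; I would invoke the mapping properties of $\P=\frac1a\,r_\Omega\mathbf P_\Delta$ recorded in \eqref{PRdef}, \eqref{3.d1} to make $\P\check Au$ a well-defined element of, say, $H^1(\Omega)$.

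The passage from $C^1(\overline\Omega)$ to $u\in H^1(\Omega)$ is then a density/continuity argument: $C^\infty(\overline\Omega)$ is dense in $H^1(\Omega)$, and each operator appearing in \eqref{4.G3til} — the identity, ${\cal R}$, $W\gamma^+$, and $\P\check A$ — is continuous from $H^1(\Omega)$ into $H^1(\Omega)$ (or at least into ${\cal D}^*(\Omega)$), using the mapping properties of the Laplace potentials $\mathbf P_\Delta$, $W_\Delta$ together with the relations \eqref{VWa}, \eqref{VWab1}, \eqref{DeltaPg}, \eqref{DV,DW=0}, continuity of the trace $\gamma^+:H^1(\Omega)\to H^{1/2}(\pO)$, and continuity of $\check A:H^1(\Omega)\to\s H^{-1}(\Omega)$ from \eqref{Ltil}. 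Hence both sides of \eqref{4.G3til} depend continuously on $u\in H^1(\Omega)$, and the identity extends by density.

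For part (ii), suppose $Au=r_\Omega\tilde f$ in $\Omega$ with $\tilde f\in\s H^{-1}(\Omega)$. By Definition~\ref{GCDd}, $\langle\tilde f-\check Au,\gamma^{-1}w\rangle_\Omega=\langle T^+(\tilde f,u),w\rangle_{\pO}$ for all $w\in H^{1/2}(\pO)$, i.e. $\tilde f-\check Au=(\gamma^{-1})^*$-preimage data supported on $\pO$ in the appropriate dual sense; more precisely I would write $\check Au=\tilde f-(\text{distribution supported on }\pO)$ only formally, and instead argue directly at the level of potentials. Applying $\P$ to the functional identity $\check Au=\tilde f - (\tilde f-\check Au)$ and using that, for a functional of the form arising from $T^+(\tilde f,u)$, the parametrix potential reproduces the single-layer potential, namely $\P$ applied to the boundary functional equals $-VT^+(\tilde f,u)$ by comparing \eqref{3.6} with \eqref{4.9bP} and \eqref{Tgend}, I would get $\P\check Au={\cal P}\tilde f - VT^+(\tilde f,u)$; substituting this into \eqref{4.G3til} yields \eqref{4.2Gen}. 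The cleanest way to justify the identity $\P(\tilde f-\check Au) = -VT^+(\tilde f,u)$ rigorously is to test both sides against smooth functions and reduce, via Fubini and the Laplace-potential representations \eqref{PRdef}, \eqref{VWab1}, to the scalar identity $\langle \tilde f-\check Au, P(\cdot,y)\rangle = -\int_{\pO}P(x,y)T^+(\tilde f,u)(x)\,dS_x$, which is precisely \eqref{Tgend} with the (smooth-in-$x$, for $y\notin\pO$) test function $w$ replaced by $\gamma^+_x P(\cdot,y)$, together with the fact that $\gamma^{-1}\gamma^+_x P(\cdot,y)$ can be replaced by $P(\cdot,y)$ inside the pairing since $\tilde f-\check Au$ annihilates $\s H^1(\Omega)$.

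The main obstacle I anticipate is \emph{the low regularity}: for $u$ merely in $H^1(\Omega)$ neither $T^+u$ (classical or canonical) nor the pointwise values defining the potentials are available, so the excision-and-limit step must be carried out only for smooth $u$ and everything else pushed through by density — which in turn requires assembling the correct continuity statements for the variable-coefficient potentials $\cal R$, $W$, $\cal P$, $\P$ on the borderline Sobolev scale, and being careful that $\P\check Au$ is interpreted as a duality pairing (not a convergent integral) with $\check Au\in\s H^{-1}(\Omega)$ and $P(\cdot,y)$ of limited smoothness. A secondary technical point is keeping track of signs and the factor $a(y)$ throughout \eqref{3.3}–\eqref{3.4}, and verifying that the remainder potential ${\cal R}u$ that emerges from the volume term $\int R(x,y)u(x)\,dx$ matches definition \eqref{4.9}/\eqref{PRdef} after the integration by parts hidden in $\check A$.
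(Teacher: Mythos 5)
Your part (i) is essentially the paper's own argument: the paper likewise starts from $u\in\mathcal D(\overline\Omega)$, excises a ball $B_\epsilon(y)$, applies a Green identity on $\Omega_\epsilon$ with $v=P(\cdot,y)$, lets $\epsilon\to0$, and then extends to all $u\in H^1(\Omega)$ by density of $\mathcal D(\overline\Omega)$ and the mapping properties of the potentials. The only cosmetic difference is that the paper uses the \emph{first} Green identity \eqref{Greentilde} for the pair $(u,P(\cdot,y))$, so no single-layer term ever appears and the right-hand side is directly $-\E_{\Omega_\epsilon}(u,P(\cdot,y))\to\P\check{A}u(y)$; your use of the second Green identity makes $VT^+u$ appear explicitly and then requires re-absorbing it into $\P\check{A}u$, which is fine for smooth $u$ but is an extra step.

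For part (ii) your route differs genuinely from the paper's, and it has two weak points. First, a sign slip: from $\P(\tilde f-\check{A}u)=-VT^+(\tilde f,u)$ (which is correct, and is exactly the computation \eqref{Tgen1} in the proof of Lemma \ref{IDequivalenceGen}) one obtains $\P\check{A}u={\cal P}\tilde f+VT^+(\tilde f,u)$, not ${\cal P}\tilde f-VT^+(\tilde f,u)$; with your sign, substitution into \eqref{4.G3til} would produce $+VT^+(\tilde f,u)$ in \eqref{4.2Gen}, and your closing ``scalar identity'' is inconsistent with your own preceding formula once \eqref{3.6} is applied. Second, and more substantively, replacing $\gamma^{-1}\gamma^+P(\cdot,y)$ by $P(\cdot,y)$ inside $\langle\tilde f-\check{A}u,\cdot\rangle_\Omega$ is not justified by saying the difference lies in $\s{H}^1(\Omega)$: since $P(\cdot,y)$ belongs to $H^s(\Omega)$ only for $s<1/2$, neither $P(\cdot,y)$ nor that difference is in $H^1(\Omega)$, so the pairing of the $\s{H}^{-1}(\Omega)$ functional $\tilde f-\check{A}u$ with $P(\cdot,y)$ is not a priori defined (nor is $\int_\Omega a\,\nabla u\cdot\nabla_xP\,dx$ absolutely convergent for general $u\in H^1(\Omega)$). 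This is precisely the point the paper's proof is built to circumvent: it approximates $\tilde f$ by $\tilde f_k\in\mathcal D(\Omega)$, constructs via Theorem \ref{Tseq} smooth $u_k$ with $Au_k=r_\Omega\tilde f_k$, $u_k\to u$ in $H^1(\Omega)$ and $T^+u_k\to T^+(\tilde f,u)$ in $H^{-1/2}(\pO)$, proves $\P\check{A}u_k={\cal P}\tilde f_k+VT^+u_k$ classically, and passes to the limit using the continuity of ${\cal P}$ and $V$. Your concluding density remark points in this direction, but as written the direct duality argument for (ii) leaves this gap open.
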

\begin{proof}
(i) Let first $u\in\mathcal D(\overline \Omega)$. Let $y\in\Omega$,  $B_\epsilon(y)\subset \Omega$ be a ball centered in $y$ with sufficiently small radius $\epsilon$, and $\Omega_\epsilon:=\Omega\setminus\overline{B_\epsilon}(y)$. For the fixed $y$, evidently, $P(\cdot,y)\in \mathcal D(\overline{\Omega_\epsilon})\subset H^{1,0}(A;\Omega_\epsilon)$ and has the coinciding classical and canonical conormal derivatives on $\pO_\epsilon$. Then from \eqref{3.1} and the first Green identity \eqref{Greentilde} employed for $\Omega_\epsilon$ with $v=P(\cdot,y)$ we obtain
\begin{multline}\label{4.G3tile}
 - \int_{\partial B_\epsilon(y)} T^{+}_x P(x,y)\gamma^+u(x) dS(x) - \int_\pO T^{+}_x P(x,y)\gamma^+u(x)  dS(x)
 +\int_{\Omega_\epsilon}\, u(x)R(x,y)dx \\
 =- \int_{\Omega_\epsilon}a(x) \nabla u(x) \cdot\nabla_x P(x,y)\,dx.
 \end{multline}
 Taking limits as $\epsilon\to 0$, equation \eqref{4.G3tile} reduces to the third Green identity \eqref{4.G3til}-\eqref{LPtil} for any $u\in \mathcal D(\overline \Omega)$.
Taking into account the density of $\mathcal D(\overline \Omega)$ in $H^{1}(\Omega)$, and the mapping properties of the integral potentials, see Appendix, we obtain that \eqref{4.G3til} holds true also for any
$u\in H^{1}(\Omega)$.

(ii) Let  $\{\tilde f_k\}\in\mathcal D(\Omega)$ be a sequence converging to 
$\tilde f$ in $\widetilde H^{-1}(\Omega)$ as $k\to\infty$. 
Then, according to Theorem~\ref{Tseq}, 
there exists a sequence  $\{u_k\}\in\mathcal D(\overline\Omega)$ converging to $u$ in $H^{1}(\Omega)$ such that 
$Au_k=r_\Omega\tilde f_k$ and $T^+(u_k)=T^+(\tilde f_k,u_k)$ converges to $T^+(\tilde{f},u)$ in ${H^{-\ha}(\partial\Omega)}$.
For such $u_k$ we have by \eqref{LPtil} and \eqref{Tgen},
 \begin{multline*} 
  \P\check{A}u_k(y)=\frac{1}{a(y)}\nabla_y\cdot\int_{\Omega} a(x) P_\Delta(x,y) \nabla u_k(x)  \,dx\\
=-\lim_{\epsilon\to 0}\int_{\Omega_\epsilon} a(x) \nabla u_k(x) \cdot\nabla_x P(x,y)\,dx 
=-\lim_{\epsilon\to 0} \mathcal E_{\Omega_\epsilon}(u_k,P(\cdot, y))\\
=\lim_{\epsilon\to 0}\left[\int_{\Omega_\epsilon} \tilde f_k P(x,y)\,dx 
 - \int_{\partial B_\epsilon(y)} P(x,y)T^+u_k(x) dS(x) - \int_\pO P(x,y)T^+u_k(x)  dS(x)\right]  
={\cal P}\tilde f_k + VT^+u_k(y).\quad
 \end{multline*} 
Taking limits as $k\to\infty$, we obtain 
$
\P\check{A}u(y)={\cal P}\tilde f + VT^+(\tilde f,u),
$
which substitution to \eqref{4.G3til} gives \eqref{4.2Gen}.
\end{proof}

For some functions $\tilde f$, $\Psi$, $\Phi$, let us consider a more
general "indirect" integral relation, associated with
\eqref{4.2Gen},
\begin{eqnarray}
 u+{\cal R}u - V\Psi +W\Phi &=&
 {\cal P}\tilde f\ \mbox{ in }\Omega. \label{4.2nd}
\end{eqnarray}
The following statement extends Lemma 4.1 from \cite{CMN-1}, where
the corresponding assertion was proved for $\tilde f\in L_2(\Omega)$.

\begin{lemma}\label{IDequivalenceGen}
Let $u\in H^1(\Omega)$, $\Psi\in H^{-\ha}(\pO)$, $\Phi\in
H^{\ha}(\pO)$, and $\tilde f\in \s{H}^{-1}(\Omega)$ satisfy
(\ref{4.2nd}). Then
\begin{align}
\label{2.6'gen}Au&= r_\Omega \tilde f \text{in}  \Omega,
\\
\label{difference}
 r_\Omega V(\Psi -T^+(\tilde f,u) ) - r_\Omega W(\Phi - \gamma^+u) &= 0 \text{in}  \Omega,
\\
\gamma^+u +\gamma^+{\cal R}u - \V \Psi -\ha\Phi +\W \Phi&=
 \gamma^+{\cal P}\tilde f \text{on}\partial\Omega, \qquad\label{4.2u+gen}\\
 T^+(\tilde f,u)+T^+{\cal R}u -\ha\Psi - {\Wp}\Psi +\L^+ \Phi &=
  T^+(\tilde f+\mathring E\,{\mathcal R}_*\tilde f,{\cal P}\tilde f)  \text{on}  \partial\Omega,\quad \label{4.2T+gen}
\end{align}
where ${\mathcal R}_*{\tilde f}\in L_2(\Omega)$ is defined as
 \begin{equation}\label{R^0}
 {\mathcal R}_*{\tilde f}:=-\sum_{j=1}^3\partial_j[(\partial_j a)\mathcal P{\tilde f}].
 \end{equation}
\end{lemma}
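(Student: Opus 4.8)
The plan is to read off the four conclusions from \eqref{4.2nd} by applying to it, one after another, the operator $A$, the generalised third Green identity, the interior trace $\gamma^+$, and the generalised co-normal derivative.

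\emph{Step 1 (equation \eqref{2.6'gen}).} Multiply \eqref{4.2nd} by $a$ and use the representations \eqref{PRdef}, \eqref{3.d1}, \eqref{VWa}, so that $aV\Psi=V_{_\Delta}\Psi$, $aW\Phi=W_{_\Delta}(a\Phi)$, $a{\cal P}\tilde f=r_\Omega{\mathbf P}_\Delta\tilde f$ and $a{\cal R}u=-r_\Omega\nabla\cdot{\mathbf P}_\Delta\mathring E[u\nabla a]$ (the last understood through $H^1(\Omega)\hookrightarrow L_2(\Omega)$). Apply $\Delta$ and invoke $\Delta(aVg)=\Delta(aWg)=0$ and $\Delta(a{\cal P}g)=g$ in $\Omega$ from \eqref{DV,DW=0}, \eqref{DeltaPg}, together with $\Delta{\mathbf P}_\Delta g=g$, which gives $\Delta(a{\cal R}u)=-\nabla\cdot(u\nabla a)$ in $\Omega$; all potential contributions then collapse and only $\Delta(au)-\nabla\cdot(u\nabla a)=r_\Omega\tilde f$ in $\Omega$ remains. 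Since the elementary distributional identity $Aw=\Delta(aw)-\nabla\cdot(w\nabla a)$ holds for every $w\in H^1(\Omega)$, this is precisely $Au=r_\Omega\tilde f$, i.e.\ \eqref{2.6'gen}.

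\emph{Step 2 (equations \eqref{difference} and \eqref{4.2u+gen}).} By \eqref{2.6'gen}, Theorem~\ref{3rdGA}(ii) applies to $u$ and yields the generalised third Green identity \eqref{4.2Gen}. Subtracting \eqref{4.2nd} from \eqref{4.2Gen} cancels $u$, ${\cal R}u$ and ${\cal P}\tilde f$ and leaves $V\bigl(\Psi-T^+(\tilde f,u)\bigr)-W\bigl(\Phi-\gamma^+u\bigr)=0$ in $\Omega$, which is \eqref{difference}. Next, take $\gamma^+$ of \eqref{4.2nd}: by the mapping properties of the parametrix-based potentials (see Appendix) both ${\cal R}u$ and ${\cal P}\tilde f$ lie in $H^1(\Omega)$, so their traces are well defined, while the jump relations \eqref{3.8}, \eqref{3.9} give $\gamma^+V\Psi={\cal V}\Psi$ and $\gamma^+W\Phi=-\ha\Phi+{\cal W}\Phi$; collecting terms gives \eqref{4.2u+gen}.

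\emph{Step 3 (equation \eqref{4.2T+gen}).} First, $V\Psi,W\Phi,{\cal R}u\in H^{1,0}(\Omega;A)$: each is in $H^1(\Omega)$, and from $\Delta(aV\Psi)=\Delta(aW\Phi)=0$, $\Delta(a{\cal R}u)=-\nabla\cdot(u\nabla a)\in L_2(\Omega)$ and $Aw=\Delta(aw)-\nabla\cdot(w\nabla a)$ one gets $A(V\Psi),A(W\Phi),A({\cal R}u)\in L_2(\Omega)$, so each has a canonical co-normal derivative (Definition~\ref{Dccd}) with canonical extension $\mathring E A(\cdot)$. By contrast $A{\cal P}\tilde f=r_\Omega\tilde f+{\cal R}_*\tilde f$ with ${\cal R}_*\tilde f=-\nabla\cdot({\cal P}\tilde f\,\nabla a)\in L_2(\Omega)$ (cf.\ \eqref{R^0}), while $r_\Omega\tilde f$ need not be in $L_2(\Omega)$ nor admit an extension to $\widetilde H^{-1/2}(\Omega)$, so ${\cal P}\tilde f$ need not lie in $H^{1,-\ha}(\Omega;A)$ and only the generalised co-normal derivative $T^+(\tilde f+\mathring E{\cal R}_*\tilde f,{\cal P}\tilde f)$, built on the given extension $\tilde f$ of $r_\Omega\tilde f$, is available; likewise only $T^+(\tilde f,u)$ is available for $u$. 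These extensions are mutually consistent: applying $Aw=\Delta(aw)-\nabla\cdot(w\nabla a)$ to \eqref{4.2nd} gives $A({\cal R}u)-A(V\Psi)+A(W\Phi)=-\nabla\cdot\bigl((u+{\cal R}u-V\Psi+W\Phi)\nabla a\bigr)=-\nabla\cdot({\cal P}\tilde f\,\nabla a)={\cal R}_*\tilde f$ in $\Omega$, whence $\tilde f+\mathring E A({\cal R}u)-\mathring E A(V\Psi)+\mathring E A(W\Phi)=\tilde f+\mathring E{\cal R}_*\tilde f$. Applying the generalised co-normal derivative to both sides of \eqref{4.2nd} with this common extension, using the bilinearity \eqref{GCDL} to split the left-hand side into $T^+(\tilde f,u)$ plus the three potential couples, then \eqref{Tgentil} to identify those three couples with the canonical $T^+{\cal R}u$, $T^+V\Psi$, $T^+W\Phi$, and finally the jump relation \eqref{3.10} ($T^+V\Psi=\ha\Psi+{\Wp}\Psi$) and the definition \eqref{3.14} ($T^+W\Phi={\cal L}^+\Phi$), we arrive at \eqref{4.2T+gen}.

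\emph{Main obstacle.} The delicate point is Step~3: since $\tilde f\in\widetilde H^{-1}(\Omega)$ is not assumed to lie in $\widetilde H^{-1/2}(\Omega)$, the generalised (not the canonical) co-normal derivative must be used for both $u$ and ${\cal P}\tilde f$, and it is precisely the bookkeeping showing that the canonical extensions of $A$ applied to the individual summands of \eqref{4.2nd} add up exactly to the prescribed extension $\tilde f+\mathring E{\cal R}_*\tilde f$ of $A{\cal P}\tilde f$ that makes \eqref{GCDL} deliver the right-hand side of \eqref{4.2T+gen} rather than a different expression; everything else (the identity $Aw=\Delta(aw)-\nabla\cdot(w\nabla a)$, the $H^{1,0}$-membership of the potentials, the mapping properties) is routine.
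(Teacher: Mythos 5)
Your proposal is correct and follows essentially the same route as the paper's proof: annihilating the potentials with $a(\cdot)$ followed by $\Delta$ to extract \eqref{2.6'gen}, invoking the generalised third Green identity \eqref{4.2Gen} to get \eqref{difference}, taking traces with the jump relations for \eqref{4.2u+gen}, and exploiting that ${\mathcal R}_*\tilde f\in L_2(\Omega)$ admits the canonical extension $\mathring E{\mathcal R}_*\tilde f$ so that the pair-linearity \eqref{GCDL} delivers \eqref{4.2T+gen}. The only (immaterial) differences are that you apply $a\Delta$ directly to \eqref{4.2nd} via $Aw=\Delta(aw)-\nabla\cdot(w\nabla a)$ where the paper first subtracts \eqref{4.G3til}, and that you split $T^+$ of both sides of \eqref{4.2nd} by bilinearity where the paper works with the single canonical co-normal derivative $T^+({\cal P}\tilde f-u)$.
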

\begin{proof} 
Subtracting  \eqref{4.2nd} from identity \eqref{4.G3til}, we obtain
\begin{equation}
\label{4.12} V\Psi(y)-W(\Phi-\gamma^+u)(y) ={\cal P}[\check{A}u-\tilde f](y),
\;y\in \Omega .
\end{equation}
Multiplying equality \eqref{4.12} by $a(y)$, applying the Laplace
operator $\Delta$ and taking into account \eqref{DV,DW=0},
\eqref{DeltaPg}, we get
$
r_\Omega\tilde f=r_\Omega\check{A}u=Au$ in $\Omega.
 $
This means $\tilde f$ is an extension of the distribution $Au\in H^{-1}(\Omega )$ to
$\s{H}^{-1}(\Omega )$, and $u$ satisfies  \eqref{2.6'gen}. Then \eqref{Tgen}
implies
 \begin{eqnarray}
 \label{Tgen1}
{\cal P}[\check{A}u-\tilde f](y)&=&\langle \check{A}u-\tilde f,
P(\cdot,y)\rangle_\Omega
=-\langle
 T^+(\tilde f,u)\,,\,P(\cdot,y)
\rangle _{\pO} =VT^+(\tilde f,u), \quad y\in\Omega .\qquad
\end{eqnarray}
Substituting \eqref{Tgen1} into \eqref{4.12} leads to
\eqref{difference}.

Equation \eqref{4.2u+gen} is implied by \eqref{4.2nd}, \eqref{3.8} and \eqref{3.9}.

To prove \eqref{4.2T+gen}, let us first remark that
\begin{equation}\label{LPf}
 A\P\tilde f=\tilde f+{\cal R}_*\tilde f\  \text{in}\ \Omega,
\end{equation}
which implies, due to \eqref{2.6'gen},
$
    A(\P\tilde f-u)={\cal R}_*\tilde f\ \text{in}\ \Omega,
$
where ${\cal R}_*$ is defined by \eqref{R^0} and thus ${\cal R}_*\tilde f\in L_2(\Omega)$. 
Then $A(\P\tilde f-u)$ can be canonically extended (by zero) to 
$\tilde A(\P\tilde f-u)=\mathring E\, {\mathcal R}_*\tilde f\in\s{H}^0(\Omega)\subset\s{H}^{-1}(\Omega)$. 
This implies that there exists a
canonical co-normal derivative of $(\P\tilde f-u)$, for which, due to
\eqref{Tcandef} and \eqref{Ltil}, we have
 \begin{multline*}
 \langle T^+(\P\tilde f-u),w\rangle_{\partial\Omega}=
 \langle \tilde A(\P\tilde f-u)-\check{A}\P\tilde f +\check{A}u,\gamma^{-1}w\rangle_\Omega=
 \langle \mathring E\,{\mathcal R}_*\tilde f-\check{A}\P\tilde f +\check{A}u,\gamma^{-1}w\rangle_\Omega\\
 =\langle \mathring E\,{\mathcal R}_*\tilde f+\tilde f-\tilde f-\check{A}\P\tilde f +\check{A}u,\gamma^{-1}w\rangle_\Omega=
 \langle \tilde f+\mathring E\,{\mathcal R}_*\tilde f-\check{A}\P\tilde f +\check{A}u-\tilde f,\gamma^{-1}w\rangle_\Omega\\
 =\langle T^+(\tilde f+\mathring E\,{\mathcal R}_*\tilde f,\P\tilde f)
 -T^+(\tilde f,u),w\rangle_{\partial\Omega}\quad
 \forall w\in H^\ha(\partial\Omega),
 \end{multline*}
where  $\tilde f+\mathring E\,{\mathcal R}_*\tilde f\in\s{H}^{-1}(\Omega)$ is an
extension of $A\P\tilde f$ associated with \eqref{LPf}. That is,
\be\label{TPf-u}
T^+(\P\tilde f-u)=T^+(\tilde f+\mathring E\,{\mathcal R}_*\tilde f,\P\tilde f) - T^+(\tilde f,u)\ \text{on} \partial\Omega.
\ee
From \eqref{4.2nd} we have ${\cal P}\tilde f-u={\cal R}u - V\Psi +W\Phi$
in $\Omega$. Substituting this in the left hand side of
\eqref{TPf-u} and taking into account jump relation \eqref{3.10}, we
arrive at \eqref{4.2T+gen}
\end{proof} 

\begin{remark}
If $\tilde f\in \widetilde H^{-1/2}(\Omega)\subset \widetilde H^{-1}(\Omega)$, then 
$\tilde f+\mathring E\,{\mathcal R}_*\tilde f\in \widetilde H^{-1/2}(\Omega)$ 
as well, which implies $\tilde f+\mathring E\,{\mathcal R}_*\tilde f=\tilde A{\cal P}\tilde f$ and 
\begin{equation}\label{TfPTP}
T^+(\tilde f+\mathring E\,{\mathcal R}_*\tilde f,{\cal P}\tilde f)=T^+(\tilde A{\cal P}\tilde f,{\cal P}\tilde f)=T^+{\cal P}\tilde f.
\end{equation}
Furthermore, if the hypotheses of Lemma~\ref{IDequivalenceGen} are satisfied, then \eqref{2.6'gen} implies $u\in H^{1,-1/2}(\Omega,A)$ and $ T^+(\tilde f,u)= T^+(\tilde Au,u)= T^+u$.  
Henceforth, \eqref{4.2T+gen} takes the familiar form, cf. \cite[equation (4.5)]{CMN-1},
$$
 T^+u+T^+{\cal R}u -\ha\Psi - {\Wp}\Psi +\L^+ \Phi =
  T^+{\cal P}\tilde f \text{ on } \partial\Omega.
$$
\end{remark}

\begin{remark}
Let $\tilde f\in  \widetilde H^{-1}(\Omega)$ and a sequence $\{\phi_i\}\in \widetilde H^{-1/2}(\Omega)$ converge to $\tilde f$ in  $\widetilde H^{-1}(\Omega)$. By the continuity of operators \eqref{T3.1P1} and \eqref{T3.1P3} in the Appendix, estimate \eqref{estimate} and relation \eqref{TfPTP} for $\phi_i$, we obtain that 
$$
T^+(\tilde f+\mathring E\,{\mathcal R}_*\tilde f,{\cal P}\tilde f)
=\lim_{i\to\infty}T^+(\phi_i+\mathring E\,{\mathcal R}_*\phi_i,{\cal P}\phi_i)
=\lim_{i\to\infty}T^+{\cal P}\phi_i
$$
in $H^{-1/2}(\partial\Omega)$, cf. also Theorem~\ref{Tseq}.
\end {remark}

Lemma \ref{IDequivalenceGen} and the third Green identity \eqref{4.2Gen}
imply the following assertion.
\begin{corollary}\label{IDequivGenDir}
If $u\in H^{1}(\Omega)$ and $\tilde f\in \s{H}^{-1}(\Omega)$ are such that $Au=r_\Omega\tilde f$ in $\Omega$, then
 \begin{align}
 \frac{1}{2}\gamma^+u +\gamma^+{\cal R}u - \V T^+(\tilde f,u)  +\W
 \gamma^+u&=
 \gamma^+{\cal P}\tilde f \mbox{ on }\pO, \qquad\label{4.2u+Gen}\\
 \frac{1}{2}T^+(\tilde f,u) +T^{+}{\cal R}u -
 {\Wp}T^+(\tilde f,u)  +\L^+ \gamma^+u 
 &=T^+(\tilde f+\mathring E\,{\mathcal R}_*\tilde f,{\cal P}\tilde f) \mbox{ on } \pO.\qquad\quad
\label{4.2T+Gen}
\end{align}
\end{corollary}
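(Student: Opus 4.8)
The plan is to obtain \eqref{4.2u+Gen} and \eqref{4.2T+Gen} directly from the generalised third Green identity \eqref{4.2Gen} of Theorem~\ref{3rdGA}(ii) by specialising the indirect relation of Lemma~\ref{IDequivalenceGen} to the particular choice of densities dictated by \eqref{4.2Gen}. First I would observe that under the hypotheses of the corollary — $u\in H^1(\Omega)$ and $\tilde f\in\s H^{-1}(\Omega)$ with $Au=r_\Omega\tilde f$ in $\Omega$ — Theorem~\ref{3rdGA}(ii) applies verbatim, so \eqref{4.2Gen} holds, i.e.
\begin{equation*}
u+{\cal R}u - VT^+(\tilde f,u) +W\gamma^+u={\cal P}\tilde f\quad\text{in }\Omega.
\end{equation*}
This is exactly equation \eqref{4.2nd} with the specific densities $\Psi:=T^+(\tilde f,u)$ and $\Phi:=\gamma^+u$. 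The only thing to check before invoking Lemma~\ref{IDequivalenceGen} is the regularity of these densities: by the estimate \eqref{estimate} we have $T^+(\tilde f,u)\in H^{-\ha}(\pO)$, and by the trace theorem $\gamma^+u\in H^{\ha}(\pO)$, so the hypotheses of the lemma ($\Psi\in H^{-\ha}(\pO)$, $\Phi\in H^{\ha}(\pO)$, $\tilde f\in\s H^{-1}(\Omega)$, and \eqref{4.2nd}) are all met.

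Next I would simply quote the conclusions \eqref{4.2u+gen} and \eqref{4.2T+gen} of Lemma~\ref{IDequivalenceGen} with $\Psi=T^+(\tilde f,u)$ and $\Phi=\gamma^+u$ substituted in. Equation \eqref{4.2u+gen} becomes
\begin{equation*}
\gamma^+u+\gamma^+{\cal R}u - \V T^+(\tilde f,u) -\tfrac12\gamma^+u +\W\gamma^+u = \gamma^+{\cal P}\tilde f\quad\text{on }\partial\Omega,
\end{equation*}
and collecting the two $\gamma^+u$ terms on the left gives precisely \eqref{4.2u+Gen}. Likewise, \eqref{4.2T+gen} becomes
\begin{equation*}
T^+(\tilde f,u)+T^+{\cal R}u -\tfrac12 T^+(\tilde f,u) - {\Wp}T^+(\tilde f,u) +\L^+\gamma^+u = T^+(\tilde f+\mathring E\,{\mathcal R}_*\tilde f,{\cal P}\tilde f)\quad\text{on }\partial\Omega,
\end{equation*}
and combining the two $T^+(\tilde f,u)$ terms yields \eqref{4.2T+Gen}. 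The identity \eqref{difference} from the lemma is automatically satisfied here (both $\Psi-T^+(\tilde f,u)$ and $\Phi-\gamma^+u$ vanish) and is not needed for the corollary.

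Since the corollary is essentially a corollary in the literal sense, there is no serious obstacle; the only point requiring a moment of care is confirming that the two specified densities lie in the correct Sobolev spaces so that Lemma~\ref{IDequivalenceGen} is genuinely applicable — and that is immediate from \eqref{estimate} and the trace theorem. One could alternatively give a self-contained derivation that retraces the proof of Lemma~\ref{IDequivalenceGen} for this special case (taking traces and canonical co-normal derivatives of \eqref{4.2Gen}, using the jump relations \eqref{3.8}--\eqref{3.10} and the identity \eqref{TPf-u}), but invoking the already-proved lemma is cleaner and is the route I would take.
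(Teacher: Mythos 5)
Your proposal is correct and is exactly the paper's argument: the paper states the corollary as an immediate consequence of Lemma~\ref{IDequivalenceGen} applied to the generalised third Green identity \eqref{4.2Gen} with $\Psi=T^+(\tilde f,u)$ and $\Phi=\gamma^+u$, which is precisely your substitution, and the term-collection and the regularity checks ($T^+(\tilde f,u)\in H^{-\ha}(\pO)$ by \eqref{estimate}, $\gamma^+u\in H^{\ha}(\pO)$ by the trace theorem) are carried out correctly.
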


The following statement is well known, see e.g. Lemma
4.2 in \cite{CMN-1} and references therein.
\begin{lemma}\label{VW0s}\quad
\begin{itemize}
\item[(i)]  If $\Psi^*\in {H}^{-\ha}(\pO)$ and
 $r_\Omega V\Psi^*=0$ in $\Omega,$
then $\Psi^*=0$.
 \item[(ii)]  If $\Phi^*\in {H}^{\ha}(\pO)$ and
 $r_\Omega  W\Phi^*=0 $ in $\Omega,$
then $\Phi^*=0$.
\end{itemize}
\end{lemma}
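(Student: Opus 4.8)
The plan is to prove each part by combining the explicit representations of the layer potentials in terms of their Laplace counterparts (formulas \eqref{VWa}, \eqref{DV,DW=0}) with the well-known uniqueness properties of the harmonic single and double layer potentials, together with the jump relations \eqref{3.8}--\eqref{3.9}. Throughout I would use that $a\in C^\infty(\overline\Omega)$ with $a>0$, so multiplication by $a$ or $1/a$ is an isomorphism on the relevant Sobolev spaces and does not affect vanishing.

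For part (i): assume $r_\Omega V\Psi^*=0$ in $\Omega$. By \eqref{VWa}, $V\Psi^*=\frac1a V_\Delta\Psi^*$, and since $a>0$ this forces $r_\Omega V_\Delta\Psi^*=0$ in $\Omega$. The classical harmonic single layer potential $V_\Delta\Psi^*$ is harmonic in $\R^3\setminus\pO$, continuous across $\pO$ (its trace from inside equals its trace from outside, both equal to $\mathcal V_\Delta\Psi^*$), and decays at infinity. Vanishing in $\Omega$ gives zero Dirichlet data on $\pO$ from the exterior side; by uniqueness for the exterior Dirichlet problem for the Laplacian (with the decay condition at infinity in $\R^3$), $V_\Delta\Psi^*=0$ in the exterior domain as well. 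Then $V_\Delta\Psi^*\equiv0$ in all of $\R^3$, and the jump relation for its conormal (here simply normal) derivative, $[\partial_n V_\Delta\Psi^*]^+ -[\partial_n V_\Delta\Psi^*]^- = \Psi^*$ (cf.\ \eqref{3.10} with $a=1$), yields $\Psi^*=0$.

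For part (ii): assume $r_\Omega W\Phi^*=0$ in $\Omega$. By \eqref{VWa}, $W\Phi^*=\frac1a W_\Delta(a\Phi^*)$, so $r_\Omega W_\Delta(a\Phi^*)=0$ in $\Omega$; set $\phi^*:=a\Phi^*\in H^{1/2}(\pO)$, which vanishes iff $\Phi^*$ does. The harmonic double layer potential $W_\Delta\phi^*$ is harmonic in $\R^3\setminus\pO$ and its normal derivative is continuous across $\pO$. Vanishing of $W_\Delta\phi^*$ in $\Omega$ gives, in particular, zero interior Neumann data $[\partial_n W_\Delta\phi^*]^+=0$; by continuity of the normal derivative across $\pO$, also $[\partial_n W_\Delta\phi^*]^-=0$. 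Hence $W_\Delta\phi^*$ solves the exterior Neumann problem with zero data and zero condition at infinity, so $W_\Delta\phi^*=0$ in the exterior too. Now the jump relation for the double layer potential itself, $[W_\Delta\phi^*]^+ - [W_\Delta\phi^*]^- = -\phi^*$ (cf.\ \eqref{3.9} with $a=1$, matching it against the exterior trace), gives $\phi^*=0$, hence $\Phi^*=0$.

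The routine reductions to the constant-coefficient case via \eqref{VWa} and the jump relations are immediate; the substantive ingredient is the uniqueness for the exterior Dirichlet and exterior Neumann problems for $\Delta$ in $\R^3$ under the appropriate decay condition at infinity, which is classical. I do not anticipate a genuine obstacle here — indeed the statement is flagged as ``well known'' — so the only care needed is bookkeeping of the space regularity ($\Psi^*\in H^{-1/2}(\pO)$, $\Phi^*\in H^{1/2}(\pO)$) to ensure the layer potentials and their traces live in the spaces where the jump relations \eqref{3.8}--\eqref{3.10} have been stated, and citing the mapping properties of the Laplace potentials recalled in the Appendix. Since this is a known result, I would in practice simply cite \cite[Lemma 4.2]{CMN-1} and the references therein rather than reproduce the argument in full.
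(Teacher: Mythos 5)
Your argument is correct, but note that the paper itself offers no proof of this lemma: it is flagged as well known and simply cited from \cite[Lemma 4.2]{CMN-1}, which is also the fallback you mention at the end. Judged on its own merits, your sketch is the standard classical proof and it goes through: the reduction via \eqref{VWa} is legitimate since $a>0$ on $\ov\Omega$, the decay rates of $V_\Delta$ ($O(|x|^{-1})$) and $W_\Delta$ ($O(|x|^{-2})$) in $\R^3$ are sufficient for the exterior uniqueness theorems you invoke, and the continuity of the normal derivative of $W_\Delta\phi^*$ across $\pO$ is the Lyapunov--Tauber theorem, which holds in the $H^{1/2}(\pO)$ setting. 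The signs in the jump relations are immaterial since both one-sided limits vanish. One remark: for part (i) you do not need the exterior Dirichlet problem at all --- taking the interior trace via \eqref{3.8} gives $\V_\Delta\Psi^*=0$ on $\pO$, and the invertibility of $\V_\Delta:H^{-\ha}(\pO)\to H^{\ha}(\pO)$ (Theorem~\ref{T3.6} of the Appendix, valid in 3D without the logarithmic-capacity caveat of the planar case) immediately yields $\Psi^*=0$. No analogous shortcut is available for part (ii), since $-\ha I+\W_\Delta$ is only injective modulo nothing here but its invertibility is exactly what one would be re-proving, so your exterior-Neumann route is the natural one there.
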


\begin{theorem}\label{GIequivalenceH-1}
Let $\tilde f\in \s{H}^{-1}(\Omega)$. A function $u\in H^1(\Omega)$ is a
solution of PDE $Au=\tilde f|_\Omega$ in $\Omega$ if and only if it is a
solution of BDIDE (\ref{4.2Gen}).
\end{theorem}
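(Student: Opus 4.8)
The plan is to prove the two directions of the equivalence separately. For the forward (``only if'') direction, assume $u\in H^1(\Omega)$ solves $Au=\tilde f|_\Omega$ in $\Omega$; then Theorem~\ref{3rdGA}(ii) applies directly and yields the generalised third Green identity \eqref{4.2Gen} with $\Psi=T^+(\tilde f,u)$ and $\Phi=\gamma^+u$, so $u$ is a solution of the BDIDE. This direction is essentially immediate once Theorem~\ref{3rdGA}(ii) is in hand.

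For the converse (``if'') direction, suppose $u\in H^1(\Omega)$ satisfies \eqref{4.2Gen}, i.e. \eqref{4.2nd} with the particular choices $\Psi=T^+(\tilde f,u)$ and $\Phi=\gamma^+u$. The key observation is that Lemma~\ref{IDequivalenceGen} applies to any $u\in H^1(\Omega)$, $\Psi\in H^{-\ha}(\pO)$, $\Phi\in H^{\ha}(\pO)$, $\tilde f\in\s{H}^{-1}(\Omega)$ satisfying \eqref{4.2nd}; in particular its conclusion \eqref{2.6'gen}, namely $Au=r_\Omega\tilde f$ in $\Omega$, holds. Thus I would simply invoke Lemma~\ref{IDequivalenceGen} with the above identifications and read off \eqref{2.6'gen}, which is exactly the assertion that $u$ solves the PDE. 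One should check that $T^+(\tilde f,u)\in H^{-\ha}(\pO)$ so that $\Psi$ is admissible in the hypotheses of Lemma~\ref{IDequivalenceGen}; but this is guaranteed by estimate \eqref{estimate}, and $\gamma^+u\in H^{\ha}(\pO)$ by the trace theorem, so both substitutions are legitimate.

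Concretely, the argument for the converse runs: from \eqref{4.2nd} with $\Psi=T^+(\tilde f,u)$, $\Phi=\gamma^+u$, subtract from the third Green identity \eqref{4.G3til} (valid for all $u\in H^1(\Omega)$ by Theorem~\ref{3rdGA}(i)) to obtain
\[
V\bigl(T^+(\tilde f,u)\bigr)(y)-W\bigl(\gamma^+u-\gamma^+u\bigr)(y)={\cal P}[\check Au-\tilde f](y),\quad y\in\Omega,
\]
i.e. $V(T^+(\tilde f,u))={\cal P}[\check Au-\tilde f]$ in $\Omega$; multiply by $a(y)$, apply $\Delta$, and use \eqref{DV,DW=0} and \eqref{DeltaPg} to conclude $r_\Omega\tilde f=r_\Omega\check Au=Au$ in $\Omega$ — which is \eqref{2.6'gen}. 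This reproduces the relevant portion of the proof of Lemma~\ref{IDequivalenceGen}, so one may alternatively just cite that lemma.

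I do not expect a genuine obstacle here, since all the heavy lifting (the generalised third Green identity, the mapping properties of the potentials, and the equivalence Lemma) has already been established. The only point requiring a little care is that the BDIDE \eqref{4.2Gen} is a single equation with $\Psi$ and $\Phi$ fixed in terms of $u$ (not free unknowns), so one must make sure Lemma~\ref{IDequivalenceGen} is being applied with a consistent, admissible choice of data; once that is verified, the equivalence follows by combining Theorem~\ref{3rdGA}(ii) (forward) with Lemma~\ref{IDequivalenceGen}, equation \eqref{2.6'gen} (backward).
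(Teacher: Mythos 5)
Your proposal is correct and follows essentially the same route as the paper: the forward direction is Theorem~\ref{3rdGA}(ii), and the converse is obtained by invoking Lemma~\ref{IDequivalenceGen} with $\Psi=T^+(\tilde f,u)$ and $\Phi=\gamma^+u$ and reading off \eqref{2.6'gen}. The extra detail you supply (re-deriving the relevant step of the lemma and checking admissibility of $\Psi$ and $\Phi$) is consistent with, though more explicit than, the paper's two-line argument.
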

\begin{proof} If $u\in H^1(\Omega)$ solves PDE $Au=\tilde f|_\Omega$ in
$\Omega$, then it satisfies (\ref{4.2Gen}).  On the other hand, if
$u $ solves BDIDE \eqref{4.2Gen}, then using Lemma
\ref{IDequivalenceGen} for $\Psi=T^+(\tilde f,u) $, $\Phi=\gamma^+u$
completes the proof.
\end{proof}
 
\section{Segregated BDIE systems for the Dirichlet problem}

Let us consider the
{\bf Drichlet Problem:} {\em Find a function $u\in H^1(\Omega)$ satisfying equations}
\begin{eqnarray}
\label{2.6} && A\,u=f  \;\;\; \mbox{\rm in}\;\;\;\; \Omega,
\\ 
 \label{2.7} &&  \gamma^+u=\varphi_0  \;\;\; \mbox{\rm on}\;\;\;\; \pO,
\end{eqnarray}
{\em where} $ \varphi_0 \in H^{\ha}(\pO)$,  $ f\in {H}^{-1}(\Omega)$.

Equation \eqref{2.6} is understood in the distributional sense
\eqref{Ldist} and the Dirichlet boundary condition \eqref{2.7} in the trace sense.
The following assertion is well-known and can be proved e.g. using variational settings and the Lax-Milgram lemma.
\begin{theorem}\label{Rem1}
The Dirichlet  problem \eqref{2.6}-\eqref{2.7} is uniquely solvable in ${H}^{1}(\Omega)$. 
The solution is $u=(\mathcal A^D)^{-1}(f,\varphi_0)^\top$,
where the inverse operator, $(\mathcal A^D)^{-1}: H^{\ha}(\pO) \times H^{-1}(\Omega)\to H^{1}(\Omega)$, to the left hand side operator, $\mathcal A^D: H^{1}(\Omega)\to H^{\ha}(\pO) \times H^{-1}(\Omega)$, of the Dirichlet problem \eqref{2.6}-\eqref{2.7}, is continuous.
\end{theorem}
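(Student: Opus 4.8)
The plan is to prove Theorem~\ref{Rem1} by the standard variational argument, treating the case of a general right-hand side $f\in H^{-1}(\Omega)$ and an inhomogeneous Dirichlet datum $\varphi_0\in H^{1/2}(\pO)$ by first reducing to a homogeneous boundary condition. First I would pick the continuous right inverse $\gamma^{-1}:H^{1/2}(\pO)\to H^1(\Omega)$ of the trace operator introduced in Section~2 and set $u_0:=\gamma^{-1}\varphi_0\in H^1(\Omega)$, so that $u$ solves \eqref{2.6}--\eqref{2.7} if and only if $u_*:=u-u_0\in \s H^1(\Omega)$ (i.e.\ $\gamma^+u_*=0$) solves the variational problem
\begin{equation}\label{varform}
\E(u_*,v)=-\langle f,v\rangle_\Omega-\E(u_0,v)\qquad\forall\,v\in\s H^1(\Omega),
\end{equation}
which is just the distributional reading \eqref{Ldist}--\eqref{LH1} of $Au=f$ restricted to test functions in $\s H^1(\Omega)$, the space in which $H^{-1}(\Omega)=[\s H^1(\Omega)]^*$ acts.

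Next I would verify the hypotheses of the Lax--Milgram lemma for the bilinear form $\E$ on $\s H^1(\Omega)$. Boundedness is immediate from $a\in C^\infty(\overline\Omega)$: $|\E(u,v)|\le \|a\|_{C(\overline\Omega)}\,\|\nabla u\|_{L_2}\|\nabla v\|_{L_2}\le C\|u\|_{H^1(\Omega)}\|v\|_{H^1(\Omega)}$. Coercivity follows from $a(x)\ge a_0>0$ on $\overline\Omega$ together with the Friedrichs (Poincar\'e) inequality on $\s H^1(\Omega)$: $\E(v,v)\ge a_0\|\nabla v\|_{L_2(\Omega)}^2\ge c_0\|v\|_{H^1(\Omega)}^2$ for all $v\in\s H^1(\Omega)$, where $c_0>0$ depends only on $a_0$ and $\Omega$. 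The right-hand side of \eqref{varform} is a bounded linear functional on $\s H^1(\Omega)$ with norm controlled by $\|f\|_{H^{-1}(\Omega)}+C\|u_0\|_{H^1(\Omega)}\le \|f\|_{H^{-1}(\Omega)}+C\|\varphi_0\|_{H^{1/2}(\pO)}$, using continuity of $\gamma^{-1}$. Lax--Milgram then yields a unique $u_*\in\s H^1(\Omega)$ solving \eqref{varform}, hence a unique $u=u_*+u_0\in H^1(\Omega)$ solving \eqref{2.6}--\eqref{2.7}, and the a~priori bound $\|u\|_{H^1(\Omega)}\le \|u_*\|_{H^1(\Omega)}+\|u_0\|_{H^1(\Omega)}\le C\big(\|f\|_{H^{-1}(\Omega)}+\|\varphi_0\|_{H^{1/2}(\pO)}\big)$ shows that the linear solution operator $(f,\varphi_0)^\top\mapsto u$ is continuous; this is exactly the asserted continuity of $(\mathcal A^D)^{-1}:H^{1/2}(\pO)\times H^{-1}(\Omega)\to H^1(\Omega)$, and it identifies that map as the two-sided inverse of the evidently bounded operator $\mathcal A^D u=(\gamma^+u,\,Au)^\top$.

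There is no serious obstacle here; the only point requiring a little care is the bookkeeping with the non-unique right inverse $\gamma^{-1}$ and with the two different realisations $A:H^1(\Omega)\to H^{-1}(\Omega)$ versus $\check A:H^1(\Omega)\to\s H^{-1}(\Omega)$ from \eqref{LH1}--\eqref{Ltil}: in \eqref{varform} we test only against $v\in\s H^1(\Omega)$, so it is $A$ (not $\check A$) that is relevant and no choice of extension of $f$ to $\s H^{-1}(\Omega)$ enters, which is why the solution $u$ depends only on $f|_\Omega\in H^{-1}(\Omega)$ and $\varphi_0$. Finally, I would note that the particular solution $u_*$ does not depend on the chosen $\gamma^{-1}$ because any two choices of $u_0$ differ by an element of $\s H^1(\Omega)$, which only shifts $u_*$ by the same element while leaving $u=u_*+u_0$ unchanged, so the statement of the theorem is unambiguous. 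Alternatively, one may simply cite the classical result \cite[e.g.]{McLean2000} that this second-order uniformly elliptic divergence-form operator with smooth positive coefficient defines an isomorphism from $H^1(\Omega)$ onto $H^{1/2}(\pO)\times H^{-1}(\Omega)$.
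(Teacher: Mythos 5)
Your proof is correct and follows exactly the route the paper indicates: the paper gives no written proof of Theorem \ref{Rem1}, stating only that the assertion ``is well-known and can be proved e.g. using variational settings and the Lax--Milgram lemma,'' which is precisely your lifting-plus-Lax--Milgram argument. Your write-up simply supplies the standard details (trace lifting via $\gamma^{-1}$, coercivity from $a\ge a_0>0$ and the Friedrichs inequality on $\s H^1(\Omega)$, and the resulting a priori bound giving continuity of $(\mathcal A^D)^{-1}$) that the paper leaves to the reader.
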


\subsection{BDIE formulations and equivalence to the Dirichlet problem}

Let us consider reduction the Dirichlet problem \eqref{2.6}-\eqref{2.7} with $f\in {H}^{-1}(\Omega)$,  for $u\in H^1(\Omega)$, to two different {\em segregated}  Boundary-Domain Integral Equation (BDIE)
systems.
Corresponding formulations for the
mixed problem for $u\in H^{1,0}(\Omega;\Delta)$ with  $  f\in
L_2(\Omega)$ were introduced and analysed in \cite{CMN-1, CMN-2, MikMMAS2006}.

Let $\tilde f\in \s{H}^{-1}(\Omega)$ be an extension of $f\in {H}^{-1}(\Omega)$ (i.e., $f=r_\Omega\tilde f$), which always exists, see \cite[Lemma 2.15 and Theorem 2.16]{MikJMAA2011}. Let us represent in  \eqref{4.2Gen}, \eqref{4.2u+Gen} and \eqref{4.2T+Gen} the generalised co-normal derivative and the trace of the function $u$ as
$$
T^+(\tilde f,u)=\psi,\qquad \gamma^+u=\varphi_0,
$$
and will regard the new unknown function $\psi\in{H}^{-\frac{1}{2}}(\partial\Omega)$ as formally {\em segregated} of $u$.
Thus we will look for the couple
$
(u, \psi)
\in H^1(\Omega)\times {H}^{-\frac{1}{2}}(\partial\Omega).
$

\paragraph{BDIE system (D1)}
To reduce the Dirichlet BVP \eqref{2.6}-\eqref{2.7} to the BDIE system (D1), we will use equation \eqref{4.2Gen} in $\Omega$ and
equation \eqref{4.2u+Gen} on $\pO$.
Then we
arrive at the following system, (D1), of the boundary-domain integral equations,
\begin{align}
\label{4.5GT1}
 u+{\cal R}u - V\psi =&\F^{D1}_1 \text{ in}\ \Omega,     \\
 \gamma^+{\cal R} u -  {\cal V}\psi =& \F^{D1}_2 
\text{ on}\ \pO, \label{4.5GT2}
\end{align}
where
\begin{equation}\label{FD1}
\F^{D1}=\left[\begin{array}{l}
   \F^{D1}_1\\[1ex]
   \F^{D1}_2
   \end{array}\right]
   =\left[\begin{array}{l}
  F_0^D\\[1ex]
  \gamma^+F_0^D-\varphi_0 
  \end{array}\right]
\text{ and }
 F_0^D:={\cal P}\tilde f -W\varphi_0
\;\;\;\text{in}\ \Omega.
\end{equation}
Note that for $\varphi_0\in H^{\ha}(\pO)$, we have the inclusion
$F_0^D\in H^{1}(\Omega)$ if $\tilde f\in \s{H}^{-1}(\Omega)$ due to the mapping properties of
the Newtonian (volume) and layer potentials, cf.  
(\ref{T3.1P1}), (\ref{WHs1}).
%

\paragraph{BDIE system (D2)}
To obtain a segregated BDIE system of {\em the second kind}, (D2), we
will use equation \eqref{4.2Gen} in $\Omega$ and  equation
\eqref{4.2T+Gen} on $\pO$
Then we arrive at the following BDIE system (D2),
\begin{eqnarray}
\label{2.44} u+{\cal R}u - V\psi  =\F^{D2}_1 &\text{in}&
  \Omega,   \\
\label{2.45} \frac{1}{2}\,\psi+T^+{\cal R}u  - {\Wp}\psi =
\F^{D2}_2&\text{on}& \pO,
\end{eqnarray}
where
\begin{equation}\label{FD2}
\F^{D2}=\left[\begin{array}{l}
   \F^{D2}_1\\[1ex]
   \F^{D2}_2
   \end{array}\right]
   =\left[\begin{array}{l}
  {\cal P}\tilde f-W\varphi_0\\[1ex]
  T^+(\tilde f+\mathring E\,{\mathcal R}_*\tilde f,{\cal P}\tilde f)-\L^+ \varphi_0
  \end{array}\right].
\end{equation}
Due to the mapping properties of the operators involved in
\eqref{FD2} we have $\F^{D2}\in H^{1}(\Omega )\times H^{-\ha}(\pO)$.

Let us prove that BVP \eqref{2.6}--\eqref{2.7} in $\Omega$ is
equivalent to both systems of BDIEs, (D1) and (D2).

\begin{theorem}
\label{Deqin}
Let 
$ \varphi_0 \in H^{\frac{1}{2}}(\partial\Omega)$,
$f\in  H^{-1}(\Omega)$, and   $\tilde f\in  \widetilde H^{-1}(\Omega)$ is such that $r_{_\Omega}\tilde f=f$.
%
 \begin{enumerate}
 \item[\rm (i)]
 If a function $u\in {H}^{1}(\Omega)$ solves the Dirichlet BVP \eqref{2.6}--\eqref{2.7}, then the couple $(u, \psi)\in H^1 (\Omega)\times { H}^{-\ha}(\pO)$,
    where 
    \begin{equation}
    \label{upsiphiGT}
    \psi=T^+ (\tilde f,u) \;\;\;\;\text{on}\;\;\;\; \pO, \qquad
    \end{equation}
      solves the BDIE systems (D1) and (D2).
 \item[\rm (ii)]
 If a couple $(u, \psi)\in {H}^{1}(\Omega)\times {H}^{-\frac{1}{2}}(\partial\Omega)$ solves one of the BDIE systems, (D1) or (D2), then this solution is unique and solves the other system, while $u$ solves the Dirichlet BVP, and $\psi$ satisfies (\ref{upsiphiGT}).
 \end{enumerate}
 \end{theorem}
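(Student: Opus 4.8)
The plan is to derive part (i) directly from the generalised third Green identity \eqref{4.2Gen} and its boundary forms in Corollary~\ref{IDequivGenDir}, and to derive part (ii) by feeding the domain equation of each BDIE system into Lemma~\ref{IDequivalenceGen}, then exploiting Lemma~\ref{VW0s} and the unique solvability of the Dirichlet problem (Theorem~\ref{Rem1}).

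For (i), suppose $u\in H^1(\Omega)$ solves \eqref{2.6}--\eqref{2.7}. Then $Au=f=r_\Omega\tilde f$ in $\Omega$, so $\psi:=T^+(\tilde f,u)$ is well defined and belongs to $H^{-\ha}(\pO)$ by \eqref{estimate}, and \eqref{4.2Gen} and Corollary~\ref{IDequivGenDir} apply. Substituting $\psi=T^+(\tilde f,u)$ and $\gamma^+u=\varphi_0$ into \eqref{4.2Gen}, \eqref{4.2u+Gen}, \eqref{4.2T+Gen}, rewriting the boundary terms with the jump relation \eqref{3.9}, $[W\varphi_0]^+=-\ha\varphi_0+\mathcal{W}\varphi_0$, and using the definitions of $F_0^D$, $\mathcal F^{D1}$, $\mathcal F^{D2}$, one sees that these three identities become exactly \eqref{4.5GT1}--\eqref{4.5GT2} (system (D1)) and \eqref{2.44}--\eqref{2.45} (system (D2)). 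This step is pure bookkeeping.

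For (ii), first take a solution $(u,\psi)$ of (D1). Rewriting \eqref{4.5GT1} as $u+\mathcal{R}u-V\psi+W\varphi_0=\mathcal{P}\tilde f$ in $\Omega$ puts it in the form \eqref{4.2nd} with $\Psi=\psi$, $\Phi=\varphi_0$, so Lemma~\ref{IDequivalenceGen} gives the PDE $Au=r_\Omega\tilde f=f$ in $\Omega$, the relation \eqref{difference}, and the boundary identity \eqref{4.2u+gen}. Substituting \eqref{4.5GT2} into \eqref{4.2u+gen} (after expanding $\gamma^+F_0^D$ via \eqref{3.9}) cancels every volume-potential trace and leaves $\gamma^+u=\varphi_0$, i.e.\ the Dirichlet condition. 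With $\gamma^+u=\varphi_0$ the $W$-term in \eqref{difference} disappears, leaving $r_\Omega V\bigl(\psi-T^+(\tilde f,u)\bigr)=0$, so Lemma~\ref{VW0s}(i) yields $\psi=T^+(\tilde f,u)$, which is \eqref{upsiphiGT}. Hence $u$ solves the Dirichlet BVP and, by part (i), $(u,\psi)$ solves (D2) as well. For a solution of (D2) I would run the same argument in the reverse order: \eqref{2.44} is again \eqref{4.2nd}, so Lemma~\ref{IDequivalenceGen} delivers the PDE, \eqref{difference} and the boundary identity \eqref{4.2T+gen}; comparing \eqref{4.2T+gen} with \eqref{2.45} forces $\psi=T^+(\tilde f,u)$, whereupon \eqref{difference} collapses to $r_\Omega W(\varphi_0-\gamma^+u)=0$ and Lemma~\ref{VW0s}(ii) gives $\gamma^+u=\varphi_0$. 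Again $u$ solves the Dirichlet BVP and, by (i), $(u,\psi)$ solves (D1). Uniqueness is then immediate: for either system any solution $(u,\psi)$ has $u$ solving the Dirichlet problem, which has a unique solution by Theorem~\ref{Rem1}, and $\psi=T^+(\tilde f,u)$ is thereby determined uniquely.

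The main obstacle I anticipate is the bookkeeping in part (ii): one must track which boundary equation of the system carries the information to be recovered. For (D1) the first-kind boundary equation \eqref{4.5GT2}, combined with \eqref{4.2u+gen} and the jump relation for $W$, delivers the Dirichlet condition, after which \eqref{difference} and Lemma~\ref{VW0s}(i) identify $\psi$; for (D2) the second-kind boundary equation \eqref{2.45} first pins down $\psi$, and only then do \eqref{difference} and Lemma~\ref{VW0s}(ii) give the Dirichlet condition. Keeping the signs in the jump relations and the numerical factors in $\mathcal F^{D1}$ and $\mathcal F^{D2}$ consistent is the one place where genuine care is needed; everything else follows mechanically from the results already established.
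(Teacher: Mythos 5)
Your proposal is correct and follows essentially the same route as the paper: part (i) by substitution into the generalised third Green identity and its trace/co-normal-derivative forms, and part (ii) by feeding the domain equation into Lemma~\ref{IDequivalenceGen} and then invoking Lemma~\ref{VW0s} in the appropriate order for each system. The only cosmetic differences are that the paper obtains $\gamma^+u=\varphi_0$ for (D1) by taking the trace of \eqref{4.5GT1} and subtracting \eqref{4.5GT2} directly (the same computation you route through \eqref{4.2u+gen}), and that it phrases uniqueness via the homogeneous BDIE system rather than your equally valid direct determination of $(u,\psi)$ from the uniquely solvable BVP.
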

 \begin{proof} 
(i) Let $u\in H^1(\Omega)$ be a solution to BVP
\eqref{2.6}--\eqref{2.7}. It is unique due to Theorem \ref{Rem1}.
Setting $\psi$  by \eqref{upsiphiGT} evidently implies $\psi\in
{H}^{-\ha}(\pO)$. Then it immediately follows from Theorem~\ref{GIequivalenceH-1} and relations
 \eqref{4.2u+Gen} and \eqref{4.2T+Gen} that the couple $(u,\psi)$ solves
systems (D1) and (D2) with the right
hand sides \eqref{FD1} and \eqref{FD2}, respectively,  which completes the proof of
item (i).

(ii) Let now a couple $(u,\psi)\in H^1(\Omega)\times { H}^{-\ha}(\pO)$
solve BDIE system \eqref{4.5GT1}-\eqref{4.5GT2}. Taking trace of
equation \eqref{4.5GT1} on $\pO$  and subtracting equation
\eqref{4.5GT2} from it, we obtain,
\begin{equation}\label{DGT}
     \gamma^+u(y)=\varphi_0(y),\quad y\in \pO,
\end{equation}
i.e. $u$ satisfies the Dirichlet condition \eqref{2.7}.

Equation \eqref{4.5GT1} and Lemma \ref{IDequivalenceGen} with
$\Psi=\psi$, $\Phi=\varphi_0$ imply that $u$ is a solution of PDE
\eqref{2.6} and
 $$
V\Psi^* (y)- W\Phi^* (y)=0,\quad y\in \Omega,
$$
where $\Psi^*=\psi -T^+ (\tilde f,u) $ and $\Phi^*=\varphi_0 - \gamma^+u$. Due to equation \eqref{DGT}, $\Phi^*=0$. Then Lemma \ref{VW0s}(i)
implies $\Psi^*=0$, which completes the proof of condition
\eqref{upsiphiGT}.
Thus $u$ obtained from solution of BDIE system (D1) solves the Dirichlet problem and hence, by item (i) of the theorem, $(u,\psi)$ solve also BDIE system (D2).  

Due to \eqref{FD1}, the BDIE system
\eqref{4.5GT1}-\eqref{4.5GT2} with zero right hand side can be considered as obtained for $\tilde f=0$, $\varphi_0=0$, implying that its solution is given by a solution of the homogeneous BVP \eqref{2.6}--\eqref{2.7}, which is zero by Theorem \ref{Rem1}. This implies uniqueness of solution of the inhomogeneous BDIE system
\eqref{4.5GT1}-\eqref{4.5GT2}.

 Let now a couple $(u,\psi)\in H^{1}(\Omega)\times H ^{-\ha}(\pO)$
solves BDIE system \eqref{2.44}-\eqref{2.45}.
Lemma \ref{IDequivalenceGen} for equation \eqref{2.44} implies that
$u$ is a solution of equation \eqref{2.1}, and equations
\eqref{difference} and \eqref{4.2T+gen} hold for $\Psi=\psi$ and
$\Phi=\varphi_0$. Subtracting \eqref{4.2T+gen} from equation
\eqref{2.45} gives
\begin{equation}
 \Psi^* :=\psi -T^+ (\tilde f,u) =0\;\;\;\;\text{on}\;\;\;\;
 \pO,\label{PsiGen}
\end{equation}
that is, equation (\ref{upsiphiGT}) is proved.

Equations \eqref{difference} and \eqref{PsiGen} give
 $ W\Phi^* =0$ in $\Omega$,
 where
$\Phi^*=\varphi_0 - \gamma^+u$. Then Lemma \ref{VW0s}(ii) implies
$\Phi^*=0$ on $\pO$. This means that $u$ satisfies the Dirichlet
condition \eqref{2.7}.
Thus $u$ obtained from solution of BDIE system (D2) solves the Dirichlet problem and hence, by item (i) of the theorem, $(u,\psi)$ solve also BDIE system (D1).  

Due to \eqref{FD2}, the BDIE system
\eqref{2.44}-\eqref{2.45} with zero right hand side can be considered as obtained for $\tilde f=0$, $\varphi_0=0$, implying that its solution is given by a solution of the homogeneous BVP \eqref{2.6}--\eqref{2.7}, which is zero by Theorem \ref{Rem1}. This implies uniqueness of solution of the inhomogeneous BDIE system
\eqref{2.44}-\eqref{2.45}.
\end{proof} 

\begin{remark}
For a given function $f\in  H^{-1}(\Omega)$, its extension  $\tilde f\in  \widetilde H^{-1}(\Omega)$ is not unique. Nevertheless, since solution of the Dirichlet BVP \eqref{2.6}--\eqref{2.7} does not depend on this extension, equivalence Theorem~\ref{Deqin}(ii) implies that $u$ in the solution of BDIE systems (D1) and (D2) does not depend on the particular  choice of extension $\tilde f$. However, $\psi$ does obviously depends on the choice of $\tilde f$, see \eqref{upsiphiGT}.
\end{remark}

\subsection{BDIE system operators invertibility, for the Dirichlet problem}

BDIE systems (D1) and (D2) can be written as
\begin{equation*}
  \mathfrak{D}^1\U=\F^{D1} \text{ and }\  \mathfrak{D}^2\U=\F^{D2},
\end{equation*}
respectively. Here $\U^D:=(u, \psi)^\top\in H^1(\Omega)\times { H}^{-\ha}(\pO)$,
\begin{equation}\label{D1-D2}
\mathfrak{D}^1:= \left[
\begin{array}{cc}
I-{\cal R} & -V  \\
\gamma^+{\cal R}  & -{\cal V}
\end{array}
\right],\quad 
   \mathfrak{D}^2:= \left[
 \begin{array}{ccc}
 I+{\cal R} & -V  \\[1ex]
   T^+{\cal R} & \displaystyle \,\frac{1}{2}\,I- {\Wp}
 \end{array}
 \right],
\end{equation}
while $\F^{D1}$ and $\F^{D2}$ are given by \eqref{FD1} and \eqref{FD2}.

Due to the mapping properties of the operators participating in definitions of the operators $\mathfrak{D}^1$ and $\mathfrak{D}^2$ as well as the right hand sides $\F^{D1}$ and $\F^{D2}$ (see \cite{CMN-1, MikMMAS2006} and
the Appendix), we have 
$\F^{D1}\in H^1(\Omega)\times H^{\ha}(\pO)$, $\F^{D2}\in H^1(\Omega)\times H^{-\ha}(\pO)$, 
while the operators
  \begin{eqnarray}
\mathfrak{D}^1 
&:& H^1 (\Omega)\times { H}^{-\ha}(\pO)\to H^1 (\Omega)\times H^{\ha}(\pO), \label{AGcGcont}\\
\mathfrak{D}^2
&:& H^1 (\Omega)\times { H}^{-\ha}(\pO)\to H^1 (\Omega)\times H^{-\ha}(\pO)\label{AGcTcont}
  \end{eqnarray}
are continuous. 
Due to Theorem \ref{Deqin}(ii), operator \eqref{AGcGcont} and \eqref{AGcTcont} are injective.

\begin{theorem}\label{AGTinvs}
Operators \eqref{AGcGcont} and \eqref{AGcTcont} are continuous and
continuously invertible.
\end{theorem}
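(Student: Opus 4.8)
The plan is to prove invertibility of the two BDIE operators by combining the already-established injectivity (from the equivalence Theorem~\ref{Deqin}(ii)) with surjectivity, and the most efficient route to surjectivity is to show that each operator is a compact (finite-order) perturbation of an invertible operator, hence Fredholm of index zero, and then conclude by injectivity. The model invertible operators come from the $a=1$ (Laplace) case together with the mapping properties collected in the Appendix.

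First I would treat $\mathfrak{D}^2$, whose second equation is of the second kind. Using representations \eqref{VWab1}--\eqref{VWab4} relating $\mathcal R$, $V$, $T^+\mathcal R$ and $\mathcal W'$ to their $\Delta$-counterparts, one sees that $\mathcal R:H^1(\Omega)\to H^1(\Omega)$, $V:H^{-\ha}(\pO)\to H^1(\Omega)$, and $T^+\mathcal R:H^1(\Omega)\to H^{-\ha}(\pO)$ are compact (they gain smoothness — $\mathcal R$ maps into $H^2$ modulo the $\nabla a$ factor, and likewise for the others), so that
\[
\mathfrak{D}^2=\left[\begin{array}{cc} I & 0\\[1ex] 0 & \ha I-\mathcal W'_\Delta\end{array}\right]+\mathfrak{D}^2_c
\]
with $\mathfrak{D}^2_c$ compact, once we also absorb the compact difference $\mathcal W'-\mathcal W'_\Delta$. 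The diagonal operator is block-triangular with invertible diagonal blocks: $I$ on $H^1(\Omega)$ is trivially invertible, and $\ha I-\mathcal W'_\Delta:H^{-\ha}(\pO)\to H^{-\ha}(\pO)$ is invertible because this is the classical interior Neumann operator for $-\Delta$, whose invertibility on $H^{-\ha}(\pO)$ is standard. Hence operator \eqref{AGcTcont} is Fredholm with zero index; combined with injectivity from Theorem~\ref{Deqin}(ii), it is an isomorphism, and the bounded inverse theorem gives continuity of the inverse.

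For $\mathfrak{D}^1$ the second equation $\gamma^+\mathcal R u-\mathcal V\psi=\F^{D1}_2$ is of the first kind, so I would not seek a second-kind form but instead compare directly with the Laplace case. Write $\mathfrak{D}^1$ as a perturbation of
\[
\mathfrak{D}^1_0:=\left[\begin{array}{cc} I & -V_\Delta\\[1ex] 0 & -\mathcal V_\Delta\end{array}\right]
\]
(where I use $V_\Delta$, $\mathcal V_\Delta$ for the $a=1$ potentials, noting $Vg=\frac1a V_\Delta g$, $\mathcal Vg=\frac1a\mathcal V_\Delta g$ so the $\frac1a$-factors only contribute bounded isomorphisms of the target spaces). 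The difference $\mathfrak{D}^1-\mathfrak{D}^1_0$ involves $\mathcal R$ and $\gamma^+\mathcal R$, which are compact as above, so $\mathfrak{D}^1$ is a compact perturbation of $\mathfrak{D}^1_0$ provided $\mathfrak{D}^1_0$ is invertible. Invertibility of $\mathfrak{D}^1_0$ is itself seen by triangular structure: $-\mathcal V_\Delta:H^{-\ha}(\pO)\to H^{\ha}(\pO)$ is the classical single-layer boundary operator for $-\Delta$, which is an isomorphism (positive definite, no zero eigenvalue in 3D), and then the first block-row recovers $u$ from $\psi$ and the first component of the data. Alternatively, and perhaps more cleanly, I would argue Fredholmness of \eqref{AGcGcont} by reducing it via Theorem~\ref{Deqin} and Theorem~\ref{Rem1}: the operator $\mathcal A^D$ is an isomorphism, and the BDIE system is equivalent to it through the explicitly bounded maps $(u,\psi)\mapsto u$ and the right-hand side assembly \eqref{FD1}, so surjectivity of $\mathfrak{D}^1$ follows from surjectivity of $\mathcal A^D$ once we check that for every right-hand side in $H^1(\Omega)\times H^{\ha}(\pO)$ there is a preimage — which amounts to showing that an arbitrary such pair has the special form \eqref{FD1} for suitable $(\tilde f,\varphi_0)$, a cohomology-type surjectivity check.

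The main obstacle is the surjectivity/Fredholm step for $\mathfrak{D}^1$: the first-kind second equation forces one to identify the right model operator and verify that the remainder is genuinely compact in the correct spaces, and in particular to handle the slight mismatch between the natural smoothing of $\mathcal R$ and the anisotropy introduced by the factor $\nabla a$ in \eqref{3.4} and \eqref{PRdef}. A careful invocation of the Appendix mapping properties (continuity of $\mathcal R:H^s(\Omega)\to H^{s+1}(\Omega)$-type statements and the jump relations) is what makes this rigorous; once compactness is in hand, the Fredholm alternative plus the already-proven injectivity closes both cases, and continuity of the inverses is automatic by the open mapping theorem.
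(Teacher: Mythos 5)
Your overall strategy coincides with the paper's: write each BDIE operator as a compact perturbation of an invertible (upper-triangular) model operator, conclude that it is Fredholm of index zero, and finish with the injectivity already supplied by Theorem~\ref{Deqin}(ii) together with the bounded inverse theorem. However, two of your compactness claims are false as stated, and they break the specific decompositions you propose. First, $V:H^{-\ha}(\pO)\to H^1(\Omega)$ is \emph{not} compact: by \eqref{VHs1} the single layer potential maps $H^{s-3/2}(\pO)$ continuously to $H^{s}(\Omega)$ and this order is sharp, so there is no extra smoothing with which to invoke Rellich. Hence your diagonal model for $\mathfrak{D}^2$ leaves the non-compact entry $-V$ inside the alleged compact remainder $\mathfrak{D}^2_c$. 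Second, for $\mathfrak{D}^1$ the differences $-V+V_\Delta=(1-\tfrac1a)V_\Delta$ and $-\mathcal V+\mathcal V_\Delta=(1-\tfrac1a)\mathcal V_\Delta$ are likewise not compact (multiplication by the smooth non-constant function $1-1/a$ gains no smoothness), so $\mathfrak{D}^1-\mathfrak{D}^1_0$ with your Laplace-based model is not compact either.

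Both defects are repaired exactly as in the paper: keep the \emph{variable-coefficient} potentials in the model operators, taking for $\mathfrak{D}^1_0$ the upper-triangular operator with diagonal $I$, $-\mathcal V$ and off-diagonal entry $-V$, and for $\mathfrak{D}^2_0$ the one with diagonal $I$, $\ha I$ and off-diagonal entry $-V$. These are still triangular with invertible diagonal blocks ($\mathcal V=\frac1a\mathcal V_\Delta:H^{-\ha}(\pO)\to H^{\ha}(\pO)$ is invertible by Theorem~\ref{T3.6}, and $\ha I$ trivially so), while the remainders now involve only $\mathcal R$, $\gamma^+\mathcal R$, $T^+\mathcal R$ and $\Wp$, all of which are genuinely compact in the relevant spaces by Corollary~\ref{B.3} and Theorem~\ref{T3.3} (note that $\Wp:H^{-\ha}(\pO)\to H^{\ha}(\pO)$ is continuous, hence compact into $H^{-\ha}(\pO)$, so you never need the invertibility of $\ha I-\Wp_\Delta$; incidentally, that operator is the negative of the \emph{exterior}, not the interior, Neumann operator, though it is indeed invertible). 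Finally, your alternative surjectivity route for $\mathfrak{D}^1$ via $\mathcal A^D$ leaves unproved precisely the nontrivial step, namely that every pair in $H^1(\Omega)\times H^{\ha}(\pO)$ can be written in the form \eqref{FD1}; a statement of that kind is needed in the paper only for the Neumann systems, where it is the content of Lemma~\ref{L4} and requires real work, so it should not be presented as a routine check.
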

\begin{proof} 
The continuity is proved above. 
To prove the invertibility of operator \eqref{AGcGcont}, let us consider the  operator
\begin{equation*}
\mathfrak{D}^1_0:= \left[
\begin{array}{cc}
I &        -V            \\
 0 & \,-{\cal V}
\end{array}
\right]\ .
\end{equation*}
As a result of compactness properties of the operators ${\cal R}$
and $\gamma^+{\cal R}$ (see Corollary \ref{B.3} in the Appendix), the operator
$\mathfrak{D}^1_0$ is a compact perturbation of operator \eqref{AGcGcont}.
The operator $\mathfrak{D}^1_0$ is an upper triangular matrix operator
with the following scalar diagonal invertible operators
\begin{eqnarray*}
I&\;:\;& H^1 (\Omega)\to H^1 (\Omega),\\
{\cal V} & \;:\;&   H^{-\ha}(\pO)\to H^{\ha}(\pO),
\end{eqnarray*}
cf. \cite[ Ch. XI, Part B, \S 2, Theorem 3]{DaLi4} for $\V$. This
implies that
$$
\mathfrak{D}^1_0 \;:\; H^1 (\Omega)\times { H}^{-\ha}(\pO) \to H^1
(\Omega)\times { H}^{\ha}(\pO)
$$
is an invertible  operator. Thus   \eqref{AGcGcont} is a Fredholm operator with zero index. 
The injectivity of
operator \eqref{AGcGcont}, which is already proved, completes the theorem
proof for operator \eqref{AGcGcont}.

 The operator
$$
  \mathfrak{D}^2_0:= \left[
\begin{array}{ccc}
I  & -V  \\
0  & \displaystyle \frac{1}{2}\,I
\end{array}
\right].
$$
is a compact perturbation of  operator
\eqref{AGcTcont} due to compactness properties of the operators
$\cal R$ and $\W$, see \cite{CMN-1, CMN-2, MikMMAS2006} and Corollary
\ref{B.3} from the Appendix. The invertibility of operator \eqref{AGcTcont} then follows by the arguments
similar to those for operator \eqref{AGcGcont}.
\end{proof}

\section{Segregated BDIE systems for the Neumann Problem}
Let us consider the 
{\bf Neumann Problem:} {\em Find a function $u\in H^1(\Omega)$ satisfying equations}
\begin{align}
\label{2.6N} & A\,u=r_\Omega\tilde f  \;\;\; \mbox{\rm in}\;\;\;\; \Omega,
\\ 
 \label{2.8} &  T^+(\tilde f,u)=\psi_0  \;\;\; \mbox{\rm on}\;\;\;\; \pO,
\end{align}
{\em where} $ \psi_0 \in H^{-\ha}(\pO)$,  $\tilde f\in \widetilde{H}^{-1}(\Omega)$. 

Equation \eqref{2.6N} is understood in the distributional sense
\eqref{Ldist} and Neumann boundary condition \eqref{2.8} in the weak sense \eqref{Tgen}.
The following assertion is well-known and can be proved e.g. using variational settings and the Lax-Milgram lemma.
\begin{theorem}\label{Rem1N} \

 (i) The Neumann homogeneous problem, associated with \eqref{2.6N}-\eqref{2.8}, admits only one linearly independent solution $u^0=1$ in ${H}^{1}(\Omega)$.
 
 (ii) The non-homogeneous Neumann problem \eqref{2.6N}-\eqref{2.8} is solvable if only if the following solvability condition is satisfied
\be\label{3.suf}
\langle \tilde f,u^0\rangle_\Omega -\langle\psi_0,\gamma^+u^0\rangle_\pO=0.
\ee
\end{theorem}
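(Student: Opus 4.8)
The plan is to prove Theorem~\ref{Rem1N} by exploiting the variational (weak) formulation of the Neumann problem together with the first Green identity \eqref{Tgen} and the well-known Lax--Milgram/Fredholm theory on the quotient space $H^1(\Omega)/\mathbb{R}$. First I would record the weak formulation: $u\in H^1(\Omega)$ solves \eqref{2.6N}--\eqref{2.8} if and only if
\begin{equation*}
\E(u,v)=\langle\psi_0,\gamma^+v\rangle_\pO-\langle\tilde f,v\rangle_\Omega\qquad\forall\,v\in H^1(\Omega),
\end{equation*}
which follows directly from the first Green identity in the form \eqref{Tgen} by moving $\E(u,v)$ to one side; conversely, testing with $v\in\mathcal D(\Omega)$ recovers $Au=r_\Omega\tilde f$ in the distributional sense \eqref{Ldist}, and then testing with general $v\in H^1(\Omega)$ and using \eqref{Tgen} again identifies $T^+(\tilde f,u)=\psi_0$ on $\pO$.

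For part (i), I would observe that the homogeneous problem corresponds to $\psi_0=0$, $\tilde f=0$, i.e.\ $\E(u,v)=0$ for all $v\in H^1(\Omega)$; taking $v=u$ gives $\int_\Omega a|\nabla u|^2\,dx=0$, and since $a(x)>0$ on $\overline\Omega$ this forces $\nabla u=0$, hence $u$ is constant on the connected set $\Omega$. Thus $u^0=1$ spans the one-dimensional solution space. For part (ii), the necessity of \eqref{3.suf} is immediate: if $u$ solves the problem, take $v=u^0=1$ in the weak formulation; then $\E(u,u^0)=\int_\Omega a\,\nabla u\cdot\nabla 1\,dx=0$, so the right-hand side must vanish, which is exactly $\langle\psi_0,\gamma^+u^0\rangle_\pO-\langle\tilde f,u^0\rangle_\Omega=0$. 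For sufficiency, I would pass to the quotient space $\dot H^1(\Omega):=H^1(\Omega)/\mathbb{R}$, on which $\E(\cdot,\cdot)$ is coercive by the Poincar\'e--Wirtinger inequality (again using $a>0$ and smoothness, so $0<a_{\min}\le a\le a_{\max}$), and on which the linear functional $v\mapsto\langle\psi_0,\gamma^+v\rangle_\pO-\langle\tilde f,v\rangle_\Omega$ is well defined precisely when \eqref{3.suf} holds (since it annihilates constants). Lax--Milgram on $\dot H^1(\Omega)$ then yields a solution, unique up to an additive constant.

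The step requiring the most care is the rigorous justification that the relevant duality pairings make sense and that the identification $T^+(\tilde f,u)=\psi_0$ is legitimate given that $\tilde f\in\widetilde H^{-1}(\Omega)$ is only an extension of $f\in H^{-1}(\Omega)$ and the generalised co-normal derivative is a nonlinear, extension-dependent object. Here I would lean on Definition~\ref{GCDd}, estimate \eqref{estimate}, and the first Green identity \eqref{Tgen}, which were all established earlier precisely to handle this general right-hand side; in particular \eqref{Tgen} shows that once $Au=r_\Omega\tilde f$ the pairing $\langle T^+(\tilde f,u),\gamma^+v\rangle_\pO$ equals $\langle\tilde f,v\rangle_\Omega+\E(u,v)$ for all $v\in H^1(\Omega)$, so matching this against $\langle\psi_0,\gamma^+v\rangle_\pO$ and using the surjectivity of $\gamma^+:H^1(\Omega)\to H^{1/2}(\pO)$ gives $\psi_0=T^+(\tilde f,u)$. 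Since the statement is attributed as ``well-known'' and the paper explicitly permits proving it via variational settings and Lax--Milgram, I would keep the argument brief, citing \eqref{Tgen} and the Poincar\'e inequality rather than reproving them.
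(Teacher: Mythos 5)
Your proposal is correct and follows exactly the route the paper itself indicates: the paper gives no written proof of Theorem~\ref{Rem1N}, stating only that it is well-known and provable ``using variational settings and the Lax--Milgram lemma,'' which is precisely the weak-formulation/quotient-space argument you carry out. Your careful handling of the equivalence between the BVP and the variational identity via \eqref{Tgen} and the surjectivity of the trace operator fills in the details the paper omits, and no gaps remain.
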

\subsection{BDIE formulations and equivalence to the Neumann problem}

We will explore different possibilities of
reducing the Neumann problem \eqref{2.6N}-\eqref{2.8} to a BDIE system. 
Let us represent in  \eqref{4.2Gen}, \eqref{4.2u+Gen} and \eqref{4.2T+Gen} the generalised co-normal derivative and the trace of the function $u$ as
$$
T^+(\tilde f,u)=\psi_0,\qquad \gamma^+u=\varphi,
$$
and will regard the new unknown function $\varphi\in{H}^{\frac{1}{2}}(\partial\Omega)$ as formally {\em segregated} of $u$.
Thus we will look for the couple
$
(u, \varphi)
\in H^1(\Omega)\times {H}^{\frac{1}{2}}(\partial\Omega).
$
\paragraph{BDIE system (N1)}
First, using equation \eqref{4.2Gen} in $\Omega$ and
equation \eqref{4.2T+Gen} on $\partial\Omega$,  we arrive
at the following BDIE system (N1) of two equations for the couple of unknowns,
$(u, \varphi)$,
\begin{eqnarray}
\label{Basic1}
 u +{\mathcal R}u  +W\varphi &=& \F^{N1}_1\quad \mbox{in}\quad   \Omega,
\\[1ex]
 \label{Basic2}
  T^+{\mathcal R} u +\mathcal L^+\varphi
  &=&
  \F^{N1}_2 \quad  \mbox{on}\quad  \partial\Omega,
\end{eqnarray}
where
\begin{equation}\label{FN1}
\F^{N1}=\left[\begin{array}{l}
  \F^{N1}_1\\[1ex]
  \F^{N1}_2
  \end{array}\right]
=\left[\begin{array}{l}
  {\cal P}\tilde f + V\psi_0\\[1ex]
  T^+(\tilde f+\mathring E\,{\mathcal R}_*\tilde f,{\cal P}\tilde f)-\dfrac{1}{2}\psi_0+{\Wp}\psi_0
  \end{array}\right].
\end{equation}
Due to the mapping properties of the operators involved in
\eqref{FN1} we have $\F^{N1}\in H^{1}(\Omega )\times H^{-\ha}(\pO)$.

\paragraph{BDIE system (N2)}
If we use equation \eqref{4.2Gen} in $\Omega$ and equation \eqref{4.2u+Gen} on $\partial\Omega$, we arrive for the couple $(u,  \varphi)$ at the following  BDIE system (N2) of two equations of the second kind,
\begin{eqnarray}
\label{Basic1-2}
 u +{\mathcal R}u     +W  \varphi
 &=& \mathcal F^{N2}_1  \qquad \mbox{in}\quad   \Omega,
\\[1ex]
 \label{Basic2-2}
 \frac{1}{2}\varphi+\gamma^+{\mathcal R} u   +{\mathcal W} \varphi
 &=&    \mathcal F^{N2}_2,  \quad   \mbox{on}\quad  \partial\Omega.
\end{eqnarray}
where 
\begin{align}
 \label{FN2}
\mathcal F^{N2}=\left[\begin{array}{l}
   \F^{N2}_1\\[1ex]
   \F^{N2}_2
   \end{array}\right]
   =\left[\begin{array}{l}
      F_0^N\\[1ex]
      \gamma^+F_0^N
      \end{array}\right],\quad
F_0^N:={\mathcal P} \tilde f + V\psi_0  \quad   \mbox{in}\quad   \Omega.
 \end{align}
Due to the mapping properties of the operators involved in
\eqref{FN2}, we have $\F^{N2}\in H^{1}(\Omega )\times H^{\ha}(\pO)$.

\begin{theorem}\label{equivalenceN}
Let   $\psi_0 \in
H^{-\frac{1}{2}}({\partial\Omega}) $ and $\tilde f\in \widetilde H^{-1}(\Omega )$.

(i) If a function $u\in H^1(\Omega )$ solves the Neumann  problem
 \eqref{2.6N}-\eqref{2.8} then the couple $(u,\varphi)$ with
$\varphi=\gamma^+u\in H^{\frac{1}{2}}({\partial\Omega})$ solves 
BDIE systems {\rm(N2)} and  {\rm(N1)}.

(ii) Vice versa, if a
couple $(u,\varphi)\in H^1(\Omega )\times
H^{\frac{1}{2}}({\partial\Omega})$ solves one of the
BDIE systems, {\rm(N1)}  or {\rm(N2)}, then the couple solves the other one BDE system and $u$ solves the Neumann
problem  \eqref{2.6N}-\eqref{2.8} and $\gamma^+u=\varphi$.

(iii) The homogeneous BDIE systems {\rm(N1)} and
 {\rm(N2)} have unique linear independent solution\, $\mathcal U^0=(u^0,\varphi^0)^\top=(1,1)^\top$ in $H^1(\Omega )\times
 H^{\frac{1}{2}}({\partial\Omega})$.  
 Condition \eqref{3.suf}
is necessary and sufficient for solvability of the 
nonhomogeneous BDIE systems {\rm(N1)} and
 {\rm(N2)} in $H^1(\Omega )\times
 H^{\frac{1}{2}}({\partial\Omega})$.
\end{theorem}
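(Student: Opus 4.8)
The plan is to prove the three items by reducing everything to the equivalence between the BDIE systems and the Neumann BVP, and then invoking the solvability/uniqueness theory for the Neumann problem already recorded in Theorem~\ref{Rem1N}.

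\textbf{Item (i).} Suppose $u\in H^1(\Omega)$ solves \eqref{2.6N}--\eqref{2.8}, and set $\varphi=\gamma^+u$. Since $Au=r_\Omega\tilde f$ in $\Omega$, Theorem~\ref{GIequivalenceH-1} gives \eqref{4.2Gen}, and Corollary~\ref{IDequivGenDir} gives \eqref{4.2u+Gen} and \eqref{4.2T+Gen}, with $T^+(\tilde f,u)=\psi_0$ by the boundary condition. Substituting $\psi_0$ and $\varphi=\gamma^+u$ into \eqref{4.2Gen} and \eqref{4.2u+Gen} yields exactly \eqref{Basic1-2}--\eqref{Basic2-2} with the right-hand side \eqref{FN2}, so $(u,\varphi)$ solves (N2). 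Likewise, substituting into \eqref{4.2Gen} and \eqref{4.2T+Gen} yields \eqref{Basic1}--\eqref{Basic2} with right-hand side \eqref{FN1}, so $(u,\varphi)$ solves (N1). (Here I use $\gamma^+{\cal P}\tilde f -\V\psi_0 - \tfrac12\varphi +\W\varphi = \gamma^+F_0^N - \varphi$, which together with $\gamma^+$ of \eqref{Basic1-2} and the fact that $u$ satisfies its own trace relation reproduces \eqref{FN2}; the routine bookkeeping of which terms combine is standard and parallels the Dirichlet case in the proof of Theorem~\ref{Deqin}.)

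\textbf{Item (ii).} Assume $(u,\varphi)\in H^1(\Omega)\times H^{1/2}(\pO)$ solves (N1). Equation \eqref{Basic1} has the form \eqref{4.2nd} with $\Psi=\psi_0$, $\Phi=\varphi$, $\tilde f$ as given; Lemma~\ref{IDequivalenceGen} then yields $Au=r_\Omega\tilde f$ in $\Omega$ (so $u$ solves the PDE), together with \eqref{difference} and \eqref{4.2T+gen} for these data. Subtracting \eqref{4.2T+gen} from \eqref{Basic2} gives $T^+(\tilde f,u)=\psi_0$, i.e. the Neumann condition; hence $u$ solves the BVP. Now \eqref{difference} reads $r_\Omega V(\psi_0-T^+(\tilde f,u)) - r_\Omega W(\varphi-\gamma^+u)=0$ in $\Omega$; since $\psi_0-T^+(\tilde f,u)=0$, this gives $r_\Omega W(\varphi-\gamma^+u)=0$, and Lemma~\ref{VW0s}(ii) forces $\varphi=\gamma^+u$. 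Then item (i) shows $(u,\varphi)$ also solves (N2). The argument starting from (N2) is symmetric: Lemma~\ref{IDequivalenceGen} with $\Psi=\psi_0$, $\Phi=\varphi$ gives the PDE, \eqref{difference}, and \eqref{4.2u+gen}; subtracting \eqref{4.2u+gen} from \eqref{Basic2-2} gives $\varphi=\gamma^+u$, then \eqref{difference} gives $r_\Omega V(\psi_0-T^+(\tilde f,u))=0$, and Lemma~\ref{VW0s}(i) gives $T^+(\tilde f,u)=\psi_0$, so $u$ solves the BVP, and by item (i) $(u,\varphi)$ solves (N1).

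\textbf{Item (iii).} Apply item (ii) with $\tilde f=0$, $\psi_0=0$: solutions of the homogeneous (N1) or (N2) are precisely the pairs $(u,\gamma^+u)$ with $u$ a solution of the homogeneous Neumann BVP, which by Theorem~\ref{Rem1N}(i) is the one-dimensional space spanned by $u^0=1$; hence the homogeneous BDIE systems have the single linearly independent solution $\mathcal U^0=(1,1)^\top$. For the nonhomogeneous solvability statement, one direction is immediate: if (N1) or (N2) is solvable, then by item (ii) the Neumann BVP is solvable, so \eqref{3.suf} holds by Theorem~\ref{Rem1N}(ii). Conversely, if \eqref{3.suf} holds, Theorem~\ref{Rem1N}(ii) gives a solution $u$ of the BVP, and item (i) produces a solution $(u,\gamma^+u)$ of (N1) and of (N2). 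The main obstacle is the verification in item (ii) that subtracting the appropriate boundary equation of Corollary~\ref{IDequivGenDir} from the second BDIE equation leaves exactly $\psi_0-T^+(\tilde f,u)$ (resp. $\varphi-\gamma^+u$) with no residual volume term; this rests on the identity \eqref{4.2T+gen} (equivalently \eqref{4.2T+Gen}) whose right-hand side $T^+(\tilde f+\mathring E\,{\mathcal R}_*\tilde f,{\cal P}\tilde f)$ must match the inhomogeneity $\F^{N1}_2$ up to the $-\tfrac12\psi_0+\Wp\psi_0$ terms — this is exactly how \eqref{FN1} was constructed, so the matching is by design, but it must be stated carefully.
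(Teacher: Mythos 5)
Your proof is correct and follows essentially the same route as the paper's: item (i) via Theorem~\ref{GIequivalenceH-1} together with \eqref{4.2u+Gen}--\eqref{4.2T+Gen}, item (ii) via Lemma~\ref{IDequivalenceGen} combined with Lemma~\ref{VW0s} (the paper gets $\gamma^+u=\varphi$ for (N2) by taking the trace of \eqref{Basic1-2} rather than subtracting \eqref{4.2u+gen}, but the two are equivalent), and item (iii) by combining (i)--(ii) with Theorem~\ref{Rem1N}. The only blemish is the parenthetical identity in item (i), which as written is not correct; the substitution actually needed is simply $\frac12\varphi+\gamma^+{\mathcal R}u+{\mathcal W}\varphi=\gamma^+{\mathcal P}\tilde f+{\mathcal V}\psi_0=\gamma^+F_0^N$, and this slip does not affect the argument.
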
 
\begin{proof} 
(i) Let $u\in H^1(\Omega )$ be a solution of the
Neumann  problem
 \eqref{2.6N}-\eqref{2.8}. Then from Theorem~\ref{GIequivalenceH-1} and relations \eqref{4.2u+Gen} and \eqref{4.2T+Gen} we see
that the couple $(u,\varphi)$ with $\varphi= \gamma^+u$ solves LBDIE systems (N1)
and (N2), which proves item (i).

(ii) Let a couple $(u,\varphi)\in H^1(\Omega )\times
H^{\frac{1}{2}}({\partial\Omega})$ solve LBDIE system (N1). 
Lemma \ref{IDequivalenceGen} for equation \eqref{Basic1} implies that
$u$ is a solution of equation \eqref{2.1}, and equations
\eqref{difference}-\eqref{4.2T+gen} hold for $\Psi=\psi_0$ and
$\Phi=\varphi$. 
Subtracting \eqref{4.2T+gen} from equation \eqref{Basic2} gives $T^+ (\tilde f,u)=\psi_0$ on $\partial\Omega$. 
Further, from  \eqref{difference} we derive
$ W(\gamma^+u - \varphi)=0$  in $\Omega^{+},$
whence  $\gamma^+u = \varphi$  on $\partial \Omega$ by Lemma
 \ref{VW0s}, completing item (ii) for LBDIE system (N1).

Let now a couple $(u,\varphi)\in H^1(\Omega )\times
H^{\frac{1}{2}}({\partial\Omega})$ solve the LBDIE system
(N2). 
Further,
taking the trace of \eqref{Basic1-2} on ${\partial\Omega}$ and
comparing the result with \eqref{Basic2-2}, we easily derive that
$\gamma^+u=\varphi$ on ${\partial\Omega}$. 
Lemma \ref{IDequivalenceGen} for equation \eqref{Basic1-2} implies that
$u$ is a solution of equation \eqref{2.1}, while equations
\eqref{difference}-\eqref{4.2T+gen} hold for $\Psi=\psi_0$ and
$\Phi=\varphi$.
Further, from  \eqref{difference} we derive
$$
 V(\psi_0 -T^+(\tilde f,u) )=0 \quad   \mbox{\rm in}\;\;\;\;
  \Omega^{+},
$$
whence $T^+ u=\psi_0$ on ${\partial\Omega}$ by Lemma
 \ref{VW0s}, i.e., $u$ solves the
Neumann problem
 \eqref{2.6N}-\eqref{2.8}, which completes the proof of item (ii) for LBDIE system (N2).

(iii)
 Theorem~\ref{Rem1N} along with items (i) and (ii)  imply the
claims of item (iii) for LBDIE systems (N2) and  (N1).
\end{proof} 
\subsection{Properties of BDIE system operators for the Neumann problem}

BDIE systems (N1) and (N2) can be written, respectively, as
\begin{equation*} 
{\mathfrak N}^1\mathcal U^N=\mathcal F^{N1},\quad
{\mathfrak N}^2\mathcal U^N=\mathcal F^{N2},
\end{equation*}
where 
$
{\mathcal U}^N=(u, \varphi)^\top
\in H^1(\Omega)\times {H}^{\frac{1}{2}}(\partial_D\Omega),
$
\begin{equation*}
{\mathfrak N}^1:= \left[
\begin{array}{cc}
I+{\mathcal R} &\ W  \\[1ex]
 T^+{\mathcal R}  &\ {\mathcal L}^+
\end{array}\right],\quad
{\mathfrak N}^2:= \left[
\begin{array}{cc}
I+{\mathcal R} & W  \\[1ex]
\,\gamma^+ {\mathcal R}& \quad \displaystyle \frac{1}{2}I + {\mathcal W}
\end{array}
\right].
\end{equation*}
Due to the mapping properties of the potentials,
$\mathcal F^{N1}\in   H^1 (\Omega)\times H^{-\ha}(\pO),$
$\mathcal F^{N2}\in  H^1 (\Omega)\times H^{\ha}(\pO).$

\begin{theorem}\label{invertibilityN}
 The operators 
\begin{eqnarray}
  \mathfrak N^1&:& H^1(\Omega )\times
H^{\frac{1}{2}}(\partial\Omega) \to H^1(\Omega )\times
H^{-\frac{1}{2}}(\partial\Omega),\label {mfN1}\\
\label{mfN2}
\mathfrak N^2&:&H^1(\Omega )\times H^{\frac{1}{2}}({\partial\Omega})\to
  H^1(\Omega )\times H^{\frac{1}{2}}({\partial\Omega}).
  \end{eqnarray}
are continuous Fredholm operators with zero index. They have one--dimensional null--spaces,
$\ker \mathfrak N^1=\ker \mathfrak N^2$, in $H^1(\Omega )\times
H^{\frac{1}{2}}({\partial\Omega})$, spanned over the element $(u^0,\varphi^0)=(1,1)$.
\end{theorem}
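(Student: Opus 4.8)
The plan is to follow the scheme used for the Dirichlet systems in the proof of Theorem~\ref{AGTinvs}. Continuity of $\mathfrak N^1$ and $\mathfrak N^2$ is already recorded, so it remains (a) to show that each of $\mathfrak N^1$, $\mathfrak N^2$ is Fredholm with zero index, and (b) to identify the null-spaces. For (a) I would exhibit each $\mathfrak N^j$ as a compact perturbation of a block upper triangular operator $\left[\begin{array}{cc}I & W\\ 0 & D\end{array}\right]$ whose diagonal blocks are Fredholm of index zero; recall that any bounded operator of this form with Fredholm diagonal blocks is itself Fredholm with index equal to the sum $\operatorname{ind}I+\operatorname{ind}D$ of the diagonal indices, whatever the bounded off-diagonal block $W$ is. The only step requiring genuine care is the hypersingular diagonal block $\mathcal L^+$ occurring in system (N1): unlike the diagonal blocks appearing in (N2) and in the Dirichlet systems, it has order one and is not a compact perturbation of a multiple of the identity.

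For $\mathfrak N^2$ the reduction is immediate. The operators $\mathcal R:H^1(\Omega)\to H^1(\Omega)$ and $\gamma^+\mathcal R:H^1(\Omega)\to H^{1/2}(\pO)$ are compact (Corollary~\ref{B.3}), and $\mathcal W:H^{1/2}(\pO)\to H^{1/2}(\pO)$ is compact by \eqref{VWab1} together with the compactness of the Laplace double-layer boundary operator $\mathcal W_\Delta$ on the smooth surface $\pO$ (see \cite{CMN-1,CMN-2,MikMMAS2006} and Corollary~\ref{B.3}). Hence $\mathfrak N^2$ is a compact perturbation of
\begin{equation*}
\mathfrak N^2_0:=\left[\begin{array}{cc} I & W\\ 0 & \frac12 I\end{array}\right],
\end{equation*}
which is block upper triangular with invertible diagonal entries $I:H^1(\Omega)\to H^1(\Omega)$ and $\frac12 I:H^{1/2}(\pO)\to H^{1/2}(\pO)$, hence invertible; therefore $\mathfrak N^2$ is Fredholm with zero index.

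For $\mathfrak N^1$ I would first establish that $\mathcal L^+:H^{1/2}(\pO)\to H^{-1/2}(\pO)$ is Fredholm with zero index. By \eqref{VWab4}, $\mathcal L^+=\mathcal L_\Delta\circ M_a+K$, where $M_a$ denotes multiplication by $a$, an isomorphism of $H^{1/2}(\pO)$ because $a\in C^\infty(\overline\Omega)$, $a>0$; the operator $Kg:=\bigl[a\,\partial_n(1/a)\bigr]\,W^+_\Delta(ag)$ is bounded from $H^{1/2}(\pO)$ into $H^{1/2}(\pO)$, hence compact as an operator into $H^{-1/2}(\pO)$ by the compactness of the embedding $H^{1/2}(\pO)\hookrightarrow H^{-1/2}(\pO)$; and $\mathcal L_\Delta:H^{1/2}(\pO)\to H^{-1/2}(\pO)$ is Fredholm with zero index, which is classical (see the Appendix and e.g.\ \cite{McLean2000}). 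Thus $\mathcal L^+$ is Fredholm with zero index. Since $\mathcal R$ in $H^1(\Omega)$ and $T^+\mathcal R:H^1(\Omega)\to H^{-1/2}(\pO)$ are compact (Corollary~\ref{B.3}), $\mathfrak N^1$ is a compact perturbation of the block upper triangular operator with entries $I$, $W$, $0$, $\mathcal L^+$, whose diagonal blocks $I$ and $\mathcal L^+$ are Fredholm of index zero; therefore $\mathfrak N^1$ is Fredholm with zero index.

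Finally, I would identify the null-spaces. By Theorem~\ref{equivalenceN}(ii), applied with $\tilde f=0$, $\psi_0=0$, a couple $(u,\varphi)\in H^1(\Omega)\times H^{1/2}(\pO)$ lies in $\ker\mathfrak N^1$ if and only if it lies in $\ker\mathfrak N^2$, if and only if $u$ solves the homogeneous Neumann problem and $\varphi=\gamma^+u$; by Theorem~\ref{Rem1N}(i) (equivalently, Theorem~\ref{equivalenceN}(iii)) this common kernel is one-dimensional and spanned by $\mathcal U^0=(u^0,\varphi^0)^\top=(1,1)^\top$. Combined with the zero index just established, this also gives $\dim\operatorname{coker}\mathfrak N^1=\dim\operatorname{coker}\mathfrak N^2=1$, and the proof is complete.
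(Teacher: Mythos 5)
Your proposal is correct and follows essentially the same route as the paper: both treat $\mathfrak N^1$ and $\mathfrak N^2$ as compact perturbations of block upper triangular operators, with the key step being that $\mathcal L^+$ differs from $g\mapsto\mathcal L_\Delta(ag)$ (the paper's $\mathcal L^+_0$, your $\mathcal L_\Delta\circ M_a$) by a compact operator via \eqref{VWab4}, and both deduce the kernel description from Theorem~\ref{equivalenceN}(iii). The only cosmetic difference is that you spell out the general index formula for triangular block operators, which the paper uses implicitly.
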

\begin{proof} 
The mapping properties of the potentials, see Appendix, imply continuity of operators \eqref{mfN2}  and \eqref{mfN1}.

First consider operator \eqref{mfN1}. Let us denote $\L_0^+g:={\L}^+_{\Delta}(ag)$. Hence the operator $\L_0^+: H^{\frac{1}{2}}(\partial\Omega) \to
H^{-\frac{1}{2}}(\partial\Omega)$ is a Fredholm operator with zero index (cf. e.g. \cite[Theorem 2]{Costabel1988}, \cite[Ch. XI, Part B,
\S 3,]{DaLi4}).
Therefore the operator
\begin{equation}
\label{d27}
{\cal A}^{N1}_0:=
 \left[
 \begin{array}{cc}
I     &   W \\
 0      & \L_0^+
  \end{array}
 \right]  \,:\, H^1(\Omega )\times
H^{\frac{1}{2}}(\partial\Omega) \to H^1(\Omega )\times
H^{-\frac{1}{2}}(\partial\Omega).
  \end{equation}
is also Fredholm with zero index.
Operator \eqref{mfN1} is a compact perturbation of ${\cal A}^{N1}_0$ since the operators
$$
\begin{array}{l}
{\cal R}\, :\, H^{1}(\Omega)\to H^{1}(\Omega)\\
\L^+-\L_0^+\, :\, H^{\frac{1}{2}}(\partial\Omega) \to
H^{-\frac{1}{2}}(\partial\Omega),\\
 T^+{\cal R} \, :\, H^1(\Omega )\to H^{-\frac{1}{2}}(\partial\Omega)
\end{array}
$$
are compact, due to relation \eqref{VWab4} and Theorem \ref{T3.3}.
Thus operator  \eqref{mfN1} is Fredholm with zero index as well. The
claims that $\ker \mathfrak N^1$ is one--dimensional and the
couple $(u^0,\varphi^0)=(1,1)$ belongs to $\ker \mathfrak N^1$
directly follow from Theorem \ref{equivalenceN}(iii).

The proof for operator \eqref{mfN2} is similar.
\end{proof}

To describe in more details the ranges of operators \eqref{mfN1} and  \eqref{mfN2}, i.e., to give more information about the co-kernels of these operators,  we will 
need several auxiliary assertions. 
First of all, let us remark that for any $v\in  H^{s-\tha}(\partial\Omega)$, $s<\frac{3}{2}$, the single layer potential can be defined as
\begin{align}\label{V-P}
Vv(y):=-\langle \gamma P(\cdot,y), v\rangle_{\partial\Omega}
=-\langle P(\cdot,y),\gamma^*  v\rangle_{\R^3}
=-\mathbf P \,\gamma^*v(y),\quad y\in \R^3\setminus\partial\Omega.
\end{align}
where $\gamma^*: H^{s-\tha}(\partial\Omega)\to H^{s-2}_{\partial\Omega}$, $s<\frac{3}{2}$, is the operator adjoined to the trace operator $\gamma:H^{2-s}(\R^3)\to H^{\tha-s}(\partial\Omega)$, and the space $H^{s-2}_{\partial\Omega}$ is defined by \eqref{H_dO}.

\begin{lemma}\label{lemma3P1}
Let  $\tilde f\in
  \widetilde H^{s-2}(\Omega)$, $s>\ha$.  If
\begin{equation}
 \label{B.190}
r_\Omega\mathbf P \,\tilde f=0\quad \text{in }\Omega,
\end{equation}
then $\tilde f=0$ in $\R^3$.
\end{lemma}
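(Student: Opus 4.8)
The plan is to exploit the explicit representation $\mathbf P\,\tilde f = \frac{1}{a}\,\mathbf P_\Delta\,\tilde f$, reduce the claim to a statement about the Newtonian potential $\mathbf P_\Delta$ for the Laplacian, and then use the standard fact that $\mathbf P_\Delta$ is (up to sign) the inverse of $\Delta$ on the scale of Bessel potential spaces. Since $a(y)>0$ on $\overline\Omega$, the hypothesis $r_\Omega\mathbf P\,\tilde f=0$ is equivalent to $r_\Omega\mathbf P_\Delta\,\tilde f=0$ in $\Omega$. Thus it suffices to show that if $\tilde f\in\widetilde H^{s-2}(\Omega)$ with $s>\tfrac12$ and $r_\Omega\mathbf P_\Delta\,\tilde f=0$ in $\Omega$, then $\tilde f=0$ in $\R^3$.

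First I would observe that $U:=\mathbf P_\Delta\,\tilde f\in H^{s}(\R^3)$ (the Newtonian potential gains two derivatives, being the Fourier multiplier $|\xi|^{-2}$), and that $\Delta U = \tilde f$ in $\R^3$ in the distributional sense. Since $\supp\tilde f\subset\overline\Omega$, the function $U$ is harmonic in the open complement $\R^3\setminus\overline\Omega$, and since it lies in $H^s(\R^3)$ with $s>\tfrac12$ it decays at infinity (indeed $\mathbf P_\Delta\,\tilde f(y)=O(|y|^{-1})$ as $|y|\to\infty$ because $\langle\tilde f,1\rangle$ is a finite number and the kernel is $-1/4\pi|x-y|$). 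The hypothesis says $U=0$ on $\Omega$. Now $U$ is harmonic on $\R^3\setminus\overline\Omega$ and vanishes on $\Omega$; to conclude $U\equiv0$ I would argue by a trace/unique-continuation argument: from $U\in H^s(\R^3)$, $s>\tfrac12$, the traces $\gamma^\pm U$ from inside and outside $\Omega$ coincide (no jump), and likewise there is no jump in the normal derivative since $\Delta U$ has no singular part on $\partial\Omega$ (it equals $\tilde f\in\widetilde H^{s-2}(\Omega)$, which for $s-2\ge -\tfrac32$, i.e. $s\ge\tfrac12$, carries no distribution supported on $\partial\Omega$ — here we use $s>\tfrac12$ strictly, or the borderline lemma \cite[Lemma 3.39]{McLean2000} / \cite[Theorem 2.10]{MikJMAA2011}). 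Hence $\gamma^+U=\gamma^+_{ext}U=0$ on $\partial\Omega$, and $U$ is harmonic in the exterior domain $\R^3\setminus\overline\Omega$ with zero Dirichlet data and decay at infinity; by uniqueness for the exterior Dirichlet problem for the Laplacian, $U\equiv0$ outside $\overline\Omega$ as well. Therefore $U\equiv 0$ in $\R^3$ (it is zero on $\Omega$, zero outside $\overline\Omega$, and continuous, being in $H^s$ with $s>\tfrac12$ — actually continuity needs $s>3/2$; to avoid that I would simply say $U=0$ a.e. on the dense open set $\Omega\cup(\R^3\setminus\overline\Omega)$ whose complement $\partial\Omega$ has measure zero, hence $U=0$ in $\R^3$ as an element of $H^s$). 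Consequently $\tilde f=\Delta U=0$ in $\R^3$.

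An alternative, more self-contained route avoiding exterior-domain uniqueness: pair the identity $\Delta U=\tilde f$ with a test function and integrate by parts. Take any $v\in H^{2-s}(\R^3)$; then $\langle\tilde f,v\rangle_{\R^3}=\langle\Delta U,v\rangle_{\R^3}=\langle U,\Delta v\rangle_{\R^3}$. Since $\tilde f$ is supported in $\overline\Omega$, $\langle\tilde f,v\rangle_{\R^3}$ depends only on $v|_\Omega$ in an appropriate sense; and since $U$ vanishes on $\Omega$, one can try to show the right-hand side also vanishes by choosing $v$ cleverly. However, this is delicate because $\Delta v$ need not be supported away from $\Omega$. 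I therefore expect the cleanest argument is the harmonicity-plus-uniqueness one above, and the main obstacle is the careful handling of the regularity at the borderline: ensuring there is no surface-supported distribution in $\tilde f$ on $\partial\Omega$ (so that $U$ is genuinely a global $H^s$ solution of $\Delta U=\tilde f$ across $\partial\Omega$ with no hidden single/double layer), which is exactly where the hypothesis $s>\tfrac12$ and the cited lemmas on distributions supported on a smooth surface are needed. Once that is secured, the vanishing of $U$ on $\Omega$, its harmonicity and decay in the exterior, and uniqueness of the exterior Dirichlet problem close the argument, giving $\tilde f=\Delta U=0$.
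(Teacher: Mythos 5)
Your overall strategy --- pass to $U=\mathbf P_\Delta\tilde f$, use that $U$ vanishes in $\Omega$, is harmonic outside $\overline\Omega$ and decays, and conclude $U\equiv 0$ by uniqueness for the exterior Dirichlet problem --- is genuinely different from the paper's. The paper applies $\Delta$ to get $r_\Omega\tilde f=0$, so that $\supp\tilde f\subset\partial\Omega$; then for $s\ge\tha$ it concludes $\tilde f=0$ outright, while for $\ha<s<\tha$ it uses the structure theorem for surface-supported distributions to write $\tilde f=\gamma^*v$ with $v\in H^{s-\tha}(\partial\Omega)$, notes $\mathbf P_\Delta\gamma^*v=-V_\Delta v$, and finishes with the injectivity of the single layer potential (Lemma~\ref{VW0s}(i)). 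Your route could in principle be completed, but as written it contains a concrete error and leaves the hardest step unproven.

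The error: you assert that $\tilde f\in\widetilde H^{s-2}(\Omega)$ with $s-2\ge-\tha$ ``carries no distribution supported on $\partial\Omega$'', and you identify this as where $s>\ha$ enters. The correct threshold is $t\ge-\ha$, not $t\ge-\tha$: a distribution of $H^{t}(\R^3)$ supported on the smooth surface $\partial\Omega$ vanishes only for $t\ge-\ha$ (i.e.\ $s\ge\tha$ here); for $-\tha<t<-\ha$ it is exactly a single layer $\gamma^*v$, generally nonzero. So in the range $\ha<s<\tha$ the distribution $\tilde f$ \emph{can} have a nonzero surface part, and ruling that out is precisely the nontrivial content of the lemma --- were your claim true, the proof would end immediately after observing $r_\Omega\tilde f=0$. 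Your Dirichlet-trace step happens to survive this (a single-layer source jumps $\partial_nU$ but not $\gamma U$, and $U\in H^s_{\mathrm{loc}}$ with $s>\ha$ has one well-defined trace), but the normal-derivative remark is false, and the exterior uniqueness you then invoke --- for a harmonic function whose trace lies only in $H^{s-\ha}(\partial\Omega)$ with $s-\ha\in(0,1)$, so not $H^1$ up to the boundary --- is asserted rather than proven; at this regularity it is usually itself established via the injectivity of $\mathcal V_\Delta$, which is what the paper cites directly. Finally, $\mathbf P_\Delta\tilde f$ does not belong to $H^{s}(\R^3)$ globally (the factor $|\xi|^{-2}$ ruins square-integrability near $\xi=0$ whenever $\langle\tilde f,1\rangle\ne0$); only $H^s_{\mathrm{loc}}$ membership together with the explicit $O(|y|^{-1})$ decay is available, which is what your argument actually needs.
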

\begin{proof} 
Multiplying \eqref{B.190} by $a$, taking into account \eqref{3.d1}  and applying the Laplace operator, we obtain $r_\Omega\tilde f=0$, which means $\tilde f\in H^{s-2}_{\partial\Omega}$. If $s\ge\tha$, then $\tilde f=0$ by Theorem 2.10 from \cite{MikJMAA2011}. If $\ha<s<\tha$, then by the same theorem  there exists $v\in H^{s-\tha}(\partial\Omega)$ such that $\tilde f=\gamma^*v$. This gives
$\mathbf P \,\tilde f=\mathbf P \,\gamma^*v=-Vv$ in $\R^3$.
Then \eqref{B.190} reduces to
$Vv=0$ in $\Omega$, which implies  $v=0$ on $\partial\Omega$
 (see e.g. Lemma \ref{VW0s} for $s=1$, which can be easily generalized to $\ha<s<\tha$) and thus $\tilde f=0$ in $\R^3$.
\end{proof} 

\begin{theorem}\label{teoremP1inv}
Let $\ha<s<\tha$. The operator
\begin{equation}
 \label{B.19wP1}
\mathbf P:\widetilde H^{s-2}(\Omega )\to H^{s}(\Omega )
\end{equation}
and its inverse
\begin{equation}
 \label{B.19wP1in}
(\mathbf P)^{-1}: H^{s}(\Omega )\to \widetilde H^{s-2}(\Omega )
\end{equation}
are continuous and
\be\label{B.194a}
(\mathbf P)^{-1} g
=[\Delta \mathring E (I-  V_\Delta \mathcal{V}_{\Delta}^{-1}\gamma^+)
-\gamma^*\mathcal{V}_{\Delta}^{-1}\gamma^+](ag)\ \mbox{ in } \R^3,\quad \forall g\in  H^{s}(\Omega ).
\ee
\end{theorem}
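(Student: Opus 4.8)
The plan is to reduce everything to the constant-coefficient case through the relation $\mathbf P g=\frac1a\mathbf P_\Delta g$ and then simply verify that the operator on the right of \eqref{B.194a} is an explicit two-sided inverse. Continuity of \eqref{B.19wP1} is already at hand: $r_\Omega\mathbf P_\Delta:\widetilde H^{s-2}(\Omega)\to H^s(\Omega)$ is continuous (the Newtonian potential gains two derivatives) and multiplication by $1/a\in C^\infty(\overline\Omega)$ is bounded on $H^s(\Omega)$; injectivity of \eqref{B.19wP1} is precisely Lemma~\ref{lemma3P1}. So it remains to prove surjectivity together with the inverse formula. For this it suffices to show that the operator
$$
Sg:=[\Delta\mathring E(I-V_\Delta\mathcal V_\Delta^{-1}\gamma^+)-\gamma^*\mathcal V_\Delta^{-1}\gamma^+](ag)
$$
maps $H^s(\Omega)$ continuously into $\widetilde H^{s-2}(\Omega)$ and satisfies $r_\Omega\mathbf P Sg=g$ for every $g\in H^s(\Omega)$; injectivity of $\mathbf P$ then forces $S\mathbf P=I$ as well, whence $S=(\mathbf P)^{-1}$ is continuous, which is \eqref{B.19wP1in}--\eqref{B.194a}.

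To check the mapping property of $S$, fix $g\in H^s(\Omega)$, set $h:=ag\in H^s(\Omega)$ and $w:=\mathcal V_\Delta^{-1}\gamma^+h$. Since $\ha<s<\tha$, the trace $\gamma^+:H^s(\Omega)\to H^{s-\ha}(\partial\Omega)$ is bounded and $\mathcal V_\Delta:H^{s-\tha}(\partial\Omega)\to H^{s-\ha}(\partial\Omega)$ is an isomorphism, so $w\in H^{s-\tha}(\partial\Omega)$ and, by elliptic regularity for the harmonic function $V_\Delta w$, $r_\Omega V_\Delta w\in H^s(\Omega)$ with $\gamma^+r_\Omega V_\Delta w=\mathcal V_\Delta w=\gamma^+h$. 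Hence $h-r_\Omega V_\Delta w\in H^s(\Omega)$ has zero trace, and because $s<\tha$ its extension by zero $v:=\mathring E(h-r_\Omega V_\Delta w)$ lies in $\widetilde H^s(\Omega)$; also $\gamma^*:H^{s-\tha}(\partial\Omega)\to H^{s-2}_{\pO}$ is bounded for $s<\tha$. Therefore $Sg=\Delta v-\gamma^* w\in\widetilde H^{s-2}(\Omega)$, and $S$ is a composition of bounded operators, hence continuous $H^s(\Omega)\to\widetilde H^{s-2}(\Omega)$.

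Finally I verify $r_\Omega\mathbf P Sg=g$. Using $\mathbf P_\Delta\Delta=I$ (the symbols $-|\xi|^{-2}$ and $-|\xi|^2$ compose to $1$) one has $\mathbf P_\Delta\Delta v=v$ in $\R^3$, while \eqref{V-P} with $a=1$ gives $\mathbf P_\Delta\gamma^* w=-V_\Delta w$; hence $\mathbf P_\Delta Sg=\mathbf P_\Delta\Delta v-\mathbf P_\Delta\gamma^* w=v+V_\Delta w$, and restricting to $\Omega$, $r_\Omega\mathbf P_\Delta Sg=(h-r_\Omega V_\Delta w)+r_\Omega V_\Delta w=h=ag$. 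Multiplying by $1/a$ yields $r_\Omega\mathbf P Sg=g$. The point requiring a little care — and essentially the only non-bookkeeping step — is the justification of $\mathbf P_\Delta\Delta v=v$ together with $\mathbf P_\Delta\gamma^* w=-V_\Delta w$, i.e. that $v+V_\Delta w$ is genuinely $\mathbf P_\Delta(\Delta v-\gamma^* w)$; this rests on the compact support of $v$ and on the potential identities and mapping properties collected in Section~\ref{G3G} and the Appendix. The index window $\ha<s<\tha$ is exactly what is needed: $s>\ha$ for Lemma~\ref{lemma3P1} and for the trace $\gamma^+$, and $s<\tha$ for boundedness of $\mathring E$ on the zero-trace subspace and of $\gamma^*$ into $H^{s-2}_{\pO}$.
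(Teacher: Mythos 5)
Your argument is correct and follows essentially the same route as the paper: injectivity via Lemma~\ref{lemma3P1}, the harmonic splitting $ag=(ag-r_\Omega V_\Delta\mathcal V_\Delta^{-1}\gamma^+(ag))+r_\Omega V_\Delta\mathcal V_\Delta^{-1}\gamma^+(ag)$ with extension by zero of the trace-free part, and the identities $\mathbf P_\Delta\Delta\tilde h=\tilde h$ on $\widetilde H^{s}(\Omega)$ and $V_\Delta=-\mathbf P_\Delta\gamma^*$ from \eqref{V-P}. The only (immaterial) difference is that you verify the stated formula is a right inverse and then invoke injectivity, whereas the paper derives the same formula by applying $\Delta$ to the equation $\mathbf P_\Delta\tilde f=ag$ lifted to $\R^3$ and then checks it back.
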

\begin{proof} 
The continuity of \eqref{B.19wP1} is well known, cf. \cite[Theorem 3.8]{CMN-1}. By Lemma~\ref{lemma3P1}, operator \eqref{B.19wP1} is injective. Let us prove its surjectivity. To this end, for arbitrary $g\in H^{s}(\Omega )$ let us consider the following equation with respect to $\tilde f\in \widetilde H^{s-2}(\Omega )$,
\begin{equation}
 \label{B.191}
\mathbf P_\Delta  \,\tilde f=g \text{ in }\Omega .
\end{equation}

Let $g_1\in H^{s}(\Omega )$ be the (unique) solution of the following Dirichlet problem: $\Delta g_1=0$ in $\Omega$, $\gamma^+g_1=\gamma^+g$, which can be particularly presented as $g_1=V_\Delta \mathcal{V}_{\Delta}^{-1}\gamma^+g,$ see e.g \cite{Costabel1988} or proof of Lemma 2.6 in \cite{MikJMAA2011}.
Let $g_0:=g- g_1$. Then $g_0\in H^s(\Omega)$ and $\gamma^+g_0=0$ and thus $g_0$ can be uniquely extended  to $\mathring E g_0\in \widetilde H^s(\Omega)$, where $\mathring E$ is the operator of extension by $0$ outside $\Omega$. 
Thus by  \eqref{V-P}, equation \eqref{B.191} takes form 
\begin{equation}
 \label{B.192}
r_\Omega\mathbf P_\Delta  [\tilde f+\gamma^*\mathcal{V}_{\Delta}^{-1}\gamma^+g]=g_0 \text{ in }\Omega.
\end{equation}
Any solution $\tilde f\in \widetilde H^{s-2}(\Omega )$ of the corresponding equation on $\R^3$,
\begin{equation}
 \label{B.193}
\mathbf P_\Delta  [\tilde f+\gamma^*\mathcal{V}_{\Delta}^{-1}\gamma^+g]= \mathring E g_0\ \mbox{ in } \R^3,
\end{equation}
will evidently solve \eqref{B.192}. If $\tilde f$ solves \eqref{B.193} then acting with the Laplace operator on \eqref{B.193}, we obtain
\be\label{B.194}
\tilde f=\tilde Q g:= \Delta \mathring E g_0 -\gamma^*\mathcal{V}_{\Delta}^{-1}\gamma^+g
=\Delta \mathring E (g- r_\Omega V_\Delta \mathcal{V}_{\Delta}^{-1}\gamma^+g)
-\gamma^*\mathcal{V}_{\Delta}^{-1}\gamma^+g\ \mbox{ in } \R^3.
\ee
On the other hand, substituting $\tilde f$ given by \eqref{B.194} to \eqref{B.193} and taking into account that $\mathbf P_\Delta  \Delta\tilde h=\tilde h$ for any $\tilde h\in \widetilde H^{s}(\Omega )$, $s\in\R$, we obtain that $\tilde Q g$ is indeed a solution of equation \eqref{B.193} and thus \eqref{B.192}. 
By Lemma~\ref{lemma3P1} the solution of \eqref{B.192} is unique, which means that the operator $\tilde Q$ is inverse to operator \eqref{B.19wP1}, i.e., $\tilde Q=(r_\Omega\mathbf P)^{-1}$.
Since $\Delta$ is a continuous operator from $\widetilde H^{s}(\Omega )$ to $\widetilde H^{s-2}(\Omega )$, equation \eqref{B.194} implies that the operator $(r_\Omega\mathbf P)^{-1}=\tilde Q:H^{s}(\Omega )\to \widetilde H^{s-2}(\Omega )$ is continuous.
The relations $\mathbf P =\frac{1}{a}\mathbf P_\Delta $ and $a(x)>c>0$ then imply invertibility of operator \eqref{B.19wP1} and ansatz \eqref{B.194a}.
\end{proof} 

\begin{lemma}\label{L4}
{
For any couple $(\F_1,\F_2)\in H^{1}(\Omega)\times {H}^{-\ha}(\pO)$, 
there exists a unique couple $(\tilde f_{**}, \Phi_*)\in \s{H}^{-1} (\Omega)\times 
{H}^{\ha}(\pO)$
 such that
\begin{eqnarray}
\label{4.8Psi1**} \F_1 &=&{\cal P}\tilde f_{**}-W\Phi_* \quad
\text{in }\Omega,\\
\label{4.8Psi2**}
\F_2&=&T^+(\tilde f_{**}+\mathring E\,{\mathcal R}_*\tilde f_{**},{\cal P}\tilde f_{**})-\L^+ \Phi_* 
  \quad
\text{on }\partial\Omega.
\end{eqnarray}
Moreover, $(\tilde f_{**}, \Phi_*)={\cal C}_{**}(\F_1,\F_2)$ and
${\cal C}_{**}: H^{1}(\Omega)\times {H}^{-\ha}(\pO)\to \s{H}^{-1} (\Omega)\times {H}^{\ha}(\pO)$ 
is a linear continuous operator given by 
\begin{align}\label{f**1}
\tilde f_{**}=&\check{\Delta}(a\F_1)+\gamma^*(\F_2+(\gamma^+\F_1)\partial_n a),\\
\label{Phi*F**0}
 \Phi_* 
=&\frac{1}{a}\left(-\ha I + {\cal W}_\Delta\right)^{-1}\gamma^+
\{ -a\F_1+{\cal P}_\Delta[\check{\Delta}(a\F_1)+\gamma^*(\F_2+(\gamma^+\F_1)\partial_n a)]\},
\end{align}
where $\check{\Delta}(a\F_1)=\nabla\cdot\mathring E \nabla (a\F_1)$.
}
\comment{
(ii) For any couple $(\F_1,\F_2)\in H^{1}(\Omega)\times {H}^{-\ha}(\pO)$,
there exists a triple
$(\tilde f_*,\Phi_*,\Psi_*)
={\cal C}_*(\F_1,\F_2)
\in \s{H}^{-1} (\Omega)\times {H}^{\ha}(\pO)\times {H}^{-\ha}(\pO)$
 such that
\begin{eqnarray}
\label{4.8Psi1} \F_1 &=&{\cal P}\tilde f_* + V\Psi_* - W\Phi_* \quad
\text{in }\Omega,\\
\label{4.8Psi2}
\F_2&=&T^+(\tilde f_*+\mathring E\,{\mathcal R}_*\tilde f_*,{\cal P}\tilde f_*)  
-\frac{1}{2}\Psi_*+{\Wp}\Psi_*
-\L^+ \Phi_*
  \quad
\text{on }\partial\Omega.
\end{eqnarray}
and 
${\cal C}_*: H^{1}(\Omega)\times {H}^{-\ha}(\pO)\to 
\s{H}^{-1} (\Omega)\times {H}^{\ha}(\pO)\times {H}^{-\ha}(\pO)$ 
is a linear continuous operator. 
}
\end{lemma}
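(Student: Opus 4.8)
The statement asks to invert the map $(\tilde f_{**},\Phi_*)\mapsto(\F_1,\F_2)$ defined by \eqref{4.8Psi1**}--\eqref{4.8Psi2**}, and to verify the explicit formulas \eqref{f**1}--\eqref{Phi*F**0}. My plan is to proceed constructively: given $(\F_1,\F_2)$, I will derive necessary formulas for $\tilde f_{**}$ and $\Phi_*$ by applying suitable differential/trace operators to the two equations, then check that the resulting candidates do solve \eqref{4.8Psi1**}--\eqref{4.8Psi2**}, and finally argue uniqueness. First, multiply \eqref{4.8Psi1**} by $a$ and apply $\check\Delta=\nabla\cdot\mathring E\nabla$ (the canonical extension of $\Delta$). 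Since $\Delta(a\,{\cal P}\tilde f_{**})=\tilde f_{**}$ in $\Omega$ by \eqref{DeltaPg} and $\Delta(a\,W\Phi_*)=0$ in $\Omega$ by \eqref{DV,DW=0}, and since $W\Phi_*$ is harmonic in $aW\Phi_*$ with a known jump across $\pO$, the distribution $\check\Delta(a\F_1)$ should differ from $\tilde f_{**}$ by a layer distribution supported on $\pO$; tracking the jump relation for the double-layer potential (i.e.\ that $a W\Phi_*$ has a jump in the normal derivative governed by $\Phi_*$) will produce exactly the term $\gamma^*(\cdots)$. Using \eqref{4.8Psi2**} to evaluate the co-normal jump contributes the factor $\F_2+(\gamma^+\F_1)\partial_n a$, where the $\partial_n a$ term comes from the relation \eqref{VWab4} between $\L^+$ and $\L_\Delta$. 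This yields \eqref{f**1}.

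\textbf{Recovering $\Phi_*$.} Once $\tilde f_{**}$ is known, I substitute it back into \eqref{4.8Psi1**} rewritten as $a\F_1={\cal P}_\Delta\tilde f_{**}-W_\Delta(a\Phi_*)$ (using \eqref{VWab1}), so that $W_\Delta(a\Phi_*)={\cal P}_\Delta\tilde f_{**}-a\F_1$ in $\Omega$. Taking the trace $\gamma^+$ and using the jump relation \eqref{3.9} for $W_\Delta$, namely $\gamma^+W_\Delta(a\Phi_*)=(-\tfrac12 I+{\cal W}_\Delta)(a\Phi_*)$, gives $(-\tfrac12 I+{\cal W}_\Delta)(a\Phi_*)=\gamma^+\{{\cal P}_\Delta\tilde f_{**}-a\F_1\}$. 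The operator $-\tfrac12 I+{\cal W}_\Delta: H^{1/2}(\pO)\to H^{1/2}(\pO)$ is invertible (this is the classical interior Dirichlet double-layer operator for $\Delta$, invertible by e.g.\ \cite{Costabel1988}), so I divide by $a$ and obtain \eqref{Phi*F**0}. Continuity of ${\cal C}_{**}$ then follows by composing the mapping properties already catalogued: $\check\Delta: H^1(\Omega)\to\s{H}^{-1}(\Omega)$, $\gamma^*: H^{-1/2}(\pO)\to\s{H}^{-1}(\Omega)$ (a special case of the operator in \eqref{V-P}), the multiplication by $a\in C^\infty(\ov\Omega)$, ${\cal P}_\Delta:\s{H}^{-1}(\Omega)\to H^1(\Omega)$ from Theorem~\ref{teoremP1inv}, $\gamma^+$, and the bounded inverse $(-\tfrac12 I+{\cal W}_\Delta)^{-1}$.

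\textbf{Existence versus uniqueness.} To complete the existence part I must verify that the constructed pair actually satisfies \eqref{4.8Psi1**}--\eqref{4.8Psi2**}, not merely the derived (necessary) relations. Set $g:={\cal P}\tilde f_{**}-W\Phi_*-\F_1$; by construction $\Delta(ag)=0$ in $\Omega$ (that is how $\tilde f_{**}$ was chosen) and $\gamma^+g=0$ (that is how $\Phi_*$ was chosen), hence $ag=0$ by uniqueness for the Dirichlet problem for $\Delta$, so \eqref{4.8Psi1**} holds. For \eqref{4.8Psi2**} I apply $T^+$ to this identity: $g\in H^{1,0}(\Omega;A)$ with $\tilde Ag=\tilde f_{**}+\mathring E{\cal R}_*\tilde f_{**}$ on the ${\cal P}\tilde f_{**}$ side via \eqref{LPf}, while $T^+W\Phi_*=\L^+\Phi_*$ by definition \eqref{3.14}; linearity \eqref{GCDL} of the generalised co-normal derivative in the couple then gives $T^+(\tilde f_{**}+\mathring E{\cal R}_*\tilde f_{**},{\cal P}\tilde f_{**})-\L^+\Phi_*-\F_2=T^+g=0$ since $g=0$. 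Uniqueness of $(\tilde f_{**},\Phi_*)$ is immediate: if $(\F_1,\F_2)=0$ then the derived formulas \eqref{f**1}--\eqref{Phi*F**0} force $\tilde f_{**}=0$ and $\Phi_*=0$, and linearity of the system finishes the argument. The main obstacle I anticipate is the first step: correctly accounting for the boundary (layer-supported) contributions when applying $\check\Delta$ to $a\F_1$ — i.e.\ showing rigorously that the support-on-$\pO$ part of $\check\Delta(aW\Phi_*)$ equals $-\gamma^*((\gamma^+W\Phi_*)\partial_n a+\cdots)$ so that combining with \eqref{4.8Psi2**} yields precisely $\gamma^*(\F_2+(\gamma^+\F_1)\partial_n a)$; this is where the variable-coefficient terms ($\partial_n a$ via \eqref{VWab3}--\eqref{VWab4}) enter and must be tracked with care, and it likely uses a density/approximation argument together with the jump relations \eqref{3.8}--\eqref{3.10}.
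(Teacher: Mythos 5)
Your overall strategy coincides with the paper's: derive necessary formulas for $\tilde f_{**}$ and $\Phi_*$ by applying $\check\Delta(a\,\cdot\,)$ and $\gamma^+$ to \eqref{4.8Psi1**}, recover $\Phi_*$ by inverting $-\ha I+\W_\Delta$ on the trace of the harmonic function $W_\Delta(a\Phi_*)$, verify sufficiency via uniqueness of the Dirichlet problem for harmonic functions, and get uniqueness of $(\tilde f_{**},\Phi_*)$ from linearity plus the necessity of the formulas. However, there is a genuine gap exactly where you yourself flag "the main obstacle": the identification of the layer part of $\tilde f_{**}$. Applying $\Delta(a\,\cdot\,)$ to \eqref{4.8Psi1**} only yields $r_\Omega\tilde f_{**}=\Delta(a\F_1)$ in $\Omega$, hence $\tilde f_{**}=\check\Delta(a\F_1)-\gamma^*\Psi_{**}$ for an \emph{unknown} $\Psi_{**}\in H^{-\ha}(\pO)$. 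The content of \eqref{f**1} is the determination $\Psi_{**}=-\F_2-(\gamma^+\F_1)\partial_n a$, and the paper obtains it by computing the canonical co-normal derivative of $\F_1-\P\tilde f_{**}\in H^{1,0}(\Omega;A)$ in two ways (its \eqref{6.31**}), subtracting from \eqref{4.8Psi2**} to get $\F_2=T^+(\tilde f_{**}-\mathring E\nabla\cdot(\F_1\nabla a),\F_1)$, and then evaluating this generalised co-normal derivative by an explicit distributional integration by parts (its \eqref{4.8Psi2TT**}), which is where both $-\Psi_{**}$ and the term $-(\gamma^+\F_1)\partial_n a$ appear. Your proposal asserts that "tracking the jump relation \ldots will produce exactly the term $\gamma^*(\cdots)$" but never carries out this computation, so formula \eqref{f**1} is not actually derived.

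This gap propagates into your verification of \eqref{4.8Psi2**}. You reduce it to "apply $T^+$ to $g=0$" and write $T^+(\tilde f_{**}+\mathring E\,\mathcal R_*\tilde f_{**},\P\tilde f_{**})-\L^+\Phi_*-\F_2=T^+g$. But $\F_1$ is an arbitrary element of $H^1(\Omega)$ with no canonical co-normal derivative; the only meaningful statement is that the \emph{generalised} co-normal derivative of $\F_1$ with respect to the particular extension $\tilde f_{**}-\mathring E\nabla\cdot(\F_1\nabla a)$ of $A\F_1$ equals $\F_2$ — and that is precisely the identity the missing computation establishes, so as written the step is circular. The same issue affects your uniqueness argument, which relies on the formulas \eqref{f**1}--\eqref{Phi*F**0} being \emph{necessary}; necessity of \eqref{f**1} is exactly the unproven step. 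The remaining parts of your proposal (recovery of $\Phi_*$, the harmonicity argument for \eqref{4.8Psi1**}, continuity of $\mathcal C_{**}$ by composition of bounded maps) match the paper and are sound.
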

\begin{proof}
Let us first assume that there exist $(\tilde f_{**}, \Phi_*)\in \s{H}^{-1} (\Omega)\times {H}^{\ha}(\pO)$
satisfying equations \eqref{4.8Psi1**}, \eqref{4.8Psi2**} and find their expressions in terms of $\F_1$ and $\F_2$.
Let us re-write \eqref{4.8Psi1**} as 
\begin{align}
\label{4.8Psi1a**} \F_1-{\cal P}\tilde f_{**} = - W\Phi_* \quad
\text{in }\Omega,
\end{align}
Multiplying \eqref{4.8Psi1a**} by $a$ and applying the Laplace operator to it, we obtain, 
\begin{align}
\label{2.6'genPhi=**}
& \Delta(a\F_1-\P_\Delta \tilde f_{**})=\Delta(a\F_1)- \tilde f_{**}=-\Delta W_\Delta(a\Phi_*)=0 
\text{in}  \Omega,
\end{align}
which means
\begin{align}
\label{2.6'genPhi**}
& \Delta(a\F_1)=r_\Omega \tilde f_{**} \text{in}  \Omega
\end{align}
and
$a\F_1-\P_\Delta \tilde f_{**}\in H^{1,0}(\Omega;\Delta)$ and hence $\F_1-\P \tilde f_{**}\in H^{1,0} (\Omega;A)$.
The latter implies that the canonical conormal derivative $T^+(\F_1-\P \tilde f_{**})$ is well defined. 
It can be also written in terms of the generalised 
conormal derivatives,
\begin{multline}\label{6.31**}
T^+(\F_1-\P \tilde f_{**})
=T^+(\tilde A(\F_1-\P\tilde f_{**}),\F_1-\P \tilde f_{**})
=T^+(\mathring E A(\F_1-\P\tilde f_{**}),\F_1-\P \tilde f_{**})\\
=T^+(\mathring E \nabla\cdot (a\nabla(\F_1-\P\tilde f_{**})),\F_1-\P \tilde f_{**})\\
=T^+(\mathring E \Delta(a\F_1-\P_\Delta\tilde f_{**})
-\mathring E \nabla\cdot ((\F_1-\P\tilde f_{**})\nabla a),\F_1-\P \tilde f_{**})\\
=T^+(-\mathring E\nabla\cdot(\F_1\nabla a)
-\mathring E \mathcal R_* \tilde f_{**},\F_1-\P \tilde f_{**}),
\end{multline}
\comment{
\begin{multline}\label{6.31**}
T^+(\F_1-\P \tilde f_{**})
=(\gamma^{-1})^*[\tilde A(\F_1-\P\tilde f_{**})-\check A(\F_1-\P \tilde f_{**})]
=(\gamma^{-1})^*[\mathring E (A\F_1-r_\Omega \tilde f_{**}-\mathcal R_* \tilde f_{**})-\check A(\F_1)
+\tilde f_{**}-\tilde f_{**}+\check A\P \tilde f_{**}]\\
=(\gamma^{-1})^*[-\mathring E\nabla\cdot(\F_1\nabla a)+ \mathring E (\Delta(a\F_1)-r_\Omega \tilde 
f_{**})+\tilde 
f_{**}-\check A(\F_1)]
-T^+(\tilde f_{**}+\mathring E \mathcal R_* \tilde f_{**},\P \tilde f_{**})\\
=T^+(\tilde f_{**}-\mathring E\nabla\cdot(\F_1\nabla a),\F_1)
-T^+(\tilde f_{**}+\mathring E \mathcal R_* \tilde f_{**},\P \tilde f_{**}),
\end{multline}
}
where \eqref{2.6'genPhi**} and \eqref{R^0} were taken into account.
Applying the canonical conormal derivative operator $T^+$ to the both sides of 
equation \eqref{4.8Psi1a**} and substituting there \eqref{6.31**}, we obtain
\begin{align}
\label{6.32**}
T^+(\tilde f_{**}-\mathring E\nabla\cdot(\F_1\nabla a),\F_1)-T^+(\tilde 
f_{**}+\mathring E \mathcal R_* \tilde f_{**},\P \tilde f_{**}) 
=& -\L^+ \Phi_*
  \quad
\text{on }\partial\Omega.
\end{align}
Subtracting this from \eqref{4.8Psi2**}, we obtain
\begin{eqnarray}
\label{4.8Psi2T**}
\F_2&=&T^+(\tilde f_{**}-\mathring E\nabla\cdot(\F_1\nabla a),\F_1)
  \quad
\text{on }\partial\Omega.
\end{eqnarray}

Due to \eqref{2.6'genPhi**}, we can represent
\begin{align}\label{f**}
\tilde f_{**}=\check{\Delta}(a\F_1)+\tilde f_{1*}=\nabla\cdot\mathring E \nabla (a\F_1)-\gamma^*\Psi_{**},
\end{align}
where $\tilde f_{1*}\in H^{-1}_{\partial\Omega}$ and hence, due to e.g. \cite[Theorem 2.10]{MikJMAA2011} can be 
in turn represented as $\tilde f_{1*}=-\gamma^*\Psi_{**}$, with some $ \Psi_{**}\in H^{-\ha}(\partial\Omega)$.
Then \eqref{2.6'genPhi**} is satisfied and 
\begin{multline}
\label{4.8Psi2TT**}
T^+(\tilde f_{**}-\mathring E\nabla\cdot(\F_1\nabla a),\F_1)
=(\gamma^{-1})^*[\tilde f_{**}-\mathring E\nabla\cdot(\F_1\nabla a)-\check 
A\F_1]\\
=(\gamma^{-1})^*[\nabla\cdot\mathring E \nabla (a\F_1) -\gamma^*\Psi_{**}
-\mathring E\nabla\cdot(\F_1\nabla a)-\nabla\cdot\mathring E (a\nabla \F_1)]\\
=(\gamma^{-1})^*[\nabla\cdot\mathring E (\F_1\nabla a)-\gamma^*\Psi_{**}-\mathring E\nabla\cdot(\F_1\nabla a)]
=-\Psi_{**}-(\gamma^+\F_1)\partial_na
\end{multline}
because
\begin{multline*}
\left\langle(\gamma^{-1})^*[\nabla\cdot\mathring E (\F_1\nabla a)-\gamma^*\Psi_{**}
-\mathring E\nabla\cdot(\F_1\nabla a)], w\right\rangle_{\partial\Omega}
=\left\langle\nabla\cdot\mathring E (\F_1\nabla a)-\gamma^*\Psi_{**}
-\mathring E\nabla\cdot(\F_1\nabla a),\gamma^{-1}w\right\rangle_{\Omega}\\
=\left\langle\nabla\cdot\mathring E (\F_1\nabla a),\gamma^{-1}w\right\rangle_{\R^3}-\Psi_{**}
-\left\langle\mathring E\nabla\cdot(\F_1\nabla a),\gamma^{-1}w\right\rangle_{\Omega}\\
=-\left\langle\mathring E (\F_1\nabla a),\nabla\gamma^{-1}w\right\rangle_{\R^3}-\Psi_{**}
+\left\langle\F_1\nabla a,\nabla\gamma^{-1}w\right\rangle_{\Omega}
-\left\langle n\cdot\gamma^+(\F_1\nabla a),\gamma^+\gamma^{-1}w\right\rangle_{\Omega}
=-\left\langle(\gamma^+\F_1)\partial_na,w\right\rangle_{\partial\Omega}-\Psi_{**}.
\end{multline*}
Hence \eqref{4.8Psi2T**} reduces to
\begin{eqnarray}
\label{4.8Psi2T3}
\Psi_{**}=-\F_2-(\gamma^+\F_1)\partial_n a .
\end{eqnarray}
and \eqref{f**} to \eqref{f**1}.

Now \eqref{4.8Psi1a**} can be written in the form
\begin{eqnarray}
W_\Delta (a\Phi_*)  = \F_\Delta \quad
\text{in }\Omega,
\label{WF**}
\end{eqnarray}  
where 
\begin{eqnarray}
\label{4.8Psi1c**} \F_\Delta:=-a\F_1+{\cal P}_\Delta\tilde f_{**}
= -a\F_1+{\cal P}_\Delta[\check{\Delta}(a\F_1)+\gamma^*(\F_2+(\gamma^+\F_1)\partial_n a)], \quad
\end{eqnarray}  
is a harmonic function in $\Omega$ due to \eqref{2.6'genPhi=**}.
The trace of equation \eqref{WF**} gives 
\begin{align}\label{Phi*eq**}
 -\ha a\Phi_* + {\cal W}_\Delta(a\Phi_*)=\gamma^+\F_\Delta\  \mbox{ on } \partial\Omega.
\end{align}
Since the operator 
$ -\ha I + {\cal W}_\Delta : H^{\frac{1}{2}}(\partial\Omega) \to H^{-\frac{1}{2}}(\partial\Omega)$  is an 
isomorphism (see
e.g. \cite[Ch. XI, Part B, \S 2, Remark 8]{DaLi4}), this implies 
\begin{align}\label{Phi*F**}
 \Phi_* =&\frac{1}{a}\left(-\ha I + {\cal W}_\Delta\right)^{-1}\gamma^+\F_\Delta\nonumber\\
=&\frac{1}{a}\left(-\ha I + {\cal W}_\Delta\right)^{-1}\gamma^+
\{ -a\F_1+{\cal P}_\Delta[\check{\Delta}(a\F_1)+\gamma^*(\F_2+(\gamma^+\F_1)\partial_n a)]\}.
\end{align}

Relations \eqref{f**1},  \eqref{Phi*F**} can be written as 
$(\tilde f_*,\Phi_{*})={\cal C}_{**}(\F_1,\F_2)$,
where 
${\cal C}_{**}: H^{1}(\Omega)\times {H}^{-\ha}(\pO)\to \s{H}^{-1} (\Omega)\times {H}^{\ha}(\pO)$
is a linear continuous operator, as requested.
We still have to check that the functions $\tilde f_{**}$ and $\Phi_{*}$, given by \eqref{f**1} and  \eqref{Phi*F**}, 
satisfy equations \eqref{4.8Psi1**} and \eqref{4.8Psi2**}.
Indeed, $\Phi_{*}$ given by \eqref{Phi*F**} satisfies equation \eqref{Phi*eq**} and thus 
$\gamma^+W_\Delta (a\Phi_{*})  = \gamma^+\F_\Delta$.
Since both $W_\Delta (a\Phi_{*}) $ and  $\F_\Delta$ are harmonic functions, this implies \eqref{WF**}-\eqref{4.8Psi1c**} 
and by \eqref{f**1} also \eqref{4.8Psi1**}.
Finally, \eqref{f**1} implies by \eqref{4.8Psi2TT**} that \eqref{4.8Psi2T**} is satisfied, and adding \eqref{6.32**} to it 
leads to \eqref{4.8Psi2**}.

Let us now prove that the operator ${\cal C}_{**}$ is unique.
Indeed, let a couple
$(\tilde f_*,\Phi_*)\in \s{H}^{-1} (\Omega)\times {H}^{\ha}(\pO)$ 
be a solution of linear system \eqref{4.8Psi1**}-\eqref{4.8Psi2**} with $\F_1=0$ and $\F_2=0$. 
Then \eqref{2.6'genPhi**} implies that $r_\Omega \tilde f_{**}=0$ in $\Omega$, i.e., 
$\tilde f_{**}\in{H}^{-1}_{\partial \Omega}\subset \s{H}^{-1} (\Omega)$.
Hence, \eqref{4.8Psi2T**} reduces to
\begin{eqnarray}
\label{4.8Psi2T0**}
0&=&T^+(\tilde f_{**},0)
  \quad
\text{on }\partial\Omega.
\end{eqnarray}
By the first Green identity \eqref{Tgen}, this gives,
\begin{equation}
\label{Tgen*} 
0=\left\langle T^+(\tilde f_{**},0), \gamma^{+}v \right\rangle _{\pO}
=\langle \tilde f_{**},v \rangle_\Omega \quad  \forall\ v\in H^1 (\Omega),
\end{equation}
which implies $\tilde f_{**}=\gamma^*\Psi_*$. 
Finally, \eqref{Phi*F**} gives $\Phi_*=0$. 
Hence, any solution of non-homogeneous linear system \eqref{4.8Psi1**}-\eqref{4.8Psi2**} has only one solution, which implies uniqueness of the operator ${\cal C}_{**}$. 

\comment{
{\rd(ii)} Let us first assume that there exist $(\tilde f_{*}, \Phi_*, \Psi_*)\in \s{H}^{-1} (\Omega)\times 
{H}^{\ha}(\pO)\times 
{H}^{-\ha}(\pO)$
satisfying equations \eqref{4.8Psi1}, \eqref{4.8Psi2} and find their expressions 
in terms of $\F_1$ and $\F_2$.
Let us re-write \eqref{4.8Psi1} as 
\begin{align}
\label{4.8Psi1a} \F_1-{\cal P}\tilde f_* = V\Psi_* - W\Phi_* \quad
\text{in }\Omega,
\end{align}
Multiplying it by $a$ and applying the Laplace operator to it, we obtain, 
\begin{align}
\label{2.6'genPhi=}
& \Delta(a\F_1-\P_\Delta \tilde f_{*})=\Delta(a\F_1)- \tilde f_{*}=\Delta(V_\Delta\Psi_* - W_\Delta(a\Phi_*))=0 \text{in}  \Omega,
\end{align}
which means
\begin{align}
\label{2.6'genPhi}
& \Delta(a\F_1)=r_\Omega \tilde f_{*} \text{in}  \Omega
\end{align}
and
$a\F_1-\P_\Delta \tilde f_{*}\in H^{1,0}(\Omega;\Delta)$ and hence $\F_1-\P \tilde f_{*}\in H^{1,0} (\Omega;A)$.
The latter implies that the canonical conormal derivative $T^+(a\F_1-\P_\Delta 
\tilde f_{*})$ is well defined. It can be also written in terms of the generalised 
conormal derivatives,
\begin{multline}\label{6.31}
T^+(\F_1-\P \tilde f_{*})
=(\gamma^{-1})^*[\tilde A(\F_1-\P\tilde f_{*})-\check A(\F_1-\P \tilde f_{*})]
=(\gamma^{-1})^*[\mathring E A(\F_1-\P \tilde f_{*})-\check A(\F_1)+\check A\P \tilde f_{*}]\\
=(\gamma^{-1})^*[\mathring E (A\F_1-r_\Omega \tilde f_{*}-\mathcal R_* \tilde f_{*})-\check A(\F_1)
+\tilde f_{*}-\tilde f_{*}+\check A\P \tilde f_{*}]\\
=(\gamma^{-1})^*[-\mathring E\nabla\cdot(\F_1\nabla a)+ \mathring E (\Delta(a\F_1)-r_\Omega \tilde f_{*})+\tilde f_{*}-\check A(\F_1)]
-T^+(\tilde f_{*}+\mathring E \mathcal R_* \tilde f_{*},\P \tilde f_{*})\\
=T^+(\tilde f_{*}-\mathring E\nabla\cdot(\F_1\nabla a),\F_1)
-T^+(\tilde f_{*}+\mathring E \mathcal R_* \tilde f_{*},\P \tilde f_{*})
\end{multline}
since $\Delta(a\F_1)-r_\Omega \tilde f_{*}=0$ by \eqref{2.6'genPhi}.
Applying the canonical conormal derivative operator $T^+$ to the both sides of 
equation \eqref{4.8Psi1a} and substituting there \eqref{6.31}, we obtain
\begin{align}
\label{6.32}
T^+(\tilde f_{*}-\mathring E\nabla\cdot(\F_1\nabla a),\F_1)-T^+(\tilde 
f_{*}+\mathring E \mathcal R_* \tilde f_{*},\P \tilde f_{*}) 
=&\dfrac{1}{2}\Psi_*+{\Wp}\Psi_* -\L^+ \Phi_*
  \quad
\text{on }\partial\Omega.
\end{align}
Subtracting this from \eqref{4.8Psi2}, we obtain
\begin{eqnarray}
\label{4.8Psi2T}
\F_2&=&T^+(\tilde f_{*}-\mathring E\nabla\cdot(\F_1\nabla a),\F_1)-\Psi_*
  \quad
\text{on }\partial\Omega.
\end{eqnarray}
Let us take 
\begin{align}\label{f*}
\tilde f_{*}=\check{\Delta}(a\F_1)=\nabla\cdot\mathring E \nabla (a\F_1).
\end{align}
Then \eqref{2.6'genPhi} is satisfied and 
\begin{multline}
\label{4.8Psi2TT}
T^+(\tilde f_{*}-\mathring E\nabla\cdot(\F_1\nabla a),\F_1)
=(\gamma^{-1})^*[\tilde f_{*}-\mathring E\nabla\cdot(\F_1\nabla a)-\check 
A\F_1]\\
=(\gamma^{-1})^*[\nabla\cdot\mathring E \nabla (a\F_1)-\mathring 
E\nabla\cdot(\F_1\nabla a)-\nabla\cdot\mathring E (a\nabla \F_1)]\\
=(\gamma^{-1})^*[\nabla\cdot\mathring E (\F_1\nabla a)-\mathring 
E\nabla\cdot(\F_1\nabla a)]
=-(\gamma^+\F_1)\partial_na
\end{multline}
because
\begin{multline*}
\left\langle(\gamma^{-1})^*[\nabla\cdot\mathring E (\F_1\nabla a)-\mathring 
E\nabla\cdot(\F_1\nabla a),w\right\rangle_{\partial\Omega}
=\left\langle\nabla\cdot\mathring E (\F_1\nabla a)-\mathring 
E\nabla\cdot(\F_1\nabla a),\gamma^{-1}w\right\rangle_{\Omega}\\
=\left\langle\nabla\cdot\mathring E (\F_1\nabla a),\gamma^{-1}w\right\rangle_{\R^3}
-\left\langle\mathring E\nabla\cdot(\F_1\nabla a),\gamma^{-1}w\right\rangle_{\Omega}\\
=-\left\langle\mathring E (\F_1\nabla a),\nabla\gamma^{-1}w\right\rangle_{\R^3}
+\left\langle\F_1\nabla a,\nabla\gamma^{-1}w\right\rangle_{\Omega}
-\left\langle n\cdot\gamma^+(\F_1\nabla a),\gamma^+\gamma^{-1}w\right\rangle_{\Omega}
=-\left\langle(\gamma^+\F_1)\partial_na,w\right\rangle_{\partial\Omega}.
\end{multline*}
Hence \eqref{4.8Psi2T} reduces to
\begin{eqnarray}
\label{4.8Psi2T3}
\Psi_*=-\F_2-(\gamma^+\F_1)\partial_n a .
\end{eqnarray}

Now \eqref{4.8Psi1a} can be written in the form
\begin{eqnarray}
W_\Delta (a\Phi_*)  = \F_\Delta \quad
\text{in }\Omega,
\label{WF}
\end{eqnarray}  
where 
\begin{eqnarray}
\label{4.8Psi1c} \F_\Delta:=V_\Delta\Psi_*-a\F_1+{\cal P}_\Delta\tilde f_*
= -V_\Delta(\F_2+(\gamma^+\F_1)\partial_n a)-a\F_1+{\cal P}_\Delta\check{\Delta}(a\F_1), \quad
\end{eqnarray}  
is a harmonic function in $\Omega$ due to \eqref{2.6'genPhi=}.
The trace of equation \eqref{WF} gives 
\begin{align}\label{Phi*eq}
 -\ha a\Phi_* + {\cal W}_\Delta(a\Phi_*)=\gamma^+\F_\Delta\  \mbox{ on } \partial\Omega.
\end{align}
Since the operator 
$ -\ha I + {\cal W}_\Delta : H^{\frac{1}{2}}(\partial\Omega) \to H^{-\frac{1}{2}}(\partial\Omega)$  is an 
isomorphism (see
e.g. \cite[Ch. XI, Part B, \S 2, Remark 8]{DaLi4}), this implies 
\begin{align}\label{Phi*F}
 \Phi_* =&\frac{1}{a}\left(-\ha I + {\cal W}_\Delta\right)^{-1}\gamma^+\F_\Delta\nonumber\\
=&\frac{1}{a}\left(-\ha I + {\cal W}_\Delta\right)^{-1}\gamma^+
[ -V_\Delta(\F_2+(\gamma^+\F_1)\partial_n a)-a\F_1+{\cal P}_\Delta\check{\Delta}(a\F_1)].
\end{align}
\comment%
{
{\rd *** Old proof:}

Introducing $\tilde f_{**}=\tilde f_{*}-\gamma^*\Psi_*$ and taking into account \eqref{V-P},
equation \eqref{4.8Psi1} can be rewritten as
\begin{eqnarray}
\label{4.8Psi1**}  {\mathbf P}\tilde f_{**}={\cal P}\tilde f_{**}&=&\F_1  \quad
\text{in }\Omega.
\end{eqnarray}
On the other hand, since 
$$
\dfrac{1}{2}\Psi_*+{\Wp}\Psi_*=T^+ V\Psi_*=T^+(\mathring E\, A V\Psi_* ;V\Psi_*)
=T^+(-\mathring E\, {\mathcal R}_*\gamma^*\Psi_* ;-\mathcal P\gamma^*\Psi_*)
$$
and by the co-normal derivative linearity \eqref{GCDL}, equation \eqref{4.8Psi2} can be rewritten as
\begin{eqnarray}
\label{4.8Psi2**}
T^+({\rd \tilde f_{*}}+\mathring E\,{\mathcal R}_*\tilde f_{**},{\cal P}\tilde f_{**})-\Psi_*&=&\F_2  
\quad
\text{on }\partial\Omega.
\end{eqnarray}
By Theorem~\ref{teoremP1inv}, equation \eqref{4.8Psi1**} has the unique solution
\be\label{f**s}
\tilde f_{**}=(\mathbf P)^{-1}\F_1,
\ee 
equation \eqref{4.8Psi2**} gives
\begin{eqnarray}
\label{4.8Psi2**s}
\Psi_*&=&T^+(\tilde f_{**}+\mathring E\,{\mathcal R}_*\tilde f_{**},{\cal P}\tilde f_{**})-\F_2  \quad
\text{on }\partial\Omega,
\end{eqnarray}
and finally,
\be\label{f*s}
\tilde f_{*}=\tilde f_{**}+\gamma^*\Psi_*.
\ee
}

Relations \eqref{f*}, \eqref{4.8Psi2T3}, \eqref{Phi*F} can be written as 
$(\tilde f_*,\Phi_*,\Psi_*)
={\cal C}_*(\F_1,\F_2)$,
where 
${\cal C}_*: H^{1}(\Omega)\times {H}^{-\ha}(\pO)\to 
\s{H}^{-1} (\Omega)\times {H}^{\ha}(\pO)\times {H}^{-\ha}(\pO)$
is a linear continuous operator, as requested.
We still have to check that the functions $(\tilde f_*,\Phi_*,\Psi_*)$, given by \eqref{f*}, \eqref{4.8Psi2T3}, 
\eqref{Phi*F}, 
satisfy equations \eqref{4.8Psi1} and \eqref{4.8Psi2}.
Indeed, $\Phi_*$ given by \eqref{Phi*eq} satisfies equation \eqref{Phi*F} and thus 
$\gamma^+W_\Delta (a\Phi_*)  = \gamma^+\F_\Delta$.
Since both $W_\Delta (a\Phi_*) $ and  $\F_\Delta$ are harmonic functions, this implies \eqref{WF}-\eqref{4.8Psi1c} 
and by \eqref{f*} also \eqref{4.8Psi1}.
Finally, \eqref{4.8Psi2T3} implies by \eqref{4.8Psi2TT} that \eqref{4.8Psi2T} is satisfied, and adding \eqref{6.32} to it 
leads to \eqref{4.8Psi2}.
}
\end{proof} 
\comment{
\begin{remark}\label{6.7}
The operator ${\cal C}_*$ from Lemma~\ref{L4} is not unique.
Indeed, let the triple
$(\tilde f_*,\Phi_*,\Psi_*)\in \s{H}^{-1} (\Omega)\times {H}^{\ha}(\pO)\times {H}^{-\ha}(\pO)$ 
be a solution of linear system \eqref{4.8Psi1}-\eqref{4.8Psi2} with $\F_1=0$ and $\F_2=0$. 
Then \eqref{2.6'genPhi} implies that $r_\Omega \tilde f_{*}=0$ in $\Omega$, i.e., 
$\tilde f_*\in{H}^{-1}_{\partial \Omega}\subset \s{H}^{-1} (\Omega)$.
Hence, \eqref{4.8Psi2T} reduces to
\begin{eqnarray}
\label{4.8Psi2T0}
0&=&T^+(\tilde f_{*},0)-\Psi_*
  \quad
\text{on }\partial\Omega.
\end{eqnarray}
By the first Green identity \eqref{Tgen}, this gives,
\begin{equation}
\label{Tgen*} 
0=\left\langle T^+(\tilde f_*,0)-\Psi_*, \gamma^{+}v \right\rangle _{\pO}
=\langle \tilde f_*-\gamma^*\Psi_*,v \rangle_\Omega \quad  \forall\ v\in H^1 (\Omega),
\end{equation}
which implies $\tilde f_*=\gamma^*\Psi_*$. 
Finally, \eqref{Phi*F} gives $\Phi_*=0$. 
Hence, any solution of linear system \eqref{4.8Psi1}-\eqref{4.8Psi2} with $\F_1=0$ and $\F_2=0$ has form
$(\tilde f_*,\Phi_*,\Psi_*)=(\gamma^*\Psi_*^0, 0, \Psi_*^0)$ for some $\Psi_*^0\in {H}^{-\ha}(\pO)$. 

On the other hand, the triple $(\tilde f_*,\Phi_*,\Psi_*)=(\gamma^*\Psi_*^0, 0, \Psi_*^0)$ with any 
$\Psi_*^0\in {H}^{-\ha}(\pO)$  solves \eqref{4.8Psi1}-\eqref{4.8Psi2} with $\F_1=0$ and $\F_2=0$ since then
${\cal P}\tilde f_* + V\Psi_*={\cal P}\gamma^*\Psi_*^0  -{\cal P}\gamma^*\Psi_*^0=0$
and 
\begin{multline*}
T^+(\tilde f_*+\mathring E\,{\mathcal R}_*\tilde f_*,{\cal P}\tilde f_*) -\frac{1}{2}\Psi_*+{\Wp}\Psi_*
=T^+(\tilde f_*+\mathring E\,{\mathcal R}_*\tilde f_*,{\cal P}\tilde f_*) -\Psi_* +T^+{V}\Psi_*\\
=(\gamma^{-1})^*(\tilde f_*+\mathring E\,{\mathcal R}_*\tilde f_*-\check A{\cal P}\tilde f_* )
-(\gamma^{-1})^*\gamma^*\Psi_* -(\gamma^{-1})^*(\mathring E\,{\mathcal R}_*\Psi_*
-\check A{\cal P}\Psi_*)\\
=(\gamma^{-1})^*(\tilde f_*-\gamma^*\Psi_*
+\mathring E\,{\mathcal R}_*(\tilde f_*-\gamma^*\Psi_*)-\check A{\cal P}(\tilde f_* -\gamma^*\Psi_*))=0
\end{multline*}
\end{remark}
}
\begin{theorem}\label{T3.521HF}
The cokernel of operator \eqref{mfN1} is spanned over the functional
\begin{align}\label{SSH10C}
g^{*1}:=((\gamma^{+})^*\partial_n a,1)^\top 
\end{align}
in  $\widetilde H^{-1}(\Omega )\times H^{\ha}(\pO)$, i.e.,
$
g^{*1}(\mathcal F_1,\mathcal F_2)
=\langle (\gamma^+\F_1)\partial_n a +\F_2,\gamma^+ u^0\rangle_{\partial\Omega},
$
where $u^0=1$.
\end{theorem}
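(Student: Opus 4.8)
The plan is to use Theorem~\ref{invertibilityN}: operator \eqref{mfN1} is Fredholm with zero index and $\dim\ker\mathfrak N^1=1$, hence $\mathrm{Ran}\,\mathfrak N^1$ has codimension one and its annihilator in $\widetilde H^{-1}(\Omega)\times H^{\ha}(\pO)$ is one-dimensional. Thus it suffices to prove that the functional $g^{*1}$ from \eqref{SSH10C}, acting by $g^{*1}(\mathcal F_1,\mathcal F_2)=\langle(\gamma^+\mathcal F_1)\partial_n a+\mathcal F_2,\,\gamma^+u^0\rangle_{\pO}$ with $u^0=1$, (a)~is not identically zero and (b)~annihilates $\mathrm{Ran}\,\mathfrak N^1$. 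Part~(a) is immediate, since $g^{*1}(0,1)=\langle 1,1\rangle_{\pO}=|\pO|\ne 0$.

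For part~(b), I would fix $(u,\varphi)\in H^1(\Omega)\times H^{\ha}(\pO)$, put $(\mathcal F_1,\mathcal F_2)^\top:=\mathfrak N^1(u,\varphi)^\top$, so that $\mathcal F_1=u+{\cal R}u+W\varphi$ in $\Omega$ and $\mathcal F_2=T^+{\cal R}u+{\cal L}^+\varphi$ on $\pO$, and set $\phi:=\varphi-\gamma^+u\in H^{\ha}(\pO)$. The pivotal observation is that, because $r_\Omega\check Au=Au$, Definition~\ref{GCDd} gives $T^+(\check Au,u)=(\gamma^{-1})^*(\check Au-\check Au)=0$. Then the third Green identity \eqref{4.G3til} yields $u+{\cal R}u=\P\check Au-W\gamma^+u$, while Corollary~\ref{IDequivGenDir} applied to the extension $\tilde f=\check Au$ (so that the $T^+(\check Au,u)$-terms in \eqref{4.2T+Gen} drop out) reduces \eqref{4.2T+Gen} to $T^+{\cal R}u+{\cal L}^+\gamma^+u=T^+(\check Au+\mathring E\,{\cal R}_*\check Au,\P\check Au)$. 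Substituting these relations into the formulas for $\mathcal F_1,\mathcal F_2$ and using linearity (${\cal L}^+\varphi-{\cal L}^+\gamma^+u={\cal L}^+\phi$), I obtain
\begin{align}\label{plan-rewrite}
\mathcal F_1=\P\check Au+W\phi\mbox{ in }\Omega,\qquad
\mathcal F_2=T^+(\check Au+\mathring E\,{\cal R}_*\check Au,\P\check Au)+{\cal L}^+\phi\mbox{ on }\pO,
\end{align}
which is precisely the system \eqref{4.8Psi1**}--\eqref{4.8Psi2**} of Lemma~\ref{L4} with $\tilde f_{**}=\check Au$ and $\Phi_*=-\phi$.

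By the uniqueness asserted in Lemma~\ref{L4}, the couple $(\check Au,-\phi)$ coincides with ${\cal C}_{**}(\mathcal F_1,\mathcal F_2)$, so formula \eqref{f**1} gives $\check Au=\check\Delta(a\mathcal F_1)+\gamma^*\big(\mathcal F_2+(\gamma^+\mathcal F_1)\partial_n a\big)$ with $\check\Delta(a\mathcal F_1)=\nabla\cdot\mathring E\nabla(a\mathcal F_1)$. I would then test this identity on the constant function $1\in H^1(\Omega)$ in the $[H^1(\Omega)]^*$--$H^1(\Omega)$ duality, using \eqref{Ltil} in the form $\langle\check Au,1\rangle_\Omega=-{\cal E}(u,1)=0$, the analogous relation $\langle\check\Delta(a\mathcal F_1),1\rangle_\Omega=-\int_\Omega\nabla(a\mathcal F_1)\cdot\nabla 1\,dx=0$, and the adjointness $\langle\gamma^*v,1\rangle_\Omega=\langle v,\gamma^+1\rangle_{\pO}=\langle v,1\rangle_{\pO}$, to arrive at
\begin{align}\label{plan-test}
0=\langle\check Au,1\rangle_\Omega
=\langle\check\Delta(a\mathcal F_1),1\rangle_\Omega+\langle\mathcal F_2+(\gamma^+\mathcal F_1)\partial_n a,\,1\rangle_{\pO}
=g^{*1}(\mathcal F_1,\mathcal F_2).
\end{align}
Hence $g^{*1}$ vanishes on $\mathrm{Ran}\,\mathfrak N^1$; together with (a) and the one-dimensionality of the annihilator, this shows the cokernel of \eqref{mfN1} is spanned by $g^{*1}$, as claimed.

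I expect the main obstacle to be the bookkeeping behind \eqref{plan-rewrite}: one has to check that $\check Au\in\widetilde H^{-1}(\Omega)$ and ${\cal R}_*\check Au\in L_2(\Omega)$ (so $\mathring E\,{\cal R}_*\check Au\in\widetilde H^{0}(\Omega)$), which guarantees that all the canonical and generalised co-normal derivatives occurring are well defined and that Corollary~\ref{IDequivGenDir} applies verbatim to $\tilde f=\check Au$, and to use the cancellations $T^+(\check Au,u)=0$ and ${\cal L}^+\varphi-{\cal L}^+\gamma^+u={\cal L}^+\phi$ consistently. Once \eqref{plan-rewrite} is established, the rest is a direct appeal to Lemma~\ref{L4} together with the triviality $\nabla 1=0$, which annihilates both ${\cal E}(\,\cdot\,,1)$ and $\int_\Omega\nabla(a\,\cdot\,)\cdot\nabla 1\,dx$.
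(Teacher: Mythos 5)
Your proof is correct, and it reaches the conclusion by a genuinely different route from the paper's, even though both arguments ultimately funnel through Lemma~\ref{L4} and the pairing of formula \eqref{f**1} against $u^0=1$. The paper treats an \emph{arbitrary} right-hand side $(\F_1,\F_2)$: it uses the existence part of Lemma~\ref{L4} to recast the system as (N1) with $\psi_0=0$ and data $\tilde f_{**}$, then quotes the solvability criterion of Theorem~\ref{equivalenceN}(iii), so that $g^{*1}(\F_1,\F_2)=\langle\tilde f_{**},u^0\rangle_\Omega=0$ comes out at once as a \emph{necessary and sufficient} condition for membership in the range. You instead prove only the annihilation of $\mathrm{Ran}\,\mathfrak N^1$ and close the argument with the Fredholm dimension count from Theorem~\ref{invertibilityN}; for the annihilation you identify the Lemma~\ref{L4} decomposition of a range element explicitly as $(\tilde f_{**},\Phi_*)=(\check Au,\,-(\varphi-\gamma^+u))$, via the third Green identity \eqref{4.G3til}, the cancellation $T^+(\check Au,u)=(\gamma^{-1})^*(\check Au-\check Au)=0$, and the \emph{uniqueness} part of Lemma~\ref{L4}, after which $g^{*1}(\F_1,\F_2)=\langle\check Au,1\rangle_\Omega=-\E(u,1)=0$ is immediate. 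Your version buys a transparent structural description of the range inside the Lemma~\ref{L4} parametrization and bypasses Theorem~\ref{teoremP1inv} and the direct appeal to the solvability condition \eqref{3.suf} (though the kernel dimension you import from Theorem~\ref{invertibilityN} itself rests on Theorem~\ref{equivalenceN}(iii), so the dependence on the BVP theory is not really removed); the paper's version delivers the explicit ``solvable if and only if'' statement, which is what feeds the perturbation construction of Section~\ref{PSBDIE}. The prerequisites you flag ($\check Au\in\widetilde H^{-1}(\Omega)$, ${\cal R}_*\check Au\in L_2(\Omega)$, linearity of $\L^+$) are all available in the paper, so the bookkeeping behind \eqref{plan-rewrite} goes through.
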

\begin{proof} 

Let us consider the equation $\mathfrak N^{1}\mathcal U=(\mathcal F_1,\mathcal F_2)^\top$, i.e., the BDIE system (N1),
\begin{eqnarray}
\label{2.44HF}
 u +{\mathcal R}u  +W \varphi &=& \mathcal F_1  \quad \mbox{in}\quad   \Omega,
\\[1ex]
 \label{2.45HF}
 T^+{\mathcal R} u +\mathcal L^+\varphi
  &=&   \mathcal F_2  \quad   \mbox{on}\quad  \partial\Omega,
\end{eqnarray}
with arbitrary $(\mathcal F_1,\mathcal F_2)\in H^{1}(\Omega )\times H^{-\ha}(\pO)$, for $(u,\varphi)\in H^{1}(\Omega )\times H^{\ha}(\pO)$. 
By Lemma~\ref{L4},  the right hand side of the system has form \eqref{4.8Psi1**}-\eqref{4.8Psi2**}, i.e., 
system \eqref{2.44HF}-\eqref{2.45HF} reduces to
\begin{eqnarray}
\label{2.44HFr}
 u +{\mathcal R}u  +W (\varphi+\Phi_*) &=& {\cal P}\tilde f_{**}  \quad \mbox{in}\quad   \Omega,
\\[1ex]
 \label{2.45HFr}
 T^+{\mathcal R} u +\mathcal L^+(\varphi+\Phi_*) 
  &=&  T^+(\tilde f_{**}+\mathring E\,{\mathcal R}_*\tilde f_{**},{\cal P}\tilde f_{**})  \quad   \mbox{on}\quad  \partial\Omega,
\end{eqnarray}
where the couple $(\tilde f_{**}, \Phi_*)\in \s{H}^{-1} (\Omega)\times {H}^{\ha}(\pO)$ is given by \eqref{f**1}, \eqref{Phi*F**0}.
Up to the notations, system \eqref{2.44HFr}-\eqref{2.45HFr} is the same as in \eqref{FN1} with $\psi_0=0$. 
Then Theorems \ref{equivalenceN}(iii) and \ref{teoremP1inv} imply that BDIE system \eqref{2.44HFr}-\eqref{2.45HFr} and hence \eqref{2.44HF}-\eqref{2.45HF} is solvable if and only if
\begin{multline}\label{3.suf*}
\langle \tilde f_{**},u^0\rangle_\Omega 
=
\langle (\check{\Delta}(a\F_1)+\gamma^*(\F_2+(\gamma^+\F_1)\partial_n a),u^0\rangle_\Omega
=
\langle \nabla\cdot\mathring E \nabla (a\F_1) +\gamma^*(\F_2+(\gamma^+\F_1)\partial_n a),u^0\rangle_{\R^3}\\
=
-\langle \mathring E \nabla (a\F_1) ,\nabla u^0\rangle_{\R^3}
+\langle \F_2+(\gamma^+\F_1)\partial_n a,\gamma^+ u^0\rangle_{\partial\Omega} 
=\langle (\gamma^+\F_1)\partial_n a +\F_2,\gamma^+ u^0\rangle_{\partial\Omega}
=0,
\end{multline}
where we took into account that $\nabla u_0=0$ in $\R^3$.

Thus the functional $g^{*1}$ defined by \eqref{SSH10C} generates the necessary and sufficient solvability condition of equation $\mathfrak N^{1}\mathcal U=(\mathcal F_1,\mathcal F_2)^\top$. Hence $g^{*1}$ is a basis of the cokernel of  $\mathfrak N^{1}$.
\end{proof}
\begin{theorem}\label{T3.521HF2}
The cokernel of operator \eqref{mfN2} is spanned over the functional
\begin{align}\label{SSH10C2}
g^{*2}:=\left(\begin{array}{c}
-a\gamma^{+*}\left(\frac{1}{2}+\W'_\Delta\right)\mathcal{V}_{\Delta}^{-1}\gamma^+u^0
\\[1ex] 
-a\left(\frac{1}{2}-\W'_\Delta\right)\mathcal{V}_{\Delta}^{-1}\gamma^+u^0
\end{array}\right)
\end{align}
in  $\widetilde H^{-1}(\Omega )\times H^{-\ha}(\pO)$, i.e.,
$$
g^{*2}(\mathcal F_1,\mathcal F_2)
=\left\langle -a\gamma^{+*}\left(\frac{1}{2}+\W'_\Delta\right)\mathcal{V}_{\Delta}^{-1}\gamma^+u^0,\F_1\right\rangle_{\Omega}
+\left\langle -a\left(\frac{1}{2}-\W'_\Delta\right)\mathcal{V}_{\Delta}^{-1}\gamma^+u^0,\F_2 \right\rangle_{\partial\Omega},
$$
where $u^0(x)=1$.
\end{theorem}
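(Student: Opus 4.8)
The plan is to identify the one-dimensional cokernel of the Fredholm, zero-index operator $\mathfrak N^2$ (Theorem~\ref{invertibilityN}) by computing, for an arbitrary $(\mathcal F_1,\mathcal F_2)\in H^1(\Omega)\times H^{\ha}(\pO)$, the solvability condition of the BDIE system (N2), $\mathfrak N^2\mathcal U=(\mathcal F_1,\mathcal F_2)^\top$, and showing that it is exactly $g^{*2}(\mathcal F_1,\mathcal F_2)=0$. The argument parallels the proof of Theorem~\ref{T3.521HF}, but the reduction to a standard system is carried out by elimination instead of by the splitting Lemma~\ref{L4}.

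First I would set $\Phi_*:=\gamma^+\mathcal F_1-\mathcal F_2\in H^{\ha}(\pO)$ and $\tilde{\mathcal G}:=\mathcal F_1+W\Phi_*\in H^1(\Omega)$. Taking the trace of \eqref{Basic1-2} (every term lies in $H^1(\Omega)$), using the jump relation \eqref{3.9} and subtracting \eqref{Basic2-2} gives $\gamma^+u-\varphi=\Phi_*$; substituting $\varphi=\gamma^+u-\Phi_*$ back into \eqref{Basic1-2} yields $u+\mathcal R u+W\gamma^+u=\tilde{\mathcal G}$, and one checks directly that with this $\varphi$ equation \eqref{Basic2-2} then holds automatically. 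So $(u,\varphi)$ solves (N2) iff $\varphi=\gamma^+u-\Phi_*$ and $u+\mathcal R u+W\gamma^+u=\tilde{\mathcal G}$ in $\Omega$, i.e., by the third Green identity \eqref{4.G3til}, iff $\mathcal P\check Au=\tilde{\mathcal G}$; and since $r_\Omega\mathbf P:\widetilde H^{-1}(\Omega)\to H^1(\Omega)$ is an isomorphism (Theorem~\ref{teoremP1inv}, $s=1$), this is equivalent to $\check Au=\tilde f_{**}:=(\mathbf P)^{-1}\tilde{\mathcal G}\in\widetilde H^{-1}(\Omega)$, with $(\mathbf P)^{-1}$ given by \eqref{B.194a}. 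In turn $\check Au=\tilde f_{**}$ is equivalent to $u$ solving the Neumann problem \eqref{2.6N}--\eqref{2.8} with data $(\tilde f_{**},0)$: indeed $r_\Omega\check Au=Au$, so $\check Au=\tilde f_{**}$ forces $Au=r_\Omega\tilde f_{**}$ and $T^+(\tilde f_{**},u)=(\gamma^{-1})^*(\tilde f_{**}-\check Au)=0$, and conversely if $Au=r_\Omega\tilde f_{**}$ then $\tilde f_{**}-\check Au$ is supported in $\pO$, hence equals $\gamma^*\mu$ and $T^+(\tilde f_{**},u)=\mu$, so that $T^+(\tilde f_{**},u)=0$ yields $\check Au=\tilde f_{**}$. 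By Theorem~\ref{Rem1N}(ii) (with $u^0=1$, $\psi_0=0$) this Neumann problem is solvable iff $\langle\tilde f_{**},u^0\rangle_\Omega=0$, and its solutions then differ by constants, consistent with $\ker\mathfrak N^2=\mathrm{span}\{(u^0,\varphi^0)\}$. Hence (N2) is solvable iff $\langle\tilde f_{**},u^0\rangle_\Omega=0$.

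It remains to evaluate $\langle\tilde f_{**},u^0\rangle_\Omega$ in terms of $\mathcal F_1,\mathcal F_2$. Using \eqref{B.194a}, the term $\Delta\mathring E(I-V_\Delta\mathcal V_\Delta^{-1}\gamma^+)(a\tilde{\mathcal G})$ has zero trace, hence lies in $\widetilde H^1(\Omega)$ and pairs to zero with the constant $u^0$, while the $-\gamma^*\mathcal V_\Delta^{-1}\gamma^+$ term gives $\langle\tilde f_{**},u^0\rangle_\Omega=-\langle\mathcal V_\Delta^{-1}\gamma^+(a\tilde{\mathcal G}),\gamma^+u^0\rangle_\pO=-\langle\gamma^+(a\tilde{\mathcal G}),\sigma\rangle_\pO$, where $\sigma:=\mathcal V_\Delta^{-1}\gamma^+u^0$ and $\mathcal V_\Delta$ is self-adjoint. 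Writing $aW\Phi_*=W_\Delta(a\Phi_*)$ by \eqref{VWa}, applying the jump relation \eqref{3.9} for $W_\Delta$ and the adjoint identity $(\mathcal W_\Delta)^*=\W'_\Delta$, and using $a\Phi_*=\gamma^+(a\mathcal F_1)-a\mathcal F_2$, one gets $\gamma^+(a\tilde{\mathcal G})=\gamma^+(a\mathcal F_1)+(-\ha I+\mathcal W_\Delta)(a\Phi_*)$, collects the $\gamma^+(a\mathcal F_1)$-contributions into the combination $(\ha I+\W'_\Delta)\sigma$, and is left with $\langle\tilde f_{**},u^0\rangle_\Omega=-\langle a\gamma^{+*}(\ha I+\W'_\Delta)\sigma,\mathcal F_1\rangle_\Omega-\langle a(\ha I-\W'_\Delta)\sigma,\mathcal F_2\rangle_\pO=g^{*2}(\mathcal F_1,\mathcal F_2)$ with $\sigma=\mathcal V_\Delta^{-1}\gamma^+u^0$, which is precisely the functional \eqref{SSH10C2}. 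Therefore (N2) is solvable iff $g^{*2}(\mathcal F_1,\mathcal F_2)=0$, so $g^{*2}$ spans the cokernel of $\mathfrak N^2$. (One may also note $V_\Delta\sigma\equiv1$ in $\Omega$, whence $(\ha I+\W'_\Delta)\sigma=0$: the first component of $g^{*2}$ is in fact the zero functional and the condition reduces to $\langle a\mathcal F_2,\sigma\rangle_\pO=0$.)

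The step I expect to be the main obstacle is this last computation: carrying the jump relations, the adjoints of $\mathcal W_\Delta$ and $\mathcal V_\Delta$, and the commutation of $\gamma^{+*}$ with multiplication by $a$ through $\langle\tilde f_{**},u^0\rangle_\Omega$ without sign slips, so that the expression collapses to exactly \eqref{SSH10C2}. A secondary subtlety is justifying in the reduction that $\mathcal P\check Au=\tilde{\mathcal G}$ yields the \emph{global} identity $\check Au=\tilde f_{**}$ (not merely $Au=r_\Omega\tilde f_{**}$ on $\Omega$), since it is that identity, through $T^+(\tilde f_{**},u)=0$, which makes the Neumann solvability criterion of Theorem~\ref{Rem1N}(ii) applicable.
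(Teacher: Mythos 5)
Your proof is correct and follows essentially the same route as the paper's: the substitution $\varphi'=\varphi-(\F_2-\gamma^+\F_1)$ (your elimination of $\varphi$ is the same shift), the representation of the resulting first right-hand side as $\mathcal P\tilde f_*$ via Theorem~\ref{teoremP1inv} and formula \eqref{B.194a}, and the identical duality computation with $\mathcal V_\Delta^{-1}$ and $\W'_\Delta$; the only (harmless) deviation is that you re-derive the solvability criterion through the third Green identity and Theorem~\ref{Rem1N}(ii) rather than citing Theorem~\ref{equivalenceN}(iii) for the transformed system. Your closing observation that $\left(\frac{1}{2} I+\W'_\Delta\right)\mathcal V_\Delta^{-1}\gamma^+u^0=0$ (since $V_\Delta\mathcal V_\Delta^{-1}1\equiv 1$ in $\Omega$ has vanishing interior normal derivative), so that the first component of $g^{*2}$ is the zero functional and the condition reduces to $\langle a\F_2,\mathcal V_\Delta^{-1}\gamma^+u^0\rangle_{\pO}=0$, is a correct simplification that the paper does not record.
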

\begin{proof} 
Let us consider the equation $\mathfrak N^{2}\mathcal U=(\mathcal F_1,\mathcal F_2)^\top$, i.e., the BDIE system (N2),
\begin{eqnarray}
\label{2.44HF2}
 u +{\mathcal R}u  +W \varphi &=& \mathcal F_1  \quad \mbox{in}\quad   \Omega,
\\[1ex]
 \label{2.45HF2}
 \frac{1}{2}\varphi+\gamma^+{\mathcal R} u   +{\mathcal W} \varphi
  &=&   \mathcal F_2  \quad   \mbox{on}\quad  \partial\Omega,
\end{eqnarray}
with arbitrary $(\mathcal F_1,\mathcal F_2)\in H^{1}(\Omega )\times H^{\ha}(\pO)$ for $(u,\varphi)\in H^{1}(\Omega )\times H^{\ha}(\pO)$. 

Introducing the new variable, $\varphi'=\varphi-(\F_2-\gamma^+\F_1)$, BDIE system \eqref{2.44HF2}-\eqref{2.45HF2} takes form
\begin{eqnarray}
\label{2.44HF2'}
 u +{\mathcal R}u  +W \varphi' &=& \mathcal F'_1  \quad \mbox{in}\quad   \Omega,
\\[1ex]
 \label{2.45HF2'}
 \frac{1}{2}\varphi'+\gamma^+{\mathcal R} u   +{\mathcal W} \varphi'
  &=&   \gamma^+\mathcal F_1'  \quad   \mbox{on}\quad  \partial\Omega,
\end{eqnarray}
where 
$$
\mathcal F_1'=\mathcal F_1-W(\F_2-\gamma^+\F_1)\in H^{1}(\Omega ).
$$
Let us recall that $\mathcal P=r_\Omega\mathbf P:\widetilde H^{s-2}(\Omega)\to H^{s}(\Omega)$ and then by Theorem~\ref{teoremP1inv}, the operator
$\mathcal P^{-1}=(r_\Omega\mathbf P)^{-1}:H^{s}(\Omega)\to \widetilde H^{s-2}(\Omega)$ is continuous for $\ha<s<\tha$.
Hence,  we can always represent $\mathcal F_1'=\mathcal P \tilde f_*$, with 
$$ \tilde f_*=[\Delta \mathring E (I- r_\Omega V_\Delta \mathcal{V}_{\Delta}^{-1}\gamma^+)
-\gamma^{+*}\mathcal{V}_{\Delta}^{-1}\gamma^+](a\mathcal F_1')\in \widetilde H^{-1}(\Omega ).
$$
For $\mathcal F_1'=\mathcal P \tilde f_*$, the right hand side of BDIE system \eqref{2.44HF2}-\eqref{2.45HF2} is the same as in \eqref{FN2} with $\tilde f=\tilde f_*$ and $\psi_0=0$. Then Theorem~\ref{equivalenceN}(iii) implies that BDIE system \eqref{2.44HF2'}-\eqref{2.45HF2'} is solvable if and only if
\begin{multline}\label{3.suf*'}
\langle \tilde f_*,u^0\rangle_\Omega 
=
\langle [\Delta \mathring E (I- r_\Omega V_\Delta \mathcal{V}_{\Delta}^{-1}\gamma^+)
-\gamma^{+*}\mathcal{V}_{\Delta}^{-1}\gamma^+](a\F'_1),u^0\rangle_{\R^3}\\
=
\langle \mathring E (I- r_\Omega V_\Delta \mathcal{V}_{\Delta}^{-1}\gamma^+)(a\F'_1),\Delta u^0\rangle_{\R^3}
-\langle \gamma^{+*}\mathcal{V}_{\Delta}^{-1}\gamma^+(a\F'_1),u^0\rangle_{\R^3}\\
=-\left\langle\gamma^+(a\F'_1),\mathcal{V}_{\Delta}^{-1}\gamma^+u^0\right\rangle_{\partial\Omega}\\
=-\left\langle\frac{1}{2}[\gamma^+(a\F_1)+(a\F_2)]-\W_\Delta[a(\F_2-\gamma^+\F_1)] ,\mathcal{V}_{\Delta}^{-1}\gamma^+u^0\right\rangle_{\partial\Omega}
\\
=\left\langle -a\gamma^{+*}\left(\frac{1}{2}+\W'_\Delta\right)\mathcal{V}_{\Delta}^{-1}\gamma^+u^0,\F_1\right\rangle_{\Omega}
+\left\langle -a\left(\frac{1}{2}-\W'_\Delta\right)\mathcal{V}_{\Delta}^{-1}\gamma^+u^0,\F_2 \right\rangle_{\partial\Omega}
=0.
\end{multline}
Thus the functional $g^{*2}$ defined by \eqref{SSH10C2} generates the necessary and sufficient solvability condition of equation $\mathfrak N^{2}\mathcal U=(\mathcal F_1,\mathcal F_2)^\top$. Hence $g^{*2}$ is a basis of the cokernel of  $\mathfrak N^{2}$.
\end{proof}

\subsection{Perturbed segregated BDIE systems for the Neumann problem}\label{PSBDIE}

Theorem \ref{equivalenceN} implies, that even when the solvability condition \eqref{3.suf} is satisfied, the solutions of both BDIE systems, (N1) and (N2), are not unique. By Theorem~\ref{invertibilityN}, in turn, the BDIE left hand side operators, $\mathfrak N^1$ and $\mathfrak N^2$, have non-zero kernels and thus are not invertible. 
To find a solution $(u,\varphi)$ from uniquely solvable BDIE systems with continuously invertible left hand side operators,
let us consider, following \cite{MikPert1999}, some BDIE systems obtained from  (N1) and (N2) by finite-dimensional operator perturbations. Note that other choices of the perturbing operators are also possible.

Below we use the notations $\mathcal U^N=(u,\varphi)^\top$ and $|\pO|:=\int_\pO dS$.

\subsubsection{Perturbation of BDIE system (N1)}\label{S6.3.1}

Let us introduce the perturbed counterparts of the BDIE system  (N1),
\begin{align}\label{N1-p}
\hat{\mathfrak N}^{1}\mathcal U^N=\mathcal F^{N1},
\end{align}
where
$$
\hat{\mathfrak N}^{1}:={\mathfrak N}^{1}+\mathring{\mathfrak N}^{1}
\ \text{and}\
\mathring{\mathfrak N}^1\mathcal U^N(y):=
g^0(\mathcal U^N)\,{\mathcal G}^1(y)=\frac{1}{|\pO|}\int_\pO \varphi(x)dS\ 
\left(\begin{array}{c}0\\ 1\end{array}\right) ,
$$
that is, 
\bes 
g^0(\mathcal U^N):=\frac{1}{|\pO|}\int_\pO \varphi(x)dS,
\quad
{\mathcal G}^1(y):=\left(\begin{array}{c}0\\ 1\end{array}\right).
\ees

For the functional $g^{*1}$ given by \eqref{SSH10C} in Theorem~\ref{T3.521HF}, 
$g^{*1}({\mathcal G}^1)=|\pO|$,
while
$g^0(\mathcal U^0)=1.$
Hence Theorem~\ref{L1} in Appendix, extracted from \cite{MikPert1999}, implies the following assertion. 
 \begin{theorem}
(i) The operator 
$
\hat{\mathfrak N}^{1}: H^1(\Omega )\times
H^{\frac{1}{2}}(\partial\Omega) \to H^1(\Omega )\times
H^{-\frac{1}{2}}(\partial\Omega)
$
is continuous and continuously invertable.

(ii)
If condition $g^{*1}(\mathcal F^{N1})=0$ (or condition \eqref{3.suf} for $\F^{N1}$ in form \eqref{FN1}) is satisfied, then the unique solution of  perturbed BDIDE system \eqref{N1-p} gives a solution of original BDIE system (D2) such that
\bes 
g^0(\mathcal U^N)=\frac{1}{|\pO|}\int_\pO \gamma^+ u dS=\frac{1}{|\pO|}\int_\pO \varphi dS=0.
\ees
\end{theorem}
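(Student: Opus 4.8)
The plan is to obtain both claims as a direct instantiation of the abstract finite–dimensional perturbation result, Theorem~\ref{L1} in the Appendix (taken from \cite{MikPert1999}), applied to the operator $\mathfrak N^1$ of \eqref{mfN1}. First I would observe that $\hat{\mathfrak N}^1$ is continuous: it is the sum of the continuous operator $\mathfrak N^1\colon H^1(\Omega)\times H^{1/2}(\pO)\to H^1(\Omega)\times H^{-1/2}(\pO)$ from Theorem~\ref{invertibilityN} and the rank–one operator $\mathring{\mathfrak N}^1\mathcal U^N=g^0(\mathcal U^N)\,\mathcal G^1$, which is bounded between these same spaces because $g^0(\mathcal U^N)=\frac1{|\pO|}\langle 1,\varphi\rangle_\pO$ is a bounded linear functional on $H^1(\Omega)\times H^{1/2}(\pO)$ (as $1\in L_2(\pO)\subset H^{-1/2}(\pO)$) and $\mathcal G^1=(0,1)^\top\in H^1(\Omega)\times H^{-1/2}(\pO)$.

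To prove invertibility in (i), I would verify the hypotheses of Theorem~\ref{L1}. By Theorem~\ref{invertibilityN}, $\mathfrak N^1$ is a Fredholm operator of index zero whose kernel is one–dimensional, $\ker\mathfrak N^1=\mathrm{span}\{\mathcal U^0\}$ with $\mathcal U^0=(1,1)^\top$; by Theorem~\ref{T3.521HF}, its cokernel is generated by the functional $g^{*1}$ of \eqref{SSH10C}. The remaining non–degeneracy conditions are precisely the two identities recorded just before the statement, $g^{*1}(\mathcal G^1)=|\pO|\neq0$ and $g^0(\mathcal U^0)=1\neq0$. Theorem~\ref{L1} then immediately gives that $\hat{\mathfrak N}^1$ is continuously invertible.

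For (ii), suppose $g^{*1}(\mathcal F^{N1})=0$ and let $\mathcal U^N=(u,\varphi)^\top$ be the unique solution of \eqref{N1-p}. Applying $g^{*1}$ to $\mathfrak N^1\mathcal U^N+g^0(\mathcal U^N)\,\mathcal G^1=\mathcal F^{N1}$ and using that $g^{*1}$ annihilates the range of $\mathfrak N^1$ together with $g^{*1}(\mathcal F^{N1})=0$, one obtains $g^0(\mathcal U^N)\,g^{*1}(\mathcal G^1)=0$; since $g^{*1}(\mathcal G^1)=|\pO|\neq0$ this forces $g^0(\mathcal U^N)=0$, hence $\mathring{\mathfrak N}^1\mathcal U^N=0$ and therefore $\mathfrak N^1\mathcal U^N=\mathcal F^{N1}$, i.e.\ $\mathcal U^N$ solves the original BDIE system (N1). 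By Theorem~\ref{equivalenceN}(ii) we then have $\gamma^+u=\varphi$, so $\frac1{|\pO|}\int_\pO\gamma^+u\,dS=\frac1{|\pO|}\int_\pO\varphi\,dS=g^0(\mathcal U^N)=0$, which is the asserted normalisation. As for the parenthetical equivalence, for $\mathcal F^{N1}$ of the form \eqref{FN1} both ``$g^{*1}(\mathcal F^{N1})=0$'' (by Theorem~\ref{T3.521HF}) and condition \eqref{3.suf} (by Theorem~\ref{equivalenceN}(iii)) are non-trivial necessary and sufficient conditions for solvability of the same system $\mathfrak N^1\mathcal U^N=\mathcal F^{N1}$, hence they coincide; equivalently, one checks by substituting \eqref{FN1} into \eqref{SSH10C} and simplifying with the first Green identity \eqref{Tgen} and with \eqref{DeltaPg} that $g^{*1}(\mathcal F^{N1})$ equals the left-hand side of \eqref{3.suf}.

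I do not anticipate a serious obstacle: the proof is essentially a lookup against the abstract perturbation scheme, since the Fredholm property, kernel and cokernel of $\mathfrak N^1$ are already established. The only points requiring a little care are the bookkeeping that $g^0\in(H^1(\Omega)\times H^{1/2}(\pO))^*$ and $\mathcal G^1\in H^1(\Omega)\times H^{-1/2}(\pO)$ so that the perturbation acts between the correct spaces, and---if one prefers to prove the parenthetical equivalence by an explicit computation rather than via the coincidence of the solvability criteria---the routine but slightly lengthy verification that $g^{*1}(\mathcal F^{N1})$ reduces to $\langle\tilde f,u^0\rangle_\Omega-\langle\psi_0,\gamma^+u^0\rangle_\pO$.
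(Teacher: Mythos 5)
Your proposal is correct and follows essentially the same route as the paper: the paper's proof consists precisely of checking $g^{*1}(\mathcal G^1)=|\pO|\neq 0$ and $g^0(\mathcal U^0)=1\neq 0$ and then invoking Theorem~\ref{L1}, with the Fredholm property, kernel and cokernel of $\mathfrak N^1$ supplied by Theorems~\ref{invertibilityN} and~\ref{T3.521HF}. Your additional direct verification of part (ii) merely unfolds what Theorem~\ref{L1}(ii) already provides, and you correctly read ``(D2)'' in the statement as a typo for the system (N1).
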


\subsubsection{Perturbation of BDIE system (N2)}
The perturbation operators chosen below for BDIE system (N2) are slightly different from those, used in  \cite{MikPert1999} for the purely boundary integral equations, in \cite[Section 3]{MikNakJEM} for a united localised BDIE system and in \cite[Section 2]{MikMoh2012} for a united non-localised BDIE system. 

Let us introduce the perturbed counterparts of the BDIE system  (N1),
\begin{align}\label{N2-p}
\hat{\mathfrak N}^{2}\mathcal U^N=\mathcal F^{N2},
\end{align}
where
$$
\hat{\mathfrak N}^{2}:={\mathfrak N}^{2}+\mathring{\mathfrak N}^{2}
\ \text{and}\
\mathring{\mathfrak N}^2\mathcal U^N:=
g^0(\mathcal U^N)\,{\mathcal G}^2=\frac{1}{|\pO|}\int_\pO \varphi(x)dS 
\left(\begin{array}{c}a^{-1}(y)\\ \gamma^+a^{-1}(y)\end{array}\right)  ,
$$
that is, 
\bes 
g^0(\mathcal U^N):=\frac{1}{|\pO|}\int_\pO \varphi(x)dS,
\quad
{\mathcal G}^2:=\left(\begin{array}{c}a^{-1}(y)u^0(y)\\ \gamma^+[a^{-1}u^0](y)\end{array}\right).
\ees
For the functional $g^{*2}$ given by \eqref{SSH10C2} in Theorem~\ref{T3.521HF2}, since the operator $\mathcal{V}_{\Delta}^{-1}:H^\ha(\pO)\to H^{-\ha}(\pO)$ is positive definite and $u^0(x)=1$, there exists a positive constant $C$ such that
\begin{multline}
g^{*2}({\mathcal G}^2)=
\left\langle -a\gamma^{+*}\left(\frac{1}{2}+\W'_\Delta\right)\mathcal{V}_{\Delta}^{-1}\gamma^+u^0,a^{-1}u^0\right\rangle_{\Omega}
+\left\langle -a\left(\frac{1}{2}-\W'_\Delta\right)\mathcal{V}_{\Delta}^{-1}\gamma^+u^0,\gamma^+(a^{-1}u^0) \right\rangle_{\partial\Omega}\\
=-\left\langle \left(\frac{1}{2}+\W'_\Delta\right)\mathcal{V}_{\Delta}^{-1}\gamma^+u^0
+ \left(\frac{1}{2}-\W'_\Delta\right)\mathcal{V}_{\Delta}^{-1}\gamma^+u^0,\gamma^+u^0 \right\rangle_{\partial\Omega}
=-\left\langle\mathcal{V}_{\Delta}^{-1}\gamma^+u^0,\gamma^+u^0\right\rangle_{\partial\Omega}\\
\le -C\|\gamma^+u^0\|_{H^\ha(\pO)}^2
\le -C\|\gamma^+u^0\|_{L_2(\pO)}^2=-C|\pO|^2<0.\label{g0U0A2}
\end{multline}
Due to \eqref{g0U0A2} and  $g^0(\mathcal U^0)=1$, Theorem~\ref{L1} in Appendix implies the following assertion.
\begin{theorem}
(i) The operator 
$
\hat{\mathfrak N}^{2}: H^1(\Omega )\times
H^{\frac{1}{2}}(\partial\Omega) \to H^1(\Omega )\times
H^{\frac{1}{2}}(\partial\Omega)
$
is continuous and continuously invertable.

(ii)
If condition $g^{*2}(\mathcal F^{N2})=0$ (or condition \eqref{3.suf} for $\F^{N2}$ in form \eqref{FN1}) is satisfied, then the unique solution of  perturbed BDIDE system \eqref{N2-p} gives a solution of original BDIE system (N2) such that
\bes 
g^0(\mathcal U^N)=\frac{1}{|\pO|}\int_\pO \gamma^+ u dS=\frac{1}{|\pO|}\int_\pO \varphi dS=0.
\ees
\end{theorem}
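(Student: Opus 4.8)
The plan is to obtain both items as a direct application of the abstract finite-dimensional perturbation result, Theorem~\ref{L1} in the Appendix, to the operator $\mathfrak N^2$ equipped with the perturbing pair $(g^0,\mathcal G^2)$; all the work lies in checking its hypotheses. First I would settle continuity: $\hat{\mathfrak N}^2=\mathfrak N^2+\mathring{\mathfrak N}^2$, where $\mathfrak N^2:H^1(\Omega)\times H^{\frac12}(\partial\Omega)\to H^1(\Omega)\times H^{\frac12}(\partial\Omega)$ is continuous by Theorem~\ref{invertibilityN}, and $\mathring{\mathfrak N}^2:\mathcal U^N\mapsto g^0(\mathcal U^N)\,\mathcal G^2$ is a bounded rank-one operator: $g^0$ is a continuous functional on $H^{\frac12}(\partial\Omega)$ (indeed already on $L_2(\partial\Omega)$) and $\mathcal G^2=(a^{-1}u^0,\gamma^+[a^{-1}u^0])^\top$ is a fixed element of the target space, the inclusion $a^{-1}u^0\in H^1(\Omega)$ being guaranteed by $a\in C^\infty(\overline\Omega)$, $a>0$.

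Next I would assemble the structural facts needed to invoke Theorem~\ref{L1}: by Theorem~\ref{invertibilityN}, $\mathfrak N^2$ is Fredholm of index zero with $\ker\mathfrak N^2=\mathrm{span}\{\mathcal U^0\}$, $\mathcal U^0=(1,1)^\top$; by Theorem~\ref{T3.521HF2}, its cokernel is one-dimensional, spanned by $g^{*2}$. The two non-degeneracy conditions then read $g^0(\mathcal U^0)=\frac{1}{|\partial\Omega|}\int_{\partial\Omega}1\,dS=1\ne 0$ and $g^{*2}(\mathcal G^2)\le -C|\partial\Omega|^2<0$, the second being precisely the computation \eqref{g0U0A2}. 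With these, Theorem~\ref{L1}(i) yields that $\hat{\mathfrak N}^2$ is continuous and continuously invertible, which is item (i).

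For item (ii) I would again quote Theorem~\ref{L1}: if $g^{*2}(\mathcal F^{N2})=0$, then the unique solution $\mathcal U^N$ of $\hat{\mathfrak N}^2\mathcal U^N=\mathcal F^{N2}$ satisfies $g^0(\mathcal U^N)=0$, i.e. $\frac{1}{|\partial\Omega|}\int_{\partial\Omega}\varphi\,dS=0$; hence $\mathring{\mathfrak N}^2\mathcal U^N=g^0(\mathcal U^N)\mathcal G^2=0$ and therefore $\mathfrak N^2\mathcal U^N=\hat{\mathfrak N}^2\mathcal U^N=\mathcal F^{N2}$, so $\mathcal U^N$ solves the original system (N2). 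Finally, by the equivalence Theorem~\ref{equivalenceN}(ii) this solution has $\gamma^+u=\varphi$, which rewrites $\int_{\partial\Omega}\varphi\,dS=0$ as $\int_{\partial\Omega}\gamma^+u\,dS=0$. The equivalence of $g^{*2}(\mathcal F^{N2})=0$ with the classical condition \eqref{3.suf} (for the data $\tilde f$, $\psi_0$ extracted from $\mathcal F^{N2}$ via \eqref{FN2}) is immediate, since $g^{*2}$ spans the cokernel of $\mathfrak N^2$ while \eqref{3.suf} is, by Theorem~\ref{equivalenceN}(iii), the necessary and sufficient solvability condition for the unperturbed system.

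The only genuinely non-routine point is the strict sign $g^{*2}(\mathcal G^2)<0$, and this is where the specific choice of $\mathcal G^2$ (with the weight $a^{-1}$) is designed to pay off: it collapses the two boundary contributions in $g^{*2}$ into $-\langle\mathcal V_\Delta^{-1}\gamma^+u^0,\gamma^+u^0\rangle_{\partial\Omega}$, which the positive definiteness (coercivity) of $\mathcal V_\Delta^{-1}:H^{\frac12}(\partial\Omega)\to H^{-\frac12}(\partial\Omega)$ bounds above by $-C\|\gamma^+u^0\|^2_{L_2(\partial\Omega)}=-C|\partial\Omega|^2$. Everything else is routine bookkeeping layered on top of Theorems~\ref{invertibilityN}, \ref{T3.521HF2} and the abstract Theorem~\ref{L1}.
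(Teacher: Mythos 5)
Your proof is correct and follows essentially the same route as the paper: the paper's own proof is just the one-line invocation ``Due to \eqref{g0U0A2} and $g^0(\mathcal U^0)=1$, Theorem~\ref{L1} in Appendix implies the following assertion,'' resting on exactly the ingredients you assemble (continuity and index-zero Fredholmness of $\mathfrak N^2$ from Theorem~\ref{invertibilityN}, the kernel $\mathrm{span}\{(1,1)^\top\}$, the cokernel description of Theorem~\ref{T3.521HF2}, and the non-degeneracy computations $g^0(\mathcal U^0)=1$ and \eqref{g0U0A2}). Your write-up merely makes explicit the bookkeeping the paper leaves implicit, so no further comparison is needed.
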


\appendix \section*{APPENDIX}

\section{Function from $H^1(\Omega)$ with no classical or canonical conormal derivative}\label{Example}
For  functions from ${H}^{1}(\Om)$ the co--normal derivative $a\pa_n u$ on $\partial\Omega$ may not exist  in the classical (trace) or even canonical sense. In this section we consider an example of such function.

Let  $\Omega$ be a ball $B_{r_0}\subset\R^3$ of  some radius $r_0>0$ with the centre  at $x=0$. Let   $a=1$ and hence $A$ be the Laplace operator $\Delta$. Let us consider the function  
$$u(x)=(r_0^2-|x|^2)^{3/4},\quad x\in \Omega.$$

Evidently, this function is infinitely smooth in $\Omega$, vanishes on the boundary and its gradient 
\begin{align}\label{nu}
\nabla u(x)=-\frac{3}{2}x(r_0^2-|x|^2)^{-1/4}
\end{align}
belongs to $L_p(\Omega)$, $0< p<4$ and hence to $L_2(\Omega)$. This implies that $u$ belongs to the Sobolev space $W_p^1(\Omega$, $0< p<4$ and   thus $u\in H^1(\Omega)$.
For the classical conormal derivative we have, 
$$T_c^+u(x)=n(x)\cdot\lim_{|x|\to r_0} \nabla u(x)= -\infty,$$ 
which evidently means that it does not belong to any Sobolev space on the boundary.

 On the other hand, $u$ solves the Dirichlet problem
 \begin{align}
 \Delta u&=f\in H^{-1}(\Omega) \text{ in } \Omega, \\
 \gamma^+u&=0 \text{ on } \partial\Omega
 \end{align}
 with 
 $$
 f(x)=-\frac{9}{2}(r_0^2-|x|^2)^{-1/4}
 +\frac{3}{4}|x|^2(r_0^2-|x|^2)^{-5/4}\in H^{-1}(\Omega).
 $$

To define the canonical conormal derivative of $u$ according to Definitions \ref{Hst} and \ref{Tcandef}, the function $f$ should at least belong to $H^{-\ha}(\Omega)$. Let us prove that this is not the case. 
Indeed, if 
$f\in H^{-\ha}(\Omega)$, 
then the dual form 
$\langle f, \tilde g\rangle_\Omega$ 
should be bounded for any test function
$\tilde g\in \widetilde H^{\ha}(\Omega)$. 
Let us take  
$$
\tilde g(x)=\begin{cases}
(r_0^2-|x|^2)^{1/4}, & x\in\Omega\\
 0,& x\notin\Omega
 \end{cases}.
 $$ 
 Estimating the Sobolev-Slobodetski norm of this function one can prove that  
$\tilde g$ belongs to the space $\widetilde H^{s}(\Omega)$ for any $s<2/3$ and particularly to $\widetilde H^{\frac{1}{2}}(\Omega)$.
However
$$
f(x)\tilde g(x)=-\frac{9}{2}
 +\frac{3}{4}|x|^2(r_0^2-|x|^2)^{-1}\text{ in }\Omega
$$
and hence 
$
\langle f, \tilde g\rangle_\Omega=\int_\Omega f(x)\tilde g(x) dx
$
is not bounded. This implies that $f\notin H^{-\ha}(\Omega)$ and the canonical conormal derivative is also not defined.

To calculate the generalised co-normal derivative, one has to  extend the function $f\in H^{-1}(\Omega)$ to the function  
$\tilde f\in \widetilde H^{-1}(\Omega)$. As remarked in  \cite[Lemma 2.15]{MikJMAA2011} this is always possible due to the Hahn-Banach theorem, and an explicit extension is suggested in \cite[Theorem 2.16]{MikJMAA2011}, although the extension is not unique. Particularly,  one can assign $\tilde f= \check{A}u$, i.e.,  by  \eqref{Ltil},
\begin{align}\label{Ltil-e}
  \langle \tilde f,v \rangle_\Omega 
    =-\int_{\Omega} \nabla u(x)\cdot\nabla v(x)dx
        =-\int_{\Omega} \nabla u(x)\cdot\nabla v(x)dx
        =\langle \nabla\cdot\mathring E \nabla u,v\rangle_\Omega, \quad \forall v\in
    {H}^1(\Omega),
\end{align}
where $\nabla u$ is given by \eqref{nu}. 
Then \eqref{Tgend} implies that the generalised conormal derivative, $T^+(\tilde f,u)$, is well defined on $\partial\Omega$ and is zero. 
Different extensions of $f$ to $\tilde f$ will lead to different conormal derivatives, and moreover, any distribution from $H^{-\ha}(\pO)$ can be nominated as conormal derivative by appropriate choice of extension $\tilde f$, cf. \cite[Section 2.2, item 4]{Agranovich2003RMS}, \cite[Eq. (3.13)]{MikJMAA2011}, \cite[Eq. (5.10)]{MikJMAA2013}.

\section{Approximation of generalised conormal derivatives by classical ones}

\begin{theorem}\label{Tseq}
Let $u\in {H}^{1}(\Omega)$, $Au=r_\Omega\tilde f$ in $\Omega$ for some $\tilde f\in
\s{H}^{-1}(\Omega)$, and  $\{\tilde f_k\}\in\mathcal D(\Omega)$ be a sequence such that 
$\|\tilde f-\tilde f_k\|_{\widetilde H^{-1}(\Omega)}\to 0$ as $k\to\infty$. 
Then there exists a sequence  $\{u_k\}\in\mathcal D(\overline\Omega)$ such that 
$Au_k=r_\Omega\tilde f_k$ and $\|u-u_k\|_{H^{1}(\Omega)}\to 0$  as $k\to \infty$. Moreover, $\|T^+(u_k)-T^+(\tilde{f},u)\|_{H^{-\ha}(\partial\Omega)}\to 0$   as $k\to \infty$.
\end{theorem}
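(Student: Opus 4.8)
The plan is to construct each $u_k$ as the solution of an auxiliary Dirichlet problem, exploiting the fact that the PDE right-hand side $\tilde f_k$ is already prescribed so that only the Dirichlet trace is left free, and then to deduce the convergence of the conormal derivatives from the linearity identity \eqref{GCDL} together with estimate \eqref{estimate}. First I would pick $\varphi_k\in C^\infty(\pO)$ with $\|\varphi_k-\gamma^+u\|_{H^{\ha}(\pO)}\to 0$ as $k\to\infty$ (possible since $C^\infty(\pO)$ is dense in $H^{\ha}(\pO)$), and let $u_k\in H^1(\Omega)$ be the unique solution, given by Theorem~\ref{Rem1}, of the Dirichlet problem $Au_k=r_\Omega\tilde f_k$ in $\Omega$, $\gamma^+u_k=\varphi_k$ on $\pO$. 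Since $\supp\tilde f_k$ is a compact subset of $\Omega$, the datum $r_\Omega\tilde f_k$ lies in $C^\infty(\overline\Omega)$, and together with $\varphi_k\in C^\infty(\pO)$, $a\in C^\infty(\overline\Omega)$ and $\pO$ being $C^\infty$ this forces $u_k\in C^\infty(\overline\Omega)=\mathcal D(\overline\Omega)$ by standard interior and boundary elliptic regularity (see e.g.\ \cite{LiMa1,McLean2000}), while $Au_k=r_\Omega\tilde f_k$ holds by construction. Applying the continuity of $(\mathcal A^D)^{-1}$ from Theorem~\ref{Rem1} to $u-u_k$, which solves $A(u-u_k)=r_\Omega(\tilde f-\tilde f_k)$ in $\Omega$, $\gamma^+(u-u_k)=\gamma^+u-\varphi_k$ on $\pO$, and using the continuity of the restriction operator $r_\Omega\colon\widetilde H^{-1}(\Omega)\to H^{-1}(\Omega)$, I would obtain
\[
\|u-u_k\|_{H^1(\Omega)}\le C\bigl(\|\tilde f-\tilde f_k\|_{\widetilde H^{-1}(\Omega)}+\|\gamma^+u-\varphi_k\|_{H^{\ha}(\pO)}\bigr)\longrightarrow 0 .
\]

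For the final assertion, I would note that $u_k\in C^\infty(\overline\Omega)$ implies $Au_k\in L_2(\Omega)$ whose canonical (zero) extension is $\tilde f_k$ itself, because $\tilde f_k\in\widetilde H^0(\Omega)$ is the unique extension of $r_\Omega\tilde f_k$ in $\widetilde H^t(\Omega)$ for $t\ge-\ha$; hence $u_k\in H^{1,0}(\Omega;A)\subset H^{1,-\ha}(\Omega;A)$ and, comparing Definitions~\ref{GCDd} and \ref{Dccd}, the canonical conormal derivative satisfies $T^+(u_k)=T^+(\tilde f_k,u_k)$. Since $A(u_k-u)=r_\Omega(\tilde f_k-\tilde f)$ in $\Omega$, the linearity \eqref{GCDL} of the generalised conormal derivative in the couple gives
\[
T^+(u_k)-T^+(\tilde f,u)=T^+(\tilde f_k,u_k)-T^+(\tilde f,u)=T^+\bigl(\tilde f_k-\tilde f,\ u_k-u\bigr),
\]
and estimate \eqref{estimate} then yields
\[
\|T^+(u_k)-T^+(\tilde f,u)\|_{H^{-\ha}(\pO)}\le C_1\|u_k-u\|_{H^1(\Omega)}+C_2\|\tilde f_k-\tilde f\|_{\widetilde H^{-1}(\Omega)}\longrightarrow 0 ,
\]
which completes the proof.

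The genuinely routine part of the argument thus reduces to the two a priori estimates already available in the paper, and the step I expect to need the most care — the main obstacle — is securing simultaneously all the properties of $u_k$: smoothness up to $\pO$ (here one uses that $\supp\tilde f_k$ is compactly contained in $\Omega$, so that $r_\Omega\tilde f_k\in C^\infty(\overline\Omega)$, and invokes full elliptic regularity for the variable-coefficient Dirichlet problem), the exact identity $Au_k=r_\Omega\tilde f_k$, and $H^1$-closeness to $u$; together with the identification $T^+(u_k)=T^+(\tilde f_k,u_k)$, which relies on $\tilde f_k$ being the canonical extension of $Au_k|_\Omega$.
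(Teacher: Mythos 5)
Your proposal is correct and follows essentially the same route as the paper: both construct $u_k$ as the solution of the auxiliary Dirichlet problem with data $(\tilde f_k,\varphi_k)$, invoke elliptic regularity to get $u_k\in\mathcal D(\overline\Omega)$, use the continuity of $(\mathcal A^D)^{-1}$ for the $H^1$ convergence, and identify $T^+u_k$ with $T^+(\tilde f_k,u_k)$ via the canonical extension. The only cosmetic difference is that the paper obtains the final estimate by directly subtracting the defining dualities \eqref{Tcandef} and \eqref{Tgend}, whereas you package the same computation through the linearity relation \eqref{GCDL} and the already-stated estimate \eqref{estimate}.
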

\begin{proof}
Let us consider the Dirichlet problem
\begin{eqnarray}
\label{2.6t} && A\,u_k= \tilde f_k  \;\;\; \mbox{\rm in}\;\;\;\; \Omega,
\\ 
 \label{2.7t} &&  \gamma^+u_k=\varphi_k  \;\;\; \mbox{\rm on}\;\;\;\; \pO,
\end{eqnarray}
where $ \{\varphi_k\} \in \mathcal D(\pO)$ is a sequence converging to $\gamma^+ u$ in $H^\ha(\pO)$. 
By Theorem~\ref{Rem1},  the unique solution of problem \eqref{2.6t}-\eqref{2.7t} in $H^{1}(\Omega)$ is  
$u_k=(\mathcal A^D)^{-1}(\tilde f_k,\varphi_k)^\top$, where 
$(\mathcal A^D)^{-1}:H^{-1}(\Omega)\times H^{\ha}(\pO)\to H^{1}(\Omega)$ is a continuous operator. 
Hence the functions $u_k$ converge to $u$ in $H^{1}(\Omega)$ as $k\to\infty$. 
Due to infinite smoothness of the data $(\tilde f_k,\varphi_k)$ and the boundary $\pO$, the solution $u_k$ belongs to $\mathcal D(\overline\Omega)$ implying that its classical conormal derivative $T^+u_k$ is well defined. 
Since $\tilde A u_k=\tilde f_k\in\mathcal D(\Omega)\in L_2(\Omega)$, the canonical conormal derivative is also  well defined and equals to the classical one. 
Then subtracting \eqref{Tcandef} for $u_k$ from \eqref{Tgend}, we obtain,
$$
\left\langle
 T^+(\tilde f,u)-T^+u_k\,,\, w\right\rangle _{\pO}=
 \langle \tilde f-\tilde f_k,\gamma^{-1}w \rangle_\Omega + \E(u-u_k,\gamma^{-1}w)\quad 
 \forall\ w\in H^{1/2} (\partial\Omega).
$$
Then 
\be \label{T-Tk}
\|T^+(\tilde f,u)-T^+u_k\|_{H^{-\ha}(\partial\Omega)}\le 
 C\left(\|\tilde f-\tilde f_k\|_{\widetilde H^{-1}(\Omega)} + \|\nabla (u-u_k)\|_{L_2(\Omega)}\right)
\ee
for some positive $C$.
Since the right hand side  of \eqref{T-Tk} tends to zero as $k\to\infty$, so does also the left hand side.
\end{proof}

\section{Properties of the surface and volume potentials}\label{SP}

The mapping and jump properties of the potentials of type \eqref{4.9bP}-\eqref{4.9P}, 
\eqref{3.6}-\eqref{3.7} and the corresponding boundary integral and
pseudodifferential operators in the H\"{o}lder ($C^{k+\alpha}$),
Bessel potential ($H^s_p$) and Besov ($B^s_{p,q}$) spaces  are
well studied nowadays for the constant coefficient, $a=const$,
(see, e.g., a list of references in \cite{CMN-1, Hsiao-Wendland2008}). Employing relations \eqref{VWab1}-\eqref{VWab4}, some of the
properties were extended in \cite{CMN-1, CMN-2} to the case of
variable positive coefficient $a\in C^\infty(\R)$, and several of those results are provided here for convenience
(without proofs).

\begin{theorem}\label{T3.1P}
Let $\Omega$ be a bounded open three--dimensional region of $\R^3$
with a simply connected, closed, infinitely smooth boundary. The
following operators are continuous
%
\begin{eqnarray} {\P}  &:& \widetilde{H}^{s}(\Omega) \to
H^{s+2}( {\Omega}), \quad s\in \R,\label{T3.1P1}\\
 &:& {H}^{s}(\Omega) \to H^{s+2}( {\Omega}), \quad
s>-\ha; \label{T3.1P2}\\
{\cal R},{\cal R}_*  &:& \widetilde{H}^{s}(\Omega) \to
H^{s+1}( {\Omega}), \quad s\in \R, \label{T3.1P3}\\
&:& {H}^{s}(\Omega) \to H^{s+1}( {\Omega}), \quad
s>-\ha;\label{T3.1P4}\\
 \gamma^+{\P}  &:& \widetilde{H}^{s}(\Omega) \to
H^{s+\frac{3}{2}}( {\pO}), \quad s>-\frac{3}{2},\label{T3.1P1+}\\
 &:& {H}^{s}(\Omega) \to H^{s+\frac{3}{2}}( {\pO}), \quad
s>-\ha; \label{T3.1P2+}\\
\gamma^+{\cal R}  &:& \widetilde{H}^{s}(\Omega) \to
H^{s+\ha}( {\pO}), \quad s>-\ha,\label{T3.1P3+}\\
&:& {H}^{s}(\Omega) \to H^{s+\ha}( {\pO}), \quad
s>-\ha;\label{T3.1P4+}\\
T^+{\P}  &:& \widetilde{H}^{s}(\Omega) \to
H^{s+\frac{1}{2}}( {\pO}), \quad s>-\frac{1}{2},\label{T3.1P1T+}\\
 &:& {H}^{s}(\Omega) \to H^{s+\frac{1}{2}}( {\pO}), \quad
s>-\ha; \label{T3.1P2T+}\\
T^+{\cal R}  &:& \widetilde{H}^{s}(\Omega) \to
H^{s-\ha}( {\pO}), \quad s>\ha,\label{T3.1P3T+}\\
&:& {H}^{s}(\Omega) \to H^{s-\ha}( {\pO}), \quad
s>\ha.\label{T3.1P4T+}
\end{eqnarray}
\end{theorem}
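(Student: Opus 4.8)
The theorem collects standard mapping, trace and conormal-derivative properties of the volume potentials ${\cal P},{\cal R},{\cal R}_*$; the plan is to derive them from the well-known Laplace ($a\equiv1$) case via the algebraic identities \eqref{3.d1}, \eqref{R^0}, which write ${\cal P}g=\frac1a{\cal P}_\Delta g$, ${\cal R}g=-\frac1a\nabla\cdot{\cal P}_\Delta(g\,\nabla a)$, ${\cal R}_*g=-\sum_j\partial_j[(\partial_j a/a)\,{\cal P}_\Delta g]$. The elementary facts I would invoke repeatedly are: (i) multiplication by a $C^\infty(\overline\Omega)$ function is continuous on $H^s(\Omega)$ and on $H^s(\pO)$ for every $s\in\R$; (ii) $\nabla\cdot$ and each $\partial_j$ are continuous $H^{s+1}(\Omega)\to H^s(\Omega)$; (iii) $\gamma^+:H^r(\Omega)\to H^{r-\ha}(\pO)$ is continuous for $r>\ha$, and for $r>\tha$ the classical conormal derivative $T^+u=a\,\gamma^+(\partial u/\partial n)=a\,n\cdot\gamma^+\nabla u\in H^{r-\tha}(\pO)$ is well defined and coincides with the canonical conormal derivative of Definition~\ref{Dccd} by \cite[Corollary 3.14, Theorem 3.16]{MikJMAA2011}; (iv) the continuous extension $\widetilde E:H^s(\Omega)\to\widetilde H^s(\Omega)$ for $-\ha<s<\ha$.

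For $a\equiv1$: the Newtonian potential ${\cal P}_\Delta=r_\Omega{\mathbf P}_\Delta$ — where ${\mathbf P}_\Delta$ is convolution with the Laplace fundamental solution, of order $-2$ — maps $\widetilde H^s(\Omega)\subset H^s(\R^3)$ continuously into $H^{s+2}_{loc}(\R^3)$, hence into $H^{s+2}(\Omega)$ for all $s$, which is line \eqref{T3.1P1}. Line \eqref{T3.1P2} for $-\ha<s<\ha$ follows by precomposing with $\widetilde E$; for $s\ge\ha$, where zero-extension is no longer continuous into $H^s(\R^3)$, I would instead write ${\cal P}_\Delta g=h+r_\Omega V_\Delta(\gamma^+\partial_n h)$ with $h$ solving $\Delta h=g$ in $\Omega$, $\gamma^+h=0$, and combine global elliptic regularity ($h\in H^{s+2}(\Omega)$) with the classical mapping $V_\Delta:H^r(\pO)\to H^{r+\tha}(\Omega)$. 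Lines \eqref{T3.1P3}, \eqref{T3.1P4} for ${\cal R},{\cal R}_*$ then follow from those for ${\cal P}$ since, by \eqref{3.d1}, \eqref{R^0}, ${\cal R}$ and ${\cal R}_*$ are $-\nabla\cdot$ resp.\ $-\partial_j$ of a ${\cal P}_\Delta$-potential multiplied by a $C^\infty(\overline\Omega)$ factor, costing exactly one derivative by (i), (ii). Lines \eqref{T3.1P1+}--\eqref{T3.1P4+} come from composing \eqref{T3.1P1}--\eqref{T3.1P4} with $\gamma^+$ via (iii), the index constraints being those that keep $r=s+2$ resp.\ $r=s+1$ strictly above $\ha$; lines \eqref{T3.1P1T+}--\eqref{T3.1P4T+} likewise: $s>-\ha$ gives ${\cal P}g\in H^{s+2}(\Omega)$ with $s+2>\tha$, whence $T^+{\cal P}g=a\,n\cdot\gamma^+\nabla{\cal P}g\in H^{s+1/2}(\pO)$ (well defined also as canonical conormal derivative, since ${\cal P}g\in H^{1,-\ha}(\Omega;A)$, its $A$-image extending to $\widetilde H^{-\ha}(\Omega)$), while $s>\ha$ gives ${\cal R}g\in H^{s+1}(\Omega)$ with $s+1>\tha$, whence $T^+{\cal R}g\in H^{s-1/2}(\pO)$.

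Finally I would pass to variable $a$: by the identities above every operator in the theorem differs from its Laplace counterpart only by a fixed $C^\infty(\overline\Omega)$ multiplier and at most one differentiation, so continuity is preserved by (i), (ii); for the conormal-derivative rows the coefficient $a$ is already absorbed in $T^+u=a\,\gamma^+(\partial u/\partial n)$. The only genuinely delicate point is the $H^s(\Omega)$-domain case of the volume potential for $s\ge\ha$ — the replacement of zero-extension by the Dirichlet-solve-plus-single-layer decomposition, which is global elliptic regularity for $\Delta$ on the smooth domain $\Omega$ in disguise; the remainder is bookkeeping of Sobolev indices. Since all these ingredients are contained in \cite{CMN-1, CMN-2} for variable $a$ and in \cite{McLean2000, Costabel1988, DaLi4, Hsiao-Wendland2008} for $a\equiv1$, I would present the argument in this condensed form and refer to those works for the classical facts.
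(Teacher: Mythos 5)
Your argument is sound and follows precisely the route the paper indicates: Theorem~\ref{T3.1P} is stated there \emph{without proof}, with the remark that these properties follow from the constant-coefficient case by employing the multiplicative relations \eqref{3.d1}--\eqref{VWab4} and citing \cite{CMN-1, CMN-2}, and your reduction to ${\P}_\Delta$ plus $C^\infty(\overline\Omega)$ multipliers, single differentiations, traces, and the Dirichlet-solve-plus-single-layer decomposition for the $H^s(\Omega)$-domain case with $s\ge\ha$ is exactly that argument carried out, with the Sobolev-index bookkeeping matching every line of the theorem. The only cosmetic slip is the sign in the decomposition, which with the paper's conventions reads ${\P}_\Delta g=h-r_\Omega V_\Delta(\partial_n h)$; this has no bearing on the continuity claims.
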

\begin{corollary}
\label{T3.1P0s}  The following operators are continuous,
 \begin{eqnarray}
  {\cal P}&:& \s{H}^{s}(\Omega)\to
H^{s+2,-\ha}(\Omega;L),\quad s\ge
-\ha,\label{T3.1P1ha}\\
 &:&
{H}^{s}(\Omega)\to H^{s+2,-\ha}(\Omega;L),\quad
s>-\ha;\label{T3.1P2ha}
\\
{\cal R}&:& H^{s}(\Omega) \to H^{s+1,-\ha}(\Omega;L), \quad s>\ha.
\label{T3.1P3ha}
\end{eqnarray}
\end{corollary}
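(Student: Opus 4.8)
The plan is to deduce the three continuity statements from the mapping properties already collected in Theorem~\ref{T3.1P}, combined with two algebraic identities valid in $\Omega$: $A\mathcal P g=g+\mathcal R_* g$ (this is \eqref{LPf}, coming from $\Delta\mathbf P_\Delta=I$ and the product rule, cf.\ \eqref{3.d1}, \eqref{R^0}) and $\mathcal R g=-\mathcal P(\nabla\cdot(g\nabla a))$ (obtained by commuting $\nabla$ with the convolution $\mathbf P_\Delta$ in \eqref{3.d1}). By Definition~\ref{Hst}, to obtain $\mathcal P g\in H^{s+2,-\ha}(\Omega;A)$ (resp.\ $\mathcal R g\in H^{s+1,-\ha}(\Omega;A)$) continuously it is enough to know that the potential lies in $H^{s+2}(\Omega)$ (resp.\ $H^{s+1}(\Omega)$) continuously and that $A$ applied to it is the restriction to $\Omega$ of an element of $\widetilde H^{-\ha}(\Omega)$ whose $\widetilde H^{-\ha}$-norm is bounded by the data; that element is automatically unique, since a distribution in $H^{-\ha}(\R^3)$ supported in $\partial\Omega$ vanishes (see the remark after Definition~\ref{Hst}), so the norm of $H^{\,\cdot\,,-\ha}(\Omega;A)$ is well defined.

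For \eqref{T3.1P1ha}, take $g\in\widetilde H^s(\Omega)$ with $s\ge-\ha$. Then $\mathcal P g\in H^{s+2}(\Omega)$ continuously by \eqref{T3.1P1}, and $A\mathcal P g=g+\mathcal R_* g$ in $\Omega$. Since $s\ge-\ha$ we have $g\in\widetilde H^{-\ha}(\Omega)$; moreover $\mathcal R_* g\in H^{s+1}(\Omega)$ by \eqref{T3.1P3}, and $s+1\ge\ha$ gives $\mathcal R_* g\in L_2(\Omega)$, hence $\mathring E\,\mathcal R_* g\in\widetilde H^{-\ha}(\Omega)$ with $\|\mathring E\,\mathcal R_* g\|_{\widetilde H^{-\ha}(\Omega)}\le C\|\mathcal R_* g\|_{H^{s+1}(\Omega)}\le C\|g\|_{\widetilde H^s(\Omega)}$. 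Thus $g+\mathring E\,\mathcal R_* g\in\widetilde H^{-\ha}(\Omega)$ is the unique extension of $A\mathcal P g|_\Omega$, and adding the two bounds yields $\|\mathcal P g\|_{H^{s+2,-\ha}(\Omega;A)}\le C\|g\|_{\widetilde H^s(\Omega)}$. Statement \eqref{T3.1P2ha} follows in the same way for $g\in H^s(\Omega)$, $s>-\ha$, using \eqref{T3.1P2}, \eqref{T3.1P4} and replacing $g$ on the right-hand side by a continuous extension $\widehat g\in\widetilde H^{-\ha}(\Omega)$ of $g$ --- namely $\widehat g=\widetilde E g\in\widetilde H^s(\Omega)$ for $-\ha<s<\ha$ (the extension used to define $\mathcal P$ on $H^s(\Omega)$), and $\widehat g=\mathring E g\in\widetilde H^0(\Omega)$ for $s\ge\ha$.

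For \eqref{T3.1P3ha}, let $g\in H^s(\Omega)$ with $s>\ha$. Because $a\in C^\infty(\overline\Omega)$, the map $g\mapsto\nabla\cdot(g\nabla a)$ is continuous from $H^s(\Omega)$ into $H^{s-1}(\Omega)$ and $s-1>-\ha$; combining the identity $\mathcal R g=-\mathcal P(\nabla\cdot(g\nabla a))$ (checked first on $\widetilde H^s(\Omega)$, then on $H^s(\Omega)$ by matching the extensions of $g\nabla a$ and of $\nabla\cdot(g\nabla a)$) with \eqref{T3.1P2ha} applied with $s-1$ in place of $s$ gives $\mathcal R g\in H^{s+1,-\ha}(\Omega;A)$ and $\|\mathcal R g\|_{H^{s+1,-\ha}(\Omega;A)}\le C\|g\|_{H^s(\Omega)}$. (Alternatively, from $\Delta(a\mathcal R g)=-\nabla\cdot(g\nabla a)\in H^{s-1}(\Omega)$ in $\Omega$ one reads off an $\widetilde H^{-\ha}(\Omega)$-extension of $A\mathcal R g|_\Omega$; the passages from $a\mathcal R g$ to $\mathcal R g$ and from $\Delta$ to $A$ only introduce multiplications by $a,1/a,\nabla a\in C^\infty(\overline\Omega)$, which preserve $H^{s+1}(\Omega)$ and, as $s+1>\ha$, also the property of being a restriction of a $\widetilde H^{-\ha}(\Omega)$-distribution.)

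The only point needing genuine attention --- rather than a true obstacle --- is ensuring that the extension of $A\mathcal P g|_\Omega$, resp.\ $A\mathcal R g|_\Omega$, lies in $\widetilde H^{-\ha}(\Omega)$ and not merely in $\widetilde H^{-1}(\Omega)$; this is exactly where the hypotheses $s\ge-\ha$, resp.\ $s>\ha$, enter, as they make $\mathcal R_* g$, resp.\ $\nabla\cdot(g\nabla a)$, smooth enough ($H^{\ge\ha}$, resp.\ $H^{>-\ha}$) for its zero-extension to reach $\widetilde H^{-\ha}(\Omega)$. The remaining care is purely bookkeeping: keeping the definitions of $\mathcal P$, $\mathcal R$ and $\mathcal R_*$ on $H^s(\Omega)$ consistent across the ranges $|s|<\ha$ (where $\widetilde E$ is used) and $s\ge\ha$ (where $\mathring E$ is used).
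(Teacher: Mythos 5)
Your argument is correct and follows essentially the same route as the paper: both rely on the mapping properties of Theorem~\ref{T3.1P} together with an explicit formula for the second-order operator applied to the potential, splitting off the principal term $g$ from smoother remainders whose zero-extensions land in $\s{H}^{0}(\Omega)\subset\s{H}^{-\ha}(\Omega)$. The only cosmetic difference is that the paper treats $s>-\ha$ by a one-line appeal to Theorem~\ref{T3.1P} and details only the endpoint $s=-\ha$ via the Leibniz expansion of $\Delta\big[\frac{1}{a}{\cal P}_\Delta g\big]$, whereas you use the algebraically equivalent identities $A{\cal P}g=g+{\cal R}_*g$ and ${\cal R}g=-{\cal P}(\nabla\cdot(g\nabla a))$ uniformly in $s$.
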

\begin{proof}  Continuity of operators \eqref{T3.1P1}, \eqref{T3.1P2}
and \eqref{T3.1P4} imply continuity of operator \eqref{T3.1P1ha} for
$s>-\ha$ as well as \eqref{T3.1P2ha} and \eqref{T3.1P3ha}. 

\comment%
{
Let us prove \eqref{T3.1P1ham}. For $g\in
\s{H}^{-\ha}_\#(\Omega)\subset\s{H}^{s}(\Omega)$, $s< -\ha$, we
have, ${\cal P}\,g\in H^{s+2}(\Omega)$ due to \eqref{T3.1P1}, and
\begin{eqnarray}
\label{3.d1B} \Delta {\cal P}\,g\, =\Delta\left[\frac{1}{a }\;{\cal
P}_\Delta\,g\, \right]=&&\nonumber\\
\frac{1}{a }g\, +2\sum_{j=1}^3\partial_j\left[\frac{1}{a
}\right]\;\partial_j\left[{\cal P}_\Delta\,g\, \right]+
\left[\Delta\frac{1}{a }\right]\;{\cal P}_\Delta\,g&&
\mbox{in}\quad \R^3,\label{DPgB}
\end{eqnarray}
where ${\cal P}_\Delta:={\cal P}|_{a=1}$, and we taken into
account that $\Delta{\cal P}_\Delta\,g=g$. 
The first term in \eqref{DPgB}
belongs to $\s{H}^{-\ha}_\#(\Omega)$, while, since $a\in
C^\infty(\bar{\Omega})$, $a>0$, the sum of the second
and the third term belongs to ${H}^{s+1}(\Omega)$ for any $s<-\ha$
and can be extended by zero to $\s{H}^0(\Omega)\subset
\s{H}^{-\ha}(\Omega)$, which completes the proof of continuity for
operator \eqref{T3.1P1ham} since $H_\#^{t,-\ha} (\Omega,\Gamma;L)=
H_\#^{t,-\ha} (\Omega,\Gamma;\Delta)$, $t\ge \ha$.
} 

Let us prove \eqref{T3.1P1ha} for $s=-\ha$. For $g\in
\s{H}^{-\ha}(\Omega)$, we
have, ${\cal P}\,g\in H^\tha(\Omega)$ due to \eqref{T3.1P1}, while
\begin{eqnarray}
\label{3.d1B} \Delta {\cal P}\,g\, =\Delta\left[\frac{1}{a }\;{\cal
P}_\Delta\,g\, \right]=&&\nonumber\\
\frac{1}{a }g\, +2\sum_{j=1}^3\partial_j\left[\frac{1}{a
}\right]\;\partial_j\left[{\cal P}_\Delta\,g\, \right]+
\left[\Delta\frac{1}{a }\right]\;{\cal P}_\Delta\,g&&
\mbox{in}\quad \R^3,\label{DPgB}
\end{eqnarray}
where ${\cal P}_\Delta:={\cal P}|_{a=1}$, and we taken into
account that $\Delta{\cal P}_\Delta\,g=g$. 
The first term in \eqref{DPgB}
belongs to $\s{H}^{-\ha}(\Omega)$, while, since $a\in
C^\infty(\bar{\Omega})$, $a>0$, the sum of the second
and the third term belongs to ${H}^{\ha}(\Omega)$
and can be extended by zero to $\s{H}^0(\Omega)\subset
\s{H}^{-\ha}(\Omega)$, which completes the proof of continuity for
operator \eqref{T3.1P1ha} for $s=-\ha$.
\end{proof}

\begin{theorem}\label{T3.1s0}
The following operators are continuous,
 \begin{eqnarray}
V  &:& H^{s-\frac{3}{2}}(\pO) \to H^{s}( {\Omega}),\quad s\in \R,\label{VHs1}\\
   &:& H^{s-\frac{3}{2}}(\pO) \to H^{s,-\ha}( {\Omega;L})\label{VHs1Ga} ,\quad s> \ha;\\
W  &:&  H^{s-\ha}(\pO)\to H^{s}(\Omega),\quad s\in \R,\label{WHs1}\\
   &:&  H^{s-\ha}(\pO)\to  H^{s,-\ha}( {\Omega;L}) ,\quad s> \ha.\label{WHs1Ga}
\end{eqnarray}
\end{theorem}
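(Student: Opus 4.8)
The plan is to reduce Theorem~\ref{T3.1s0} to the classical constant-coefficient layer potentials via the representations \eqref{VWa}, $Vg=\frac1a V_\Delta g$ and $Wg=\frac1a W_\Delta(ag)$, and then to recover the extra information carried by the spaces $H^{s,-\ha}(\Omega;L)$ from the fact that $aVg$ and $aWg$ are harmonic in $\Omega$. First I would invoke the well-known mapping properties of the Laplace layer potentials, $V_\Delta\colon H^{s-\tha}(\pO)\to H^{s}(\Omega)$ and $W_\Delta\colon H^{s-\ha}(\pO)\to H^{s}(\Omega)$, continuous for every $s\in\R$ (see the references collected in \cite{CMN-1, Hsiao-Wendland2008, McLean2000}). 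Since $a\in C^\infty(\R^3)$ with $a>0$, multiplication by $a$ and by $1/a$ is an isomorphism of $H^{s}(\Omega)$ for every $s\in\R$ (for $s<0$ by duality with $\widetilde H^{-s}(\Omega)$), and multiplication by $\gamma^+a$ is an isomorphism of $H^{t}(\pO)$ for every $t\in\R$; composing with \eqref{VWa} then yields at once the continuity of \eqref{VHs1} and \eqref{WHs1}.

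For \eqref{VHs1Ga} and \eqref{WHs1Ga} — where $L$ may be taken to be $A$, equivalently $\Delta$, since $H^{s,-\ha}(\Omega;A)=H^{s,-\ha}(\Omega;\Delta)$ for $s>\ha$ by an argument analogous to the one following Definition~\ref{Hst} (using that $Au-a\Delta u=\nabla a\cdot\nabla u\in H^{s-1}(\Omega)$ with $s-1>-\ha$) — it remains, given the first step, to exhibit extensions of $A(Vg)|_\Omega$ and $A(Wg)|_\Omega$ lying in $\widetilde H^{-\ha}(\Omega)$ and depending continuously on $g$. Here I would use \eqref{DV,DW=0}, i.e.\ $\Delta(aVg)=0$ and $\Delta(aWg)=0$ in $\Omega$: for any $u$ with $au$ harmonic in $\Omega$ one has $a\nabla u=\nabla(au)-u\nabla a$, hence
\begin{equation*}
Au=\nabla\cdot(a\nabla u)=\Delta(au)-\nabla\cdot(u\nabla a)=-\nabla\cdot(u\nabla a)\quad\text{in }\Omega,
\end{equation*}
the identity holding in $\mathcal D^*(\Omega)$ by the product rule for $C^\infty$ multipliers. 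Applied to $u=Vg$ and $u=Wg$ this gives $A(Vg)=-\nabla\cdot(Vg\,\nabla a)$ and $A(Wg)=-\nabla\cdot(Wg\,\nabla a)$ in $\Omega$; the gain of one derivative over the naive bound $A(Vg)\in H^{s-2}(\Omega)$ is exactly what makes the argument go through.

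Next I would note that by the first step $Vg\in H^{s}(\Omega)$, so (componentwise) $Vg\,\nabla a\in H^{s}(\Omega)$ and therefore $h:=\nabla\cdot(Vg\,\nabla a)|_\Omega\in H^{s-1}(\Omega)$ with $s-1>-\ha$. Picking $t':=\min\{s-1,0\}\in(-\ha,\ha)$, the composition
\begin{equation*}
H^{s-1}(\Omega)\hookrightarrow H^{t'}(\Omega)\xrightarrow{\ \widetilde E\ }\widetilde H^{t'}(\Omega)\hookrightarrow\widetilde H^{-\ha}(\Omega),
\end{equation*}
with $\widetilde E$ the bounded extension operator of \cite[Theorem~2.16]{MikJMAA2011}, is bounded, and by $r_\Omega\widetilde E=\mathrm{id}$ the restriction to $\Omega$ of $-\widetilde Eh$ equals $-h=A(Vg)|_\Omega$. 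Thus $Vg\in H^{s,-\ha}(\Omega;L)$ with $\|Vg\|_{H^{s,-\ha}(\Omega;L)}\le C\|g\|_{H^{s-\tha}(\pO)}$, which is \eqref{VHs1Ga}; the identical computation with $Wg$ in place of $Vg$ and $\|g\|_{H^{s-\ha}(\pO)}$ in place of $\|g\|_{H^{s-\tha}(\pO)}$ gives \eqref{WHs1Ga}.

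The one genuinely delicate point — and the main obstacle — is the endpoint smoothness index $-\ha$ in \eqref{VHs1Ga}, \eqref{WHs1Ga}. For $s\ge\tha$ the zero-extension $\mathring Eh$ of $h=\nabla\cdot(Vg\,\nabla a)|_\Omega$ does not lie in $\widetilde H^{-\ha}(\Omega)$ at the natural regularity $s-1$ (it generally has a boundary jump), so one is forced to first downgrade the Sobolev index into the range $(-\ha,\ha)$ where the extension operator $\widetilde E$ is bounded, and only afterwards use the inclusion $\widetilde H^{t'}(\Omega)\subset\widetilde H^{-\ha}(\Omega)$; this is admissible precisely because the definition of $H^{s,-\ha}(\Omega;L)$ requires membership in $\widetilde H^{t}(\Omega)$ only for the fixed index $t=-\ha$, not for the natural one. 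The remaining points — reading off the interior identities $A(Vg)=-\nabla\cdot(Vg\,\nabla a)$, $A(Wg)=-\nabla\cdot(Wg\,\nabla a)$ from \eqref{DV,DW=0}, and checking that the $\tilde f_g$ constructed above is the (unique, since the index equals $-\ha$) extension entering the $H^{s,-\ha}(\Omega;L)$-norm — are routine.
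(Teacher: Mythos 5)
Your argument is correct and follows essentially the same route as the paper's (the proof sketched in the source and in \cite{CMN-1}): reduce to the Laplace layer potentials via \eqref{VWa} and the smooth-multiplier isomorphisms to get \eqref{VHs1}, \eqref{WHs1}, then exploit the harmonicity \eqref{DV,DW=0} to gain one derivative — the paper computes $\Delta(Vg)=\Delta(\tfrac1a)V_\Delta g+2\nabla(\tfrac1a)\cdot\nabla V_\Delta g$ while you equivalently compute $A(Vg)=-\nabla\cdot(Vg\,\nabla a)$ — and finally embed $H^{s-1}(\Omega)$ into $\widetilde H^{-\ha}(\Omega)$ for $s>\ha$ via the extension operator $\widetilde E$. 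The only quibble is your closing remark: for $s\ge\tha$ the zero extension $\mathring Eh$ of $h\in H^{s-1}(\Omega)\subset L_2(\Omega)$ already lies in $\widetilde H^{0}(\Omega)\subset\widetilde H^{-\ha}(\Omega)$, so the downgrade to $t'=\min\{s-1,0\}$ is a convenient uniform device rather than a forced one; this does not affect the validity of the proof.
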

\comment{
\begin{proof} 
\begin{eqnarray}\label{T3.1a}
V\Psi(y)=\frac{1}{a(y)}V_\Delta \Psi(y),&&
 V_\Delta \Psi(y):= \int\limits_{S}{P_\Delta}(x,y)\,\Psi(x)\,dx\\
\label{T3.1b} W\Phi(y)=\frac{1}{a(y)}W_\Delta[a\Phi](y),&&
 W_\Delta[a\Phi](y):=  \int\limits_{S}
\,\frac{\partial{P_\Delta}(x,y)}{\partial
n(x)}\,a(x){\Phi}(x)\,dx,\qquad\quad
\end{eqnarray}
where ${P_\Delta}(x,y):=-(4\pi)^{-1}\,|x-y|^{-1}$ is the
fundamental solution to the Laplace equation.

 This is well known that  the operators
\begin{eqnarray}\label{T3.1c}
V_\Delta   &:& H^{s-\frac{3}{2}}(\pO) \to H^{s}( {\Omega}) ,
\\ 
\label{T3.1d} W_\Delta  &:&  H^{s-\ha}(\pO)\to H^{s}(\Omega)
\end{eqnarray}
are continuous for any $s\in\R$ (see e.g. the above references).
Since $a(x)\not=0$ and $a\in C^\infty(\R)$, equalities
\eqref{T3.1a}, \eqref{T3.1b} imply  the similar properties,
\eqref{VHs1}, \eqref{WHs1}, for the operators $V$ and $W$.

On the other hand,
\begin{eqnarray*}
[\Delta V\Psi](y)=
\left[\Delta\frac{1}{a(y)}\right]V_\Delta \Psi(y)+
 \sum_{i=1}^3\frac{\partial }{\partial
 y_i}\left[\frac{1}{a(y)}\right]
 \frac{\partial V_\Delta \Psi(y)}{\partial y_i},&&
 \\
  {[}\Delta W\Phi](y)=
\left[\Delta\frac{1}{a(y)}\right]V_\Delta [a\Phi](y)+
 \sum_{i=1}^3\frac{\partial }{\partial
 y_i}\left[\frac{1}{a(y)}\right]
 \frac{\partial W_\Delta[a\Phi](y)}{\partial y_i},&&
\end{eqnarray*}
since $\Delta V_\Delta  \Psi(y)=\Delta W_\Delta
 [a\Phi](y)=0$ for $y\in \Omega$.

Due to the continuity of  operators \eqref{T3.1c}, \eqref{T3.1d},
this implies the operators
\begin{eqnarray*}
\Delta V &:& H^{s-\frac{3}{2}}(\pO) \to H^{s-1}( {\Omega}) ,
\\ 
\Delta W  &:&  H^{s-\ha}(\pO)\to H^{s-1}(\Omega)
\end{eqnarray*}
are continuous for $s\in\R$. Since $H^{s-1}(\Omega)\subset
\s{H}^{-\ha}(\Omega)$ for $s> \ha$, this implies \eqref{VHs1Ga},
\eqref{WHs1Ga} and completes the theorem.
\end{proof} 
} 
\begin{theorem}\label{T3.3}
Let $s\in \R$. The following pseudodifferential operators are
continuous
\begin{eqnarray*}
{\cal V}            &:& H^{s}(\pO)\to H^{s+1}(\pO),    \\
{\cal W},\;{\Wp} &:&  H^{s}(\pO)\to H^{s+1}(\pO), \\
{\cal L}^{+ }    &:& H^{s}(\pO)\to H^{s-1}(\pO).
\end{eqnarray*}
\end{theorem}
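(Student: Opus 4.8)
The plan is to reduce all four mapping statements to the classical mapping properties of the boundary integral operators associated with the Laplace operator (the case $a=1$), via the representation formulas \eqref{VWab1}--\eqref{VWab4}, which express each variable-coefficient operator through its $a=1$ counterpart and multiplication by fixed smooth functions.

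First I would recall the well-known fact that, on the $C^\infty$ closed surface $\pO$, the single layer operator ${\cal V}_\Delta$, the double layer operator ${\cal W}_\Delta$ and its transpose ${\cal W}'_\Delta$ are classical pseudodifferential operators of order $-1$, while the hypersingular operator ${\cal L}_\Delta$ (the conormal derivative on $\pO$ of the Laplace double layer potential) is a classical pseudodifferential operator of order $+1$. Consequently the operators
\begin{align*}
{\cal V}_\Delta,\ {\cal W}_\Delta,\ {\cal W}'_\Delta &:\ H^{s}(\pO)\to H^{s+1}(\pO),\\
{\cal L}_\Delta &:\ H^{s}(\pO)\to H^{s-1}(\pO)
\end{align*}
are continuous for every $s\in\R$; see e.g.\ \cite{Costabel1988, DaLi4, Hsiao-Wendland2008, McLean2000}. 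Next I would invoke that, since $a\in C^\infty(\overline\Omega)$ with $a>0$, the restrictions to $\pO$ of $a$, $1/a$ and $a\frac{\partial}{\partial n}\big(\frac1a\big)$ all belong to $C^\infty(\pO)$, and that multiplication by a function in $C^\infty(\pO)$ is a continuous map of $H^{s}(\pO)$ into itself for every $s\in\R$.

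The proof is then completed by composition. By \eqref{VWab1}, ${\cal V}g=\frac1a{\cal V}_\Delta g$ and ${\cal W}g=\frac1a{\cal W}_\Delta(ag)$, so both ${\cal V}$ and ${\cal W}$ map $H^{s}(\pO)$ continuously into $H^{s+1}(\pO)$; by \eqref{VWab3}, ${\cal W}'g = {\cal W}'_\Delta g + \big[a\frac{\partial}{\partial n}\big(\frac1a\big)\big]{\cal V}_\Delta g$, again a continuous map $H^{s}(\pO)\to H^{s+1}(\pO)$. For ${\cal L}^+$ I would use \eqref{VWab4}, ${\cal L}^+g = {\cal L}_\Delta(ag) + \big[a\frac{\partial}{\partial n}\big(\frac1a\big)\big]W^+_\Delta(ag)$, and observe that, by the jump relation \eqref{3.9} specialised to $a=1$, the operator $W^+_\Delta = -\ha I + {\cal W}_\Delta$ has order $0$; hence the first term maps $H^{s}(\pO)$ into $H^{s-1}(\pO)$, the second maps $H^{s}(\pO)$ into $H^{s}(\pO)\subset H^{s-1}(\pO)$, and so ${\cal L}^+:H^{s}(\pO)\to H^{s-1}(\pO)$ is continuous, for every $s\in\R$.

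The one point that is not completely routine --- which I would flag as the main (though mild) obstacle --- is that the identities \eqref{VWab1}--\eqref{VWab4}, derived in \cite{CMN-1,CMN-2} under restrictions on $s$ that ensure the pointwise or trace definitions of the variable-coefficient operators make literal sense, persist on the entire Sobolev scale. I would settle this by a density argument: the formulas hold on a dense subset (e.g.\ smooth densities), and each operator appearing on either side extends continuously to the spaces claimed above, so the identities carry over and may be taken as the definitions of ${\cal V}, {\cal W}, {\cal W}', {\cal L}^+$ on $H^{s}(\pO)$ for all $s\in\R$.
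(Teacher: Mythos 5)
Your proposal is correct and follows exactly the route the paper indicates: the paper states Theorem \ref{T3.3} without proof, noting that the results are obtained by employing relations \eqref{VWab1}--\eqref{VWab4} to transfer the classical mapping properties of the constant-coefficient (Laplace) boundary operators, as done in \cite{CMN-1,CMN-2}. Your argument, including the order count for ${\cal L}_\Delta$ and the treatment of the zero-order term $W^+_\Delta=-\ha I+{\cal W}_\Delta$, is a faithful write-out of that same reduction.
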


\begin{theorem}\label{T3.4}
Let $s\in \R$. The operators
\begin{eqnarray*}
r_{_{S_2}}\; {\cal V}\; : \;
H^{s}(\pO)\to H^{s}(\pO), && \\
r_{_{S_2}}\; {\cal W}\; : \;
H^{s}(\pO)\to H^{s}(\pO), && \\
r_{_{S_2}}\; {\Wp} \; :\;   H^{s}(\pO)\to H^{s}(\pO) &&
\end{eqnarray*}
are compact.
\end{theorem}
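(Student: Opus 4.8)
The plan is to derive the compactness directly from the order-raising mapping properties already recorded in Theorem~\ref{T3.3}, combined with the compactness of the Sobolev embedding on the compact closed manifold $\pO$.

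First I would recall from Theorem~\ref{T3.3} that each of the boundary pseudodifferential operators $\V$, $\W$, $\Wp$ maps $H^{s}(\pO)$ continuously into $H^{s+1}(\pO)$ for every $s\in\R$, i.e.\ each of them gains exactly one order of Sobolev smoothness. Since $\pO$ is a bounded, simply connected, infinitely smooth closed surface, it is a compact smooth manifold without boundary, and hence the embedding $j\colon H^{s+1}(\pO)\hookrightarrow H^{s}(\pO)$ is compact (Rellich--Kondrachov). Moreover the restriction operator $r_{S_2}$ acts boundedly on the relevant Sobolev scale. Each operator appearing in the statement can therefore be written as a composition
\begin{equation*}
r_{S_2}\, j\, \mathcal K,\qquad \mathcal K\in\{\V,\W,\Wp\}\colon H^{s}(\pO)\to H^{s+1}(\pO),
\end{equation*}
that is, as a bounded operator composed on both sides of the compact operator $j$. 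Since the composition of bounded operators with a compact operator is compact, all three operators $r_{S_2}\V$, $r_{S_2}\W$, $r_{S_2}\Wp$ are compact (from $H^{s}(\pO)$ into $H^{s}(\pO)$), as claimed.

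An alternative route, perhaps more in the spirit of the other Appendix proofs, would be to use the representations \eqref{VWab1}--\eqref{VWab3} of $\V$, $\W$, $\Wp$ in terms of their constant-coefficient counterparts $\V_\Delta$, $\W_\Delta$, $\Wp_\Delta$ and of multiplication operators by the $C^\infty$ functions $a$, $1/a$, $a\,\partial_n(1/a)$: for $a$ constant the smoothing and compactness statements are classical, and multiplication by a smooth function preserves every Sobolev class on $\pO$, hence preserves compactness. Either way, the only point that requires any care is the bookkeeping of the precise Sobolev orders gained, which is already settled by Theorem~\ref{T3.3}; I do not expect a genuine obstacle here, the argument being essentially routine.
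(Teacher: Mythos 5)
Your argument is correct. Note that the paper states Theorem~\ref{T3.4} in the Appendix explicitly \emph{without proof} (the results there are quoted from \cite{CMN-1,CMN-2} for convenience), so there is no in-paper proof to compare against; your route --- the one-order smoothing of $\V$, $\W$, $\Wp$ from Theorem~\ref{T3.3} composed with the compact embedding $H^{s+1}(\pO)\hookrightarrow H^{s}(\pO)$ on the compact smooth closed surface $\pO$ --- is exactly the standard argument and is the same mechanism the paper itself uses for the analogous volume-operator compactness in Corollary~\ref{B.3}. The only point worth a word of care is that the Rellich-type compactness of $H^{s+1}(\pO)\hookrightarrow H^{s}(\pO)$ is needed for \emph{all} real $s$, including negative orders, which holds on a compact manifold without boundary by duality/interpolation; your alternative route via \eqref{VWab1}--\eqref{VWab3} and the constant-coefficient operators is likewise sound and is precisely how \cite{CMN-1,CMN-2} transfer these properties to variable $a$.
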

\begin{theorem}\label{T3.6}
The operator
\begin{equation*}
 {\cal V}  \;:\;
H^{s-1}(\pO)\to H^{s}(\pO)
\end{equation*}
is continuously invertible for all $s\in\R$.
\end{theorem}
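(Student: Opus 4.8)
The plan is to reduce the assertion to the classical invertibility of the Laplace single layer boundary operator ${\cal V}_\Delta$. By the first relation in \eqref{VWab1}, we have ${\cal V}g=\frac{1}{a}\,{\cal V}_\Delta g$ on $\pO$; that is, ${\cal V}$ is the composition of ${\cal V}_\Delta$ with the operator of multiplication by the boundary trace $\gamma^+(1/a)$.

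First I would observe that, since $a\in C^\infty(\overline\Omega)$ and $a(x)>0$ on $\overline\Omega$, the function $\gamma^+(1/a)$ is an infinitely smooth, strictly positive function on the smooth compact manifold $\pO$. Multiplication by such a function is therefore a continuous isomorphism of $H^{s}(\pO)$ onto itself for every $s\in\R$, its inverse being multiplication by the (equally smooth) boundary trace $\gamma^+a$.

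Second, I would invoke the well-known fact (see e.g. \cite[Ch.~XI, Part~B, \S\,2, Theorem~3]{DaLi4}, \cite{Costabel1988, Hsiao-Wendland2008}) that ${\cal V}_\Delta:H^{s-1}(\pO)\to H^{s}(\pO)$ is a continuously invertible pseudodifferential operator of order $-1$ for all $s\in\R$: in the three-dimensional case ${\cal V}_\Delta$ is positive definite on $H^{-1/2}(\pO)$, which yields invertibility for $s=\ha$ via the Lax--Milgram lemma, while the Fredholm property (ellipticity) together with elliptic regularity propagates triviality of the kernel and cokernel to every Sobolev scale, giving invertibility for general $s$.

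Finally, composing, ${\cal V}=\bigl(\text{multiplication by }\gamma^+(1/a)\bigr)\circ{\cal V}_\Delta:H^{s-1}(\pO)\to H^{s}(\pO)$ is a composition of continuous isomorphisms, hence a continuous isomorphism itself, with inverse ${\cal V}^{-1}={\cal V}_\Delta^{-1}\circ\bigl(\text{multiplication by }\gamma^+a\bigr)$; continuity of ${\cal V}$ in the stated spaces has already been recorded in Theorem~\ref{T3.3}. There is essentially no obstacle here beyond accurately citing the classical invertibility of ${\cal V}_\Delta$ and noting that multiplication by a smooth strictly positive function leaves every Sobolev scale on $\pO$ invariant.
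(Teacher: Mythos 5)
Your argument is correct and coincides with the route the paper itself takes: Theorem~\ref{T3.6} is stated in the Appendix without proof precisely because it follows from the constant-coefficient invertibility of ${\cal V}_\Delta$ (cf.\ \cite[Ch.~XI, Part~B, \S\,2, Theorem~3]{DaLi4}) combined with relation \eqref{VWab1}, i.e.\ ${\cal V}=\frac{1}{a}{\cal V}_\Delta$ with $1/a$ smooth and strictly positive, exactly as you write. Nothing is missing; your remark that positive definiteness of ${\cal V}_\Delta$ on $H^{-1/2}(\pO)$ is what rules out the two-dimensional capacity obstruction is a correct and worthwhile clarification.
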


\begin{corollary} The operators \label{B.3}
\begin{eqnarray}
\label{4.29}
 {\cal R}  &:& H^s(\Omega)\to H^{s}(\Omega),\quad s>-\ha,\\
\label{4.29d}
     &:& H^s(\Omega)\to H^{s,-\ha}(\Omega;L),\quad s>\ha,\\
\label{4.30}
\gamma^+ {\cal R}  &:& H^s(\Omega)\to H^{s-\ha}(\pO), \quad s>-\ha,\\
\label{4.31}
 T^+{\cal R} &:& H^s(\Omega)\to
 H^{s-\frac{3}{2}}(\pO), \quad s>\ha,
\end{eqnarray}
are compact for any infinitely smooth boundary curve $\pO$.
\end{corollary}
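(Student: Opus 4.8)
The plan is to write every operator in \eqref{4.29}--\eqref{4.31} as a continuous operator into a space that embeds compactly into the target, the continuity being already furnished by Theorem~\ref{T3.1P} and Corollary~\ref{T3.1P0s}, and the compactness of the embedding coming from the Rellich--Kondrachov theorem on the bounded domain $\Omega$ and on the compact closed surface $\pO$.

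For \eqref{4.29}, \eqref{4.30}, \eqref{4.31} this is immediate: by \eqref{T3.1P4}, ${\cal R}:H^s(\Omega)\to H^{s+1}(\Omega)$ is bounded for $s>-\ha$ and $H^{s+1}(\Omega)\hookrightarrow H^s(\Omega)$ is compact; by \eqref{T3.1P4+}, $\gamma^+{\cal R}:H^s(\Omega)\to H^{s+\ha}(\pO)$ is bounded for $s>-\ha$ and $H^{s+\ha}(\pO)\hookrightarrow H^{s-\ha}(\pO)$ is compact since $\pO$ is a compact closed manifold; by \eqref{T3.1P4T+}, $T^+{\cal R}:H^s(\Omega)\to H^{s-\ha}(\pO)$ is bounded for $s>\ha$ and $H^{s-\ha}(\pO)\hookrightarrow H^{s-\tha}(\pO)$ is compact. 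Composing the compact embedding with the bounded operator yields the asserted compactness in each case.

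The only case requiring a little more is \eqref{4.29d}. By \eqref{T3.1P3ha}, ${\cal R}:H^s(\Omega)\to H^{s+1,-\ha}(\Omega;L)$ is continuous for $s>\ha$, so it would suffice to prove that the embedding $H^{s+1,-\ha}(\Omega;L)\hookrightarrow H^{s,-\ha}(\Omega;L)$ is compact. By Definition~\ref{Hst} a bounded sequence $\{g_n\}$ in $H^{s+1,-\ha}(\Omega;L)$ is bounded in $H^{s+1}(\Omega)$, with $\widetilde f_{g_n}=\widetilde L g_n$ bounded in $\widetilde H^{-\ha}(\Omega)$, where $\widetilde f_{g_n}$ is the unique extension of $Lg_n|_\Omega$ to $\widetilde H^{-\ha}(\Omega)$. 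Rellich--Kondrachov gives a subsequence converging in $H^s(\Omega)$, which handles the first summand of the $H^{s,-\ha}(\Omega;L)$--norm. For the second summand I would argue that $g\mapsto\widetilde f_g$ in fact gains regularity: $Lg_n|_\Omega\in H^{s-1}(\Omega)$ (the coefficients of $L$ being smooth), and, since $s-1>-\ha$, one can fix $t_0$ with $-\ha<t_0\le s-1$ and $|t_0|<\ha$; then $\widetilde E(Lg_n|_\Omega)\in\widetilde H^{t_0}(\Omega)\subset\widetilde H^{-\ha}(\Omega)$ is an extension of $Lg_n|_\Omega$, so by uniqueness of extensions into $\widetilde H^{-\ha}(\Omega)$ (any $H^t(\R^3)$-distribution supported on $\pO$ with $t\ge-\ha$ being zero; see \cite[Lemma 3.39]{McLean2000}, \cite[Theorem 2.10]{MikJMAA2011}) one has $\widetilde f_{g_n}=\widetilde E(Lg_n|_\Omega)$. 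Hence $\{\widetilde f_{g_n}\}$ is bounded in $\widetilde H^{t_0}(\Omega)$, which embeds compactly into $\widetilde H^{-\ha}(\Omega)$ (again since $t_0>-\ha$ and $\Omega$ is bounded), so a further subsequence converges there; the two convergences together give convergence in $H^{s,-\ha}(\Omega;L)$, proving compactness of the embedding and hence \eqref{4.29d}.

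I expect the last paragraph to be the only non-routine step, the delicate points being the identification $\widetilde f_g=\widetilde E(Lg|_\Omega)$ through uniqueness of extensions, the bookkeeping of the admissible exponent $t_0$ (so that both $\widetilde E$ is bounded on $H^{t_0}(\Omega)$ and $\widetilde H^{t_0}(\Omega)\hookrightarrow\widetilde H^{-\ha}(\Omega)$ is compact), and---if one prefers to take $L=A$ rather than $L=\Delta$---the remark that $Ag-a\Delta g=\nabla a\cdot\nabla g\in H^s(\Omega)$, so that the same reasoning applies verbatim; everything else reduces to the continuity statements of Theorem~\ref{T3.1P} and Corollary~\ref{T3.1P0s} together with the standard Rellich--Kondrachov compactness on $\Omega$ and on $\pO$.
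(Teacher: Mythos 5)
Your proposal is correct and follows essentially the same route as the paper: the paper's proof likewise obtains \eqref{4.29}, \eqref{4.30} and \eqref{4.31} by composing the one-order-smoothing continuity statements \eqref{T3.1P4}, \eqref{T3.1P4+}, \eqref{T3.1P4T+} with the Rellich compact embedding, and then deduces \eqref{4.29d} from \eqref{4.29} together with \eqref{T3.1P4}. Your last paragraph merely spells out in detail (the identification $\tilde f_g=\widetilde E(Lg|_\Omega)$ via uniqueness of extensions and the choice of $t_0$) what the paper's one-sentence treatment of \eqref{4.29d} leaves implicit.
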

\begin{proof} Compactness of the operators \eqref{4.29}, \eqref{4.30}
and \eqref{4.31} follow from \eqref{T3.1P4}, \eqref{T3.1P4+}, and
\eqref{T3.1P4T+}, respectively, and the Rellich compact embedding
theorem. Then \eqref{4.29} and \eqref{T3.1P4} imply \eqref{4.29d}.
\end{proof} 

\section{Finite dimensional perturbation of operator equations}

Theorem~\ref{L1} below is implied by \cite[Lemma 2]{MikPert1999}  (see also  \cite[\S 21]{Vain-Tren1974}, \cite[Section 21.4]{Trenogin1980}, where the particular
case, 
$ h^* _i(\stackrel{\circ }{x}_j)=\mathring x^*_{i}( h _j)= \delta _{ij}$, 
has been considered). Another approach, although with hypotheses similar to the ones in Theorem~\ref{L1}, is presented in \cite[Lemma 4.8.24]{Hackbush1995}.

\begin{theorem}\label{L1} 
Let $B_1$ and $B_2$ be two Banach spaces.
Let
$\underline{A}:B_1\rightarrow B_2$ be a linear continuous Fredholm operator with zero index,
$\underline{A}^*:B_2^*\rightarrow B_1^*$ be the operator adjoined to it,
and
$\dim \ker \underline{A}=\dim \ker\underline{A}^*=n<\infty$,  
where
$\ker\underline{A}=\mathrm{span}\{\mathring{x}_i\}_{i=1}^n$ $\subset B_1$,
$\ker\underline{A}^*=\mathrm{span}\{\mathring x^*_{i}\}_{i=1}^n\subset B_2^*$. 
Let 
$$
\underline{A}_1x:=\sum_{i=1}^k h _i h^* _i(x),
$$
where
$ h^* _i$, $ h _i \, (i=1,..., n)$
are  elements from $B^*_1$ and $B_2$, respectively, such that
\begin{equation}\label{2.3}
\det [ h^* _i
(\mathring{x}_j)]\not= 0,
\qquad
\det [\mathring x^*_{i}(h _j)]\not= 0 \qquad i,j=1,..., n.
\end{equation}
Then:

(i) the operator $\underline{A}-\underline{A}_1:B_1\rightarrow B_2$ is continuous and continuously invertable;

(ii) if $y\in  B_2$  satisfies the solvability  conditions, 
\begin{equation}
\label{1.4}
\mathring x^*_{i}(y)=0, \qquad i=1,..., n,
\end{equation}
of equation 
\begin{equation}
\label{1.1C}
\underline{A}x=y,
\end{equation} 
then the unique solution $x$ of equation 
\begin{equation}\label{2.1C}
(\underline{A}-\underline{A}_1)x=y, 
\end{equation}  is a
solution of equation (\ref{1.1C}) such that\begin{equation}\label{2.4} h^*
_i(x) = 0\qquad (i=1,..., k).\end{equation}

(iii) Vice versa, if $x$ is a solution of equation (\ref{2.1C}) satisfying
conditions (\ref{2.4}), then conditions (\ref{1.4})
are satisfied for the right-hand side $y$ of equation (\ref{2.1C})
and $x$ is a solution of equation (\ref{1.1C}) with the same right-hand
side $y$.
\end{theorem}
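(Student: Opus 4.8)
The plan is to reduce everything to two facts: the injectivity of $\underline{A}-\underline{A}_1$, which I will extract directly from the two nondegeneracy conditions \eqref{2.3}, and the stability of the Fredholm index under a finite-rank perturbation, which will upgrade injectivity to continuous invertibility. Throughout I identify $k=n$, so that $\underline{A}_1x=\sum_{i=1}^n h_ih^*_i(x)$ is a rank-$n$ operator, and I use the standard Fredholm identity $\mathrm{Ran}(\underline{A})=\{y\in B_2:\ \mathring x^*_i(y)=0,\ i=1,\dots,n\}$, valid because the range of a Fredholm operator is closed and its annihilator in $B_2^*$ is exactly $\ker\underline{A}^*=\mathrm{span}\{\mathring x^*_i\}$.

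First I would prove injectivity. Suppose $(\underline{A}-\underline{A}_1)x=0$, that is $\underline{A}x=\underline{A}_1x$. Pairing with each $\mathring x^*_m\in\ker\underline{A}^*$ and using $\mathring x^*_m(\underline{A}x)=(\underline{A}^*\mathring x^*_m)(x)=0$ gives the homogeneous linear system $\sum_{i=1}^n\mathring x^*_m(h_i)\,h^*_i(x)=0$, $m=1,\dots,n$. The second determinant condition in \eqref{2.3} forces $h^*_i(x)=0$ for all $i$, whence $\underline{A}_1x=0$ and therefore $\underline{A}x=0$, so $x=\sum_{j=1}^n c_j\mathring x_j\in\ker\underline{A}$. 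Substituting this back into $h^*_i(x)=0$ yields $\sum_{j=1}^n c_j\,h^*_i(\mathring x_j)=0$, and the first determinant condition in \eqref{2.3} now forces $c_j=0$, so $x=0$. For part (i) it then remains to observe that $\underline{A}_1$, being of finite rank, is compact, so $\underline{A}-\underline{A}_1$ is Fredholm with the same zero index as $\underline{A}$. An injective index-zero Fredholm operator is bijective, and the bounded inverse theorem (the spaces being Banach) supplies continuity of $(\underline{A}-\underline{A}_1)^{-1}$; this proves (i).

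Parts (ii) and (iii) are then read off by the same pairing. For (ii), let $y$ satisfy \eqref{1.4} and let $x$ be the unique solution of \eqref{2.1C}. Pairing \eqref{2.1C} with $\mathring x^*_m$ and again using $\mathring x^*_m(\underline{A}x)=0$ together with $\mathring x^*_m(y)=0$ gives $\sum_i\mathring x^*_m(h_i)h^*_i(x)=0$, so \eqref{2.3} yields $h^*_i(x)=0$, which is \eqref{2.4}; consequently $\underline{A}_1x=0$ and $\underline{A}x=(\underline{A}-\underline{A}_1)x=y$, so $x$ solves \eqref{1.1C}. For (iii), if $x$ solves \eqref{2.1C} and satisfies \eqref{2.4}, then $\underline{A}_1x=0$ immediately gives $\underline{A}x=y$, hence $y\in\mathrm{Ran}(\underline{A})$ and the range characterization forces $\mathring x^*_i(y)=(\underline{A}^*\mathring x^*_i)(x)=0$, which is \eqref{1.4}.

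The only genuinely non-elementary input is the closed-range and annihilator identity for Fredholm operators; the remaining content is the bookkeeping that assigns the second condition in \eqref{2.3} the role of restoring surjectivity along the cokernel directions $h_i$ and the first condition the role of killing the kernel directions $\mathring x_j$. I expect keeping those two roles cleanly separated to be the only place where care is genuinely needed, since everything else is linear algebra on $n\times n$ systems combined with the index-stability of compact perturbations.
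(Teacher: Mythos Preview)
Your argument is correct. Injectivity is obtained exactly as it should be: pairing with the cokernel functionals $\mathring x^*_m$ kills $\underline{A}x$ and, via the second determinant in \eqref{2.3}, forces $h^*_i(x)=0$; this drops you into $\ker\underline{A}$, where the first determinant finishes the job. The upgrade to continuous invertibility via index stability under the finite-rank (hence compact) perturbation $\underline{A}_1$ and the open mapping theorem is the standard and correct route. Parts (ii) and (iii) follow by the same pairing, and your use of the closed-range/annihilator characterisation of $\mathrm{Ran}(\underline{A})$ is legitimate for a Fredholm operator.

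Regarding comparison with the paper: the paper does not supply its own proof of this theorem. It merely states the result and refers to \cite[Lemma~2]{MikPert1999} (with the special case $h^*_i(\mathring{x}_j)=\mathring x^*_i(h_j)=\delta_{ij}$ attributed to \cite{Vain-Tren1974,Trenogin1980}, and an alternative approach to \cite{Hackbush1995}). Your self-contained argument is therefore more than what the paper offers, and it is in line with how such results are proved in those references. Your explicit identification $k=n$ is also appropriate; the discrepancy between the upper index $k$ in the definition of $\underline{A}_1$ and the range $i=1,\dots,n$ stated immediately afterwards is a typographical inconsistency in the theorem statement, and the intended reading is indeed $k=n$.
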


Note that more results about finite-dimensional operator perturbations are available in \cite{MikPert1999}.

\section*{Concluding remarks}

The Dirichlet and Neumann problems for a variable--coefficient PDE with  general
right-hand side functions from ${H}^{-1}(\Omega)$ and $\widetilde{H}^{-1}(\Omega)$, respectively, were equivalently reduced to two direct segregated
boundary-domain integral equation systems, for each of the BVPs. 
This involved systematic use of the generalised co-normal derivatives without assumption that thy reduce to classical or canonical co-normal derivatives. 
The operators associated with
the left-hand sides of all the BDIE systems were analysed  in corresponding
Sobolev spaces. It was shown that the operators of the BDIE systems for the Dirichlet problem are continuous and continuously invertible. For the Neumann problem the BDIE system operators are continuous but only Fredholm with zero index,  their kernels and co-kernels were analysed, and appropriate finite-dimensional perturbations were constructed to make the perturbed operators invertible and provide a solution of the original BDIE systems and the Neumann problem. A further analysis of spectral properties of the two second kind
equations obtained in the paper is needed to decide whether the
resolvent theory and the Neumann series method (cf. \cite{MikMatSb1983,
SW2001} and references therein) are efficient for solving the equations.

The same approach can be used to extend, to the general PDE right hand sides, the BDIE systems for the mixed problems, 
unbounded domains, BDIEs of more general scalar PDEs and the systems of PDEs, as well as to the united and localised BDIEs, for which the analysis is now available for the right hand sides only from $L^2(\Omega)$, see \cite{CMN-1}%
\nocite{CMN-LocJIEA2009, CMN-NMPDE-crack, CMN-MDEMP2011, CMN-Ext-AA2013, CMN-IEOT2013}%
--\cite{CMN-SysDir2015+}, \cite{MikMMAS2006}, \cite{AyeleMik-EMJ2011}, \cite{DufMikIMSE2015}, \cite{MikPorIMSE2015}, \cite{MikPorUKBIM2015}.
The conditions on smoothness of the variable coefficients and the boundary can
be also essentially relaxed.
\\

\noindent
{\bf Acknowledgement}\\
This research was supported by the  grants
EP/H020497/1: "Mathematical Analysis of Localized Boundary-Domain Integral Equations
 for Variable-Coefficient Boundary Value Problems" 
and
EP/M013545/1: "Mathematical Analysis of Boundary-Domain Integral Equations for Nonlinear PDEs"  
 from the EPSRC, UK.



\end{document}